\newtheorem{definition}{Definition}[section]
\newtheorem{lemma}[definition]{Lemma}
\newtheorem{theorem}[definition]{Theorem}
\newtheorem{corollary}[definition]{Corollary}
\newtheorem{proposition}[definition]{Proposition}
\newtheorem{remark}[definition]{Remark}
\DeclareMathOperator{\divop}{div}
\DeclareMathOperator{\supp}{supp}
\DeclareMathOperator{\sgn}{sgn}
\DeclareMathOperator{\esssup}{ess~sup}
\DeclareMathOperator{\blambda}{\boldsymbol{\lambda}}
\DeclareMathOperator{\bsigma}{\boldsymbol{\sigma}}
\DeclareMathOperator{\bmu}{\boldsymbol{\mu}}
\DeclareMathOperator{\bF}{\boldsymbol{F}}
\DeclareMathOperator{\bV}{\boldsymbol{V}}
\DeclareMathOperator{\alphalpha}{\beta}
\DeclareMathOperator{\betabeta}{\alpha}
\numberwithin{equation}{section}
\title[]{Uniform contractivity of the Fisher infinitesimal model\\ with strongly convex selection}
\author[Vincent Calvez]{Vincent Calvez}
\address{Institut Camille Jordan (ICJ), UMR 5208 CNRS \& Universit\'e Claude Bernard Lyon 1, 69100 Villeurbanne, France}
\email{vincent.calvez@math.cnrs.fr}
\author[David Poyato]{David Poyato}
\address{Departamento de Matem\'atica Aplicada and Research Unit ``Modeling Nature'' (MNat), Facultad de Ciencias, Universidad de Granada, 18071 Granada, Spain}
\email{davidpoyato@ugr.es}
\author[Filippo Santambrogio]{Filippo Santambrogio}
\address{Institut Camille Jordan (ICJ), UMR 5208 CNRS \& Universit\'e Claude Bernard Lyon 1, 69100 Villeurbanne, France}
\email{santambrogio@math.univ-lyon1.fr}
\begin{document}

\date{}

\subjclass[2020]{35B40; 35P30; 35Q92; 47G20; 92D15} 
\keywords{Integro-differential equations, asymptotic behavior, nonlinear spectral theory, quantitative genetics, Monge-Amp\`ere equation, maximum principle}

\thanks{\textbf{Acknowledgment.} The authors are indebted to Thomas Lepoutre for stimulating discussions and valuable suggestions on the paper's presentation. We also thank Laurent Lafleche for pointing us the identity in Corollary \ref{C-nonlinear-dualities}. VC and DP have received funding from the European Research Council (ERC) under the European Union’s Horizon 2020 research and innovation program (grant agreement No 865711). DP has received founding from the European Union’s Horizon Europe research and innovation program under the Marie Sk\l odowska-Curie grant agreement No 101064402, and partially from the MINECO-Feder (Spain) project P18-RT-2422. FS acknowledges the support of the Lagrange Mathematics and Computation Research Center project on Optimal Transportation.}

\begin{abstract}
The Fisher infinitesimal model is a classical model of phenotypic trait inheritance in quantitative genetics. Here, we prove that it encompasses a remarkable convexity structure which is compatible with a selection function having a convex shape. It yields uniform contractivity along the flow, as measured by a $L^\infty$ version of the Fisher information. It induces in turn asynchronous exponential growth of solutions, associated with a well-defined, log-concave, equilibrium distribution. Although the equation is non-linear and non-conservative, our result shares some similarities with the Bakry-Emery approach to the exponential convergence of solutions to the Fokker-Planck equation with a convex potential. Indeed, the contraction takes place at the level of the Fisher information. Moreover, the key lemma for proving contraction involves the Wasserstein distance $W_\infty$ between two probability distributions of a (dual) backward-in-time process, and it is inspired by a maximum principle by Caffarelli for the Monge-Amp\`ere equation.
\end{abstract}

\maketitle

\section{Introduction}\label{S-introduction}

Let us consider the following nonlinear model
\begin{equation}\label{E-nonlinear}
F_n=\mathcal{T}[F_{n-1}],\quad n\in \mathbb{N},\,x\in \mathbb{R},
\end{equation}
describing the evolution of the distribution $F_n=F_n(x)$ of a one-dimensional trait $x\in \mathbb{R}$, subject to sexual reproduction and the effect of selection at each generation. The operator $\mathcal{T}$ above is defined by
\begin{align}
\mathcal{T}[F](x)&:=e^{-m(x)}\mathcal{B}[F](x), \quad x\in \mathbb{R},\label{E-operator-T}\\
\mathcal{B}[F](x)&:=\iint_{\mathbb{R}^2} G\left(x-\frac{x_1+x_2}{2}\right)\,F(x_1)\,\frac{F(x_2)}{\Vert F\Vert_{L^1}}\,dx_1\,dx_2, \quad x\in\mathbb{R},\label{E-operator-B}
\end{align}
for any $F\in L^1_+(\mathbb{R})\setminus\{0\}$. On the one hand, the operator $\mathcal{B}$ describes the distribution of traits of descendants of the previous generation $F_{n-1}$, arising as recombination of parental traits in agreement with {\it Fisher's infinitesimal model}, which is a classical model in quantitative genetics \cite{BEV-17,F-18}. Accordingly, the mixing kernel $G$ is set to a centered Gaussian distribution with unit {\it segregation variance} without loss of generality, namely
\begin{equation}\label{E-G}
G(x):=\frac{1}{(2\pi)^{1/2}}e^{-\frac{x^2}{2}},\quad x\in \mathbb{R}.
\end{equation}
On the other hand, the trait-dependent mortality function $m=m(x)\geq 0$ represents the effect of selection on the population, which acts multiplicatively over the descendants. In other words, the multiplicative factor $e^{-m(x)}$ in \eqref{E-operator-T} represents the survival probability to the next generation of individuals having the trait $x$. We note that the time-discrete generations $n\in \mathbb{N}$ are assumed non-overlapping since, altogether, $F_n$ describes the distribution of those offspring of $F_{n-1}$ having survived after the selection step, and then different generations do not get mixed, see \cite{CLP-21-arxiv} for further insight.

The goal of this paper is to extend the studies initiated in \cite{CLP-21-arxiv} to a broader class of selection functions. Specifically, when $m$ is a strongly convex function we prove {\it asynchronous exponential growth} in \eqref{E-nonlinear}. In other words, we derive quantitative rates for the relaxation of the solutions $\{F_n\}_{n\in \mathbb{N}}$ of \eqref{E-nonlinear} to a strongly log-concave quasi-equilibrium of the form $\blambda^n \bF$, where $\blambda>0$ and $\bF\in L^1(\mathbb{R})\cap \mathcal{P}(\mathbb{R})$ is an appropriate probability density. We remark that in order for an anstaz $F_n(x)=\lambda^n F(x)$ to define a solution to \eqref{E-nonlinear}, we need that the pair $(\lambda,F)$ solves the following nonlinear eigenproblem:
\begin{align}\label{E-nonlinear-eigenproblem}
\begin{aligned}
&\lambda F=\mathcal{T}[F],\quad x\in \mathbb{R},\\
& F\geq 0,\quad \int_{\mathbb{R}} F(x)\,dx=1.
\end{aligned}
\end{align}
Hence, the possible quasi-equilibria are to be found as solutions to \eqref{E-nonlinear-eigenproblem}. Note that contrarily to the special quadratic regime treated in \cite{CLP-21-arxiv}, the Gaussian structure cannot be longer exploited and, in particular, the existence of solutions to \eqref{E-nonlinear-eigenproblem} is unclear. Indeed, the above non-linear integral operator is $1$-homogenenous but non-monotone, and therefore the Krein-Ruthman theorem \cite{M-07} cannot be applied as it has been done in other (usually linear) problems in population dynamics \cite{BCV-16,CLW-17}. Hence, the study of the non-linear evolution problem \eqref{E-nonlinear} and the non-linear eigenproblem \eqref{E-nonlinear-eigenproblem} requires innovative ideas.

Along this paper, we address jointly the following two problems: (i) Existence of a strongly log-concave solution $(\blambda,\bF)$ to \eqref{E-nonlinear-eigenproblem}, and (ii) Quantitative relaxation of the solutions to \eqref{E-nonlinear} towards the quasi-equilibrium $\blambda^n \bF$. 
We make the crucial hypothesis that $m$ is a strongly convex function,
\begin{equation}\label{H-main-m-1}\tag{H1}
m'' \geq \betabeta \quad \mbox{for some}\quad \betabeta>0,
\end{equation}
The function $m$ necessarily reaches its minimum value over $\mathbb{R}$. For convenience, we assume the following additional hypothesis without loss of generality, 
\begin{equation}\label{H-main-m-2}\tag{H2}
m \geq 0,\quad \mbox{and}\quad m(0)=0.
\end{equation}
The $L^\infty$ relative Fisher information $\mathcal{I}_\infty$ plays a pivotal role in our analysis, as it measures the contractivity along the flow (see methodological notes below). It is defined as follows, for a pair of functions $P,Q\in L^1_+(\mathbb{R})\cap C^1(\mathbb{R})$,
\begin{equation}\label{E-main-Fisher-information-infty}
\mathcal{I}_\infty(P\Vert Q):=\left\Vert \dfrac{d}{dx}\left(\log \frac{P}{Q}\right)\right\Vert_{L^\infty}.
\end{equation}

\begin{theorem}\label{T-main}
Assume \eqref{H-main-m-1}-\eqref{H-main-m-2}.
Then, the following statements hold true:
\begin{enumerate}[label=(\roman*)]
\item {\rm{(\bf Existence of quasi-equilibrium})}\\
There is at least one solution $(\blambda,\bF)$ to \eqref{E-nonlinear-eigenproblem}. In addition, 
$\bF=e^{-\bV}\in L^1_+(\mathbb{R})\cap C^\infty(\mathbb{R})$ is $\alphalpha$-log-concave, where $\alphalpha>\frac12$ is uniquely defined by the following relationship
\begin{equation}\label{E-main-alpha}
\alphalpha=\betabeta+\frac{2\alphalpha}{1+2\alphalpha}.
\end{equation}
Moreover, $(\blambda,\bF)$ is the unique solution to \eqref{E-nonlinear-eigenproblem} among all pairs $(\lambda,F)$ such that
\begin{equation}\label{E-main-F-uniqueness}
\dfrac{d}{dx}\left(\log \frac{F}{\bF}\right)\in L^\infty(\mathbb{R}).
\end{equation}
\item {\rm ({\bf One-step contraction})}\\
Consider any $F_0\in L^1_+(\mathbb{R})\cap C^1(\mathbb{R})$ such that 
\begin{equation}\label{H-main-F0}\tag{H3}
\dfrac{d}{dx} \left(\log \frac{F_0}{\bF}\right)\in L^\infty(\mathbb{R}),
\end{equation}
and let $\{F_n\}_{n\in \mathbb{N}}$ be the solution to \eqref{E-nonlinear} issued at $F_0$. Then, we have
\begin{equation}\label{E-main-one-step-contraction}
\mathcal{I}_\infty\left(\left. F_n \right\Vert \bF\right)\leq \frac{2}{1+2\alphalpha}\,\mathcal{I}_\infty\left(\left. F_{n-1} \right\Vert \bF\right),
\end{equation}
for any $n\in \mathbb{N}$. 
\item {\rm ({\bf Asynchronous exponential growth})}\\
Consider any $F_0\in L^1_+(\mathbb{R})\cap C^1(\mathbb{R})$ verifying the assumption \eqref{H-main-F0} above, and let $\{F_n\}_{n\in \mathbb{N}}$ be the solution to \eqref{E-nonlinear} issued at $F_0$. Then, we have
\begin{align}\label{E-main-asynchronous-exponential-growth}
\left\vert\frac{\Vert F_n\Vert_{L^1}}{\Vert F_{n-1}\Vert_{L^1}}-\blambda\right\vert&\leq C\, \left(\frac{2}{1+2\alphalpha}\right)^n,\\
\mathcal{D}_{\rm KL}\left(\left.\frac{F_n}{\Vert F_n\Vert_{L^1}}\right\Vert\bF\right)&\leq C\, \left(\frac{2}{1+2\alphalpha}\right)^{2n},
\end{align}
for every $n\in \mathbb{N}$, where $C>0$ is a explicit constant depending on $F_0$, and $\mathcal{D}_{\rm KL}$ is the Kullback-Leibler divergence (or relative entropy), that is,
\begin{equation}\label{E-main-relative-entropy}
\mathcal{D}_{\rm KL}(P\Vert Q):=\int_{\mathbb{R}}\log\left (\frac{P(x)}{Q(x)}\right )P(x)\,dx,\quad P,Q \in L^1_+(\mathbb{R})\cap \mathcal{P}(\mathbb{R}).
\end{equation}
\end{enumerate}
\end{theorem}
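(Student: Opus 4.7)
My plan is to reduce all three claims to a single contraction estimate for the dual backward-in-time kernel associated with $\mathcal{B}$. First, I would set $U_n:=\log(F_n/\bF)$ and combine the identities $F_n=e^{-m}\mathcal{B}[F_{n-1}]$ and $\blambda\bF=e^{-m}\mathcal{B}[\bF]$: after taking logarithms and differentiating in $x$, both $m'(x)$ and the constant $\log\blambda$ drop out, giving
\begin{equation*}
U_n'(x)=(\log\mathcal{B}[F_{n-1}])'(x)-(\log\mathcal{B}[\bF])'(x).
\end{equation*}
Since $G$ is Gaussian with $G'(x)/G(x)=-x$, a short computation would yield
\begin{equation*}
(\log\mathcal{B}[F])'(x)=-x+\mathbb{E}_{\pi_x^F}[s],
\end{equation*}
where $\pi_x^F(ds)\propto G(x-s)P_F(s)\,ds$ is the dual backward posterior on the midpoint $s=(x_1+x_2)/2$ of a parental pair given an offspring at $x$, and $P_F(s):=\int F(s+h/2)F(s-h/2)\,dh$. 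Hence $U_n'(x)=\mathbb{E}_{\pi_x^{F_{n-1}}}[s]-\mathbb{E}_{\pi_x^{\bF}}[s]$, and contracting $\mathcal{I}_\infty$ reduces to comparing the means of two midpoint posteriors.

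The crux of my approach would then be to prove the key lemma
\begin{equation*}
\sup_x\bigl|\mathbb{E}_{\pi_x^F}[s]-\mathbb{E}_{\pi_x^{\bF}}[s]\bigr|\leq\frac{2}{1+2\alphalpha}\,\mathcal{I}_\infty(F\Vert\bF).
\end{equation*}
Two ingredients would enter. First, a Pr\'ekopa-Leindler / Schur-complement computation on the joint density $(s,h)\mapsto\bF(s+h/2)\bF(s-h/2)$ shows that $P_\bF$ is at least $2\alphalpha$-log-concave, so convolving with $G$ (of precision one) makes $\pi_x^{\bF}$ at least $(1+2\alphalpha)$-log-concave uniformly in $x$. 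Second, $\pi_x^F$ is a log-density perturbation of $\pi_x^{\bF}$ by $\log(P_F/P_\bF)(s)$ (up to a constant), whose derivative in $s$ can be written as a conditional expectation of $U'(X_1)+U'(X_2)$ given the midpoint, heuristically of order $2\|U'\|_{L^\infty}$. I would then invoke a one-dimensional Caffarelli-type maximum principle: the $(1+2\alphalpha)$-log-concavity of the target controls the optimal-transport displacement in $W_\infty$ between $\pi_x^F$ and $\pi_x^{\bF}$ by the sharp ratio $\frac{2}{1+2\alphalpha}\|U'\|_{L^\infty}$, and since $s\mapsto s$ is $1$-Lipschitz the bound on the means follows. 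This is where I expect the main obstacle to lie: extracting the sharp prefactor $\frac{2}{1+2\alphalpha}$ rather than a weaker $1/\alphalpha$ or $1/\sqrt{1+2\alphalpha}$ seems to require the $W_\infty$ maximum principle on the backward process, and not merely Bakry-Emery or Brascamp-Lieb inequalities.

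With the key lemma at hand, part (ii) would be immediate. For part (iii), iteration yields $\|U_n'\|_{L^\infty}\leq r^n\|U_0'\|_{L^\infty}$ with $r:=2/(1+2\alphalpha)<1$ since $\alphalpha>1/2$. Normalizing $\tilde F_n:=F_n/\Vert F_n\Vert_{L^1}$ does not change $U_n'$, and the Bakry-Emery logarithmic Sobolev inequality for the $\alphalpha$-log-concave reference $\bF$ gives
\begin{equation*}
\mathcal{D}_{\rm KL}(\tilde F_n\Vert\bF)\leq\frac{1}{2\alphalpha}\,I(\tilde F_n\Vert\bF)\leq\frac{1}{2\alphalpha}\,\mathcal{I}_\infty(\tilde F_n\Vert\bF)^2,
\end{equation*}
which delivers the quadratic rate. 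For the eigenvalue ratio, Fubini yields $\Vert\mathcal{B}[F]\Vert_{L^1}=\Vert F\Vert_{L^1}$, so $\Vert F_n\Vert_{L^1}/\Vert F_{n-1}\Vert_{L^1}=\int e^{-m}\,\mathcal{B}[F_{n-1}]/\Vert F_{n-1}\Vert_{L^1}\,dx$ and $\blambda=\int e^{-m}\,\mathcal{B}[\bF]\,dx$; applying the key lemma once more to compare the two normalized densities would yield the first rate in \eqref{E-main-asynchronous-exponential-growth}.

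For existence in part (i), I would avoid presuming $\bF$ and work directly with the normalized iteration $\mathcal{S}[F]:=\mathcal{T}[F]/\Vert\mathcal{T}[F]\Vert_{L^1}$ on the convex set $\mathcal{C}$ of $\alphalpha$-log-concave probability densities. A log-concavity computation shows $\mathcal{S}(\mathcal{C})\subset\mathcal{C}$: if $F$ is $\alphalpha$-log-concave then $P_F$ is $2\alphalpha$-log-concave, so $\mathcal{B}[F]$ is $\frac{2\alphalpha}{1+2\alphalpha}$-log-concave (Gaussian convolution), and multiplying by $e^{-m}$ adds $\betabeta$, recovering exactly $\alphalpha$ by \eqref{E-main-alpha}. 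The key lemma---being a statement about $\mathcal{T}$ rather than about any particular fixed point---would then give $\mathcal{I}_\infty(F_{n+1}\Vert F_n)\leq r\,\mathcal{I}_\infty(F_n\Vert F_{n-1})$, so the iterates from any admissible $F_0$ are Cauchy in $L^\infty$-Fisher information. The uniform $\alphalpha$-log-concavity together with the confinement produced by $m$ would deliver tightness, and passing to the limit would produce the fixed point $\bF$, whence $\blambda$. Uniqueness under \eqref{E-main-F-uniqueness} would follow from (ii) applied to two candidates: $\mathcal{I}_\infty(F_1\Vert F_2)\leq r\,\mathcal{I}_\infty(F_1\Vert F_2)$ forces equality up to normalization.
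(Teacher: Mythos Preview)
Your overall plan for (i) and (iii) is close to the paper's: the Cauchy-sequence construction of $\bF$ from $\alphalpha$-log-concave iterates and the log-Sobolev/Pinsker route to asynchronous exponential growth are essentially what the paper does (the paper inserts an intermediate truncation to $[-R,R]$ to secure regularity and compactness, and the tightness step---bounding the center of mass of the iterates---is less innocent than you suggest, but the skeleton is the same).

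The real divergence, and the real gap, is in the key lemma for (ii). You reduce to the one-dimensional midpoint posterior $\pi_x^F(ds)\propto G(x-s)P_F(s)\,ds$ and then try to compare $\pi_x^{F_{n-1}}$ with $\pi_x^{\bF}$ at the \emph{same} $x$. This forces you to control $\bigl\|\tfrac{d}{ds}\log(P_F/P_{\bF})\bigr\|_{L^\infty}$, and here your ``heuristic'' identification with a conditional expectation of $U'(X_1)+U'(X_2)$ is not correct beyond the Gaussian case. Writing $F=e^{U}\bF$, one finds
\[
\bigl(\log P_F/P_{\bF}\bigr)'(s)=\mathbb{E}_{\rho_s^F}\!\bigl[U'(s+\tfrac{H}{2})+U'(s-\tfrac{H}{2})\bigr]\;+\;\bigl(\mathbb{E}_{\rho_s^F}-\mathbb{E}_{\rho_s^{\bF}}\bigr)\!\bigl[-\bV'(s+\tfrac{H}{2})-\bV'(s-\tfrac{H}{2})\bigr],
\]
where $\rho_s^{\,\cdot}$ is the conditional law of $H=X_1-X_2$ given the midpoint. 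The first term is indeed bounded by $2\|U'\|_{L^\infty}$, but the second term does not vanish unless $\rho_s^{\bF}$ is independent of $s$ (the Gaussian case). It involves $\bV''$, which under (H1)--(H2) has no upper bound, so there is no obvious way to absorb it while keeping the sharp prefactor $2$. Your Caffarelli step would then yield at best $\frac{1}{1+2\alphalpha}\bigl(2\|U'\|_{L^\infty}+\text{extra}\bigr)$, and the contraction may be lost.

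The paper sidesteps this entirely by comparing in the \emph{other} variable: it keeps the full two-dimensional parental kernel $P(x;x_1,x_2)$ built from $\bF$ alone, and compares $P(x;\cdot)$ with $P(\tilde x;\cdot)$ at two different offspring traits. The log-densities then differ by $\tfrac12\bigl(x-\tfrac{x_1+x_2}{2}\bigr)^2-\tfrac12\bigl(\tilde x-\tfrac{x_1+x_2}{2}\bigr)^2$, whose gradient in $(x_1,x_2)$ is \emph{exactly} $\tfrac12(\tilde x-x)(1,1)$, with $\ell_1$ norm $|x-\tilde x|$. A Caffarelli-type maximum principle for the Monge--Amp\`ere equation, carried out in the $\ell_1$ norm on $\ell_\infty$ boxes (so that boundary maxima are harmless), then gives $W_{\infty,1}(P(x;\cdot),P(\tilde x;\cdot))\le\frac{2}{1+2\alphalpha}|x-\tilde x|$ with the sharp constant; a log-Kantorovich duality converts this into $|\log u_n(x)-\log u_n(\tilde x)|\le\|U_{n-1}'\|_{L^\infty}\,W_{\infty,1}$. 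The point is that varying $x$ produces an explicit, linear perturbation of the log-density, whereas varying $F$ (your route) produces an implicit perturbation whose Lipschitz constant you cannot pin down.
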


\begin{remark}[Case of quadratic selection]\label{R-quadratic-selection}
For quadratic selection $m(x)=\frac{\betabeta}{2}|x|^2$, we have that $m$ satisfies the hypothesis \eqref{H-main-m-1}-\eqref{H-main-m-2} in Theorem \ref{T-main}, and then our new result applies. Such a special case was studied in detail in \cite{CLP-21-arxiv}, where in particular it was proven that there is a unique eigenpair $(\blambda,\bF)$ of \eqref{E-nonlinear-eigenproblem}, which involves a Gaussian eigenfunction $\bF(x)=G_{0,\sigma^2}(x)$ with variance $\sigma^2>0$ verifying
\begin{equation}\label{E-quadratic-case-variance}
\frac{1}{\sigma^2}=\betabeta+\frac{1}{1+\frac{\sigma^2}{2}}.
\end{equation}
In particular, $\bF$ is $\frac{1}{\sigma^2}$-log-concave, which is compatible with our new result in view of the identity $\sigma^2=\alphalpha^{-1}$ stemming from equations \eqref{E-main-alpha} and \eqref{E-quadratic-case-variance}. Furthermore, the contraction factor in \eqref{E-main-one-step-contraction} predicted by Theorem \ref{T-main} also recovers the one obtained in \cite{CLP-21-arxiv} for quadratic selection. Specifically, one has
$$\frac{2}{1+2\alphalpha}=\frac{(3+2\betabeta)-\sqrt{(3+2\betabeta)^2-8}}{2},$$
which agrees precisely with the contraction factor found in \cite[Lemma 6.3]{CLP-21-arxiv}.
\end{remark}

\begin{remark}[Close-to-equilibrium initial data]
In contrast with \cite{CLP-21-arxiv}, where the above framework was restricted to $m(x)=\frac{\betabeta}{2}|x|^2$ but generic $F_0\in \mathcal{M}_+(\mathbb{R})$, Theorem \ref{T-main} applies to a broader class of selection functions verifying \eqref{H-main-m-1}-\eqref{H-main-m-2} at the cost of restricting to initial data fulfilling the hypothesis \eqref{H-main-F0}. Specifically, such a condition imposes a precise behavior of the tails of $F_0$, which must be very close to those of the eigenfunction $\bF$ (in particular, two Gaussian initial distributions should have the same variance). 
\end{remark}

\begin{remark}[Conditional uniqueness]
Another difference with \cite{CLP-21-arxiv} is that the current approach does not guarantee global uniqueness of solutions to the eigenproblem \eqref{E-nonlinear-eigenproblem}, but only within the class of eigenpairs verifying \eqref{E-main-F-uniqueness}. Nevertheless, we conjecture that global uniqueness holds true, as in the quadratic case $m(x)=\frac{\betabeta}{2}|x|^2$. Proving global uniqueness would require a careful control of the behavior at infinity, in the spirit of \cite{CLP-21-arxiv}, which is beyond the scope of this paper.
\end{remark}

\begin{remark}[Log-concavity and contraction factor]\label{R-parameters-behavior}
For any $\betabeta>0$, we have that the log-concavity parameter $\alphalpha$ in \eqref{E-main-alpha} and the corresponding contraction factor $\frac{2}{1+2\alphalpha}$ in \eqref{E-main-one-step-contraction} satisfy the following properties:
\begin{align*}
\betabeta\searrow 0 \quad &\Longrightarrow \quad \alphalpha\searrow \frac{1}{2} \quad \mbox{and}\quad \frac{2}{1+2\alphalpha}\nearrow 1,\\
\betabeta\nearrow \infty \quad &\Longrightarrow \quad \alphalpha\nearrow \infty \quad \mbox{and}\quad \frac{2}{1+2\alphalpha}\searrow 0,
\end{align*}
see Figure \ref{fig:alpha-contraction-factor}. In particular, we have genuine contraction in \eqref{E-main-one-step-contraction} since $0<\frac{2}{1+2\alphalpha}<1$ for every $\betabeta>0$.
\end{remark}

\begin{remark}[One-dimensional traits]
In this paper we restrict to one-dimensional traits, but note that an analogous version of \eqref{E-nonlinear} and \eqref{E-nonlinear-eigenproblem} makes sense in higher dimensions yet. In fact, these were studied in \cite{CLP-21-arxiv} for quadratic selection functions. However, a higher-dimensional version of our result for generic strongly convex selection function would require some non-trivial improvements of the present methods. Just to emphasize some non-trivial obstructions, we remark that our approach exploits a maximum principle for the Monge-Ampère equation in convex but not uniformly-convex domains, as described below. In this setting, it is not even clear why the standard elliptic regularity should hold up to the boundary, as in the seminal works \cite{C-96}. In two-dimensional domains with special symmetries, this theory has been developed recently in \cite{J-19}, but a higher dimensional extension would require further work which goes beyond the scope of this paper. 
\end{remark}

\begin{figure}
\begin{subfigure}{0.42\textwidth}
\begin{tikzpicture}[scale=0.85]
\begin{axis}[
  axis x line=middle, axis y line=middle,
  label/.style={at={(current axis.right of origin)},anchor=west},
  xmin=0, xmax=3, xtick={0,1,2,3}, xlabel=$\betabeta$,
  ymin=0, ymax=4.5, ytick={0,0.5,1,2,3,4}, ylabel=$\alphalpha$,
]
\addplot [
    domain=0:3, 
    samples=100, 
    color=black,
    line width=0.4mm,
]{((1+2*x+pow((1+2*x)^2+8*x,0.5)))/4};
\addplot [mark=*, mark size=1.5pt] coordinates {(0,0.5)};
\end{axis}
\end{tikzpicture}
\caption{Log-concavity parameter $\alphalpha$ as a function of $\betabeta$}
\label{subfig:alpha}
\end{subfigure}
\begin{subfigure}{0.42\textwidth}
\begin{tikzpicture}[scale=0.85]
\begin{axis}[
  axis x line=middle, axis y line=middle,
  label/.style={at={(current axis.right of origin)},anchor=west},
  xmin=0, xmax=3, xtick={0,1,2,3}, xlabel=$\betabeta$,
  ymin=0, ymax=1.25, ytick={0,0.25,0.5,0.75,1}, ylabel=$\frac{2}{1+2\alphalpha}$,
]
\addplot [
    domain=0:3, 
    samples=100, 
    color=black,
    line width=0.4mm,
]{((3+2*x)-pow((3+2*x)^2-8,0.5))/2};
\addplot [mark=*, mark size=1.5pt] coordinates {(0,1)};
\end{axis}
\end{tikzpicture}
\caption{Contraction parameter $\frac{2}{1+2\alphalpha}$ as a function of $\betabeta$}
\label{subfig:contraction-factor}
\end{subfigure}
\caption{Plot of the log-concavity parameter $\alphalpha$ of the eigenfunction $\bF$ and the contraction parameter $\frac{2}{1+2\alphalpha}$ in Theorem \ref{T-main} as a function of $\betabeta$}
\label{fig:alpha-contraction-factor}
\end{figure}

\medskip

\noindent {\bf Bibliographical notes}.

\smallskip

This work can be viewed as another brick to combine optimal transportation tools for non-conservative problems arising in biology. The connection between the Fisher infinitesimal model and the $L^2$ Wasserstein distance was spotted by {\sc G. Raoul} \cite{R-17-arxiv} (see also \cite{MR-13} for similar results in a different context of protein exchanges between cells). In fact, when there is no selection (that is, $m\equiv const$), the operator $\mathcal{T}$ is non-expansive for the latter distance. Contraction cannot be expected because of translational invariance. Nevertheless, it is contractive with rate $1/\sqrt{2}$ in the class of distributions having the same center of mass (the latter being preserved by the flow) \cite[Theorem 4.1 and Corollary 4.2]{R-17-arxiv}. This remarkable structure was exploited by {\sc G. Raoul} in a perturbative setting, when selection is small (in amplitude), and restricted to a compact interval ($m$ is constant beyond a certain range). More precisely, {\sc G. Raoul} proved that the dynamics is well captured by some averaged quantities (``moments'') of the Gaussian distribution coupled with the selection function, provided that the initial data is well-prepared, in the basin of attraction of the stationary state, and the amplitude of selection is small enough. For that purpose, he carefully established that the contraction issued from the infinitesimal operator was robust enough to dominate detrimental effects due to selection. Note that the later references consider overlapping generations, that is, a continuous-in-time rather than discrete dynamics. However, some fruitful analogy can be drawn between the results and methodology.    

In parallel, the regime of small segregation variance (when $G$ \eqref{E-G} has variance $\varepsilon^2$ and $\varepsilon$ is small enough) was investigated by \cite{CGP-19,P-20-arxiv} in another perturbative setting, without exploiting the Wasserstein metric structure. This methodology built upon the seminal works on vanishing viscosity limits associated with linear (asexual) modes of reproduction in quantitative genetics models \cite{DJMP-05,PB-08,BMP-09}. Interestingly, it was proven in \cite{CGP-19} that the problem \eqref{E-nonlinear-eigenproblem} lacks uniqueness in full generality. More precisely, it was possible to build a solution to \eqref{E-nonlinear-eigenproblem} centered in the vicinity of any local minimum of $m$, provided that the selection value at the local minimum is close enough to the global minimum. This result entails a clear separation with linear, order-preserving operators (and non-linear extensions \cite{M-07,N-88}) for which \eqref{E-nonlinear-eigenproblem} genuinely admits a unique solution (under standard irreducibility assumptions). 

Heuristically, uniqueness of the (non-linear) eigenpair $(\blambda,\bF)$ is rather clear when the selection function $m$ is convex, and \cite{CLP-21-arxiv} was a first contribution in this direction, restricted to $m(x)=\frac{\betabeta}{2}|x|^2$. By exploiting the quadratic structure of the operator $\mathcal{T}$ in \eqref{E-operator-T} (which involves products and convolutions by Gaussian density functions), it was possible to prove asynchronous exponential growth towards the explicit Gaussian distribution of equilibrium $\bF$, starting from any initial configuration $F_0$. This was achieved by a careful study of the binary tree of ancestors, together with explicit change of variables in a high-dimensional integral, to prove a sort of concentration of measure estimates. More precisely, it was shown that the traits of the ancestors decorrelate sufficiently fast, backward in the tree, from the trait of the individual at generation $n$. This implies that the dependence of the trait distribution $F_n$ at generation $n$ upon the initial distribution $F_0$ diminishes exponentially fast. Asynchronous exponential growth is a consequence of this observation, which is a backward feature.   

Last, but not least, let us mention that both the infinitesimal model \eqref{E-operator-T}, and the relative information \eqref{E-main-Fisher-information-infty} (or rather \eqref{E-main-Fisher-information-2} below) date back to a couple of seminal works by {\sc R. A. Fisher} in the same years (circa 1920) on seemingly different purposes, respectively \cite{F-18} and \cite{F-22}, see \cite{S-05} for a discussion. 

\medskip

\noindent {\bf Methodological notes}.

\smallskip

In the present study, we push further the observations of \cite{CLP-21-arxiv}. We identify a key mechanism ensuring a one-step contraction for the flow \eqref{E-nonlinear}. This can be summarized roughly as follows: 
\begin{quote} \em
For any two given individuals with traits $X$ and $X'$ respectively, the associated parental traits $(X_1,X_2)$ and $(X_1',X_2')$ are closer to each other than $(X,X')$ in some sense,
\end{quote} 
see also \cite[Appendix F.2]{GCBBLRC-22-arxiv} for a visual explanation.
To make sense of this contraction, we shall work with the $L^\infty$ Wasserstein distance, denoted by $W_\infty$ (in contrast with the usual $L^2$ Wasserstein distance). This naturally leads to estimates on the so-called $L^\infty$ relative Fisher information $\mathcal{I}_\infty$ \eqref{E-main-Fisher-information-infty} (in contrast to the usual ($L^2$) relative entropy $\mathcal{I}_2$, see \eqref{E-main-Fisher-information-2} below). The core estimate \eqref{E-main-one-step-contraction} is forward in time, and it naturally arises as a dual estimate of a backward in time estimate analogous to the work in \cite{CLP-21-arxiv}. 
\medskip

\paragraph*{\em A forward-backward argument.}
We propose a short warm-up to this argument, which may help the reader follow our method (without details of the proofs). Indeed, one complication of our setting is that each individual has two parents, so that the dimension of the distribution doubles at each generation. Nonetheless, the same methodology can be applied to the case of a single parent, which boils down to a {\em linear operator}. We thus consider, temporarily, the following linear operator:
\begin{equation}\label{E-operator-A}
\mathcal{A}[F](x):=e^{-m(x)}\int_{\mathbb{R}} G\left(x-y\right)\,F(y)\,dy,\quad x\in \mathbb{R},
\end{equation}
in place of the above non-linear operator $\mathcal{T}$ in \eqref{E-operator-T}. In this simpler case, the Krein-Rutman theorem can be applied (at least formally), and there exists an eigenpair $(\blambda,\bF)$ of the linear eigenproblem \eqref{E-nonlinear-eigenproblem} with $\mathcal{T}$ replaced by $\mathcal{A}$. Now, consider any solution $\{F_n\}_{n\in \mathbb{N}}$ to the time-discrete problem \eqref{E-nonlinear} with $\mathcal{T}$ replaced again by the linear operator $\mathcal{A}$. We may introduce the associated relative distribution $u_n = \frac{F_n}{\blambda^n \bF}$ to follow the trend of $F_n$ across generations. It satisfies the following equation:
$$
u_{n}(x) = \dfrac{\int_{\mathbb{R}} G(x-y)\,u_{n-1}(y)\bF(y)\, dy}{\int_{\mathbb{R}} G(x-z) \bF(z)\, dz} = \int_{\mathbb{R}} P(x;y)\,u_{n-1}(y) \, dy,\quad n\in \mathbb{N},\,x\in \mathbb{R},
$$
where the $x$-dependent probability distribution function $P(x;\cdot)$ is defined as 
\begin{equation}
P(x;y) = \dfrac{ G(x-y) \bF(y)}{\int_{\mathbb{R}} G(x-z) \bF(z)\, dz},\quad x,y\in \mathbb{R},
\end{equation}
and it can be interpreted as the transition probability from trait $y$ to trait $x$. The fact that it is a probability distribution function, $\int P(x;y)\, dy = 1$, is immediate by the choice of the normalization, which is such that constant functions $u_n \equiv const$ are invariant by the flow. 

Next, it can be proven that, if $\bF$ is strongly log-concave, then we have
\begin{equation}\label{E-contraction-linear-1}
W_\infty(P(x;\cdot), P(x',\cdot)) \leq \kappa\, |x-x'| ,
\end{equation}
where $\kappa\in(0,1)$ is related to the modulus of convexity of $\bV = -\log \bF$. By duality, this backward contraction estimate results in the forward estimate below, 
$$
\left\Vert \dfrac{d}{dx} \left(\log u_n\right)\right\Vert_{L^\infty} \leq \kappa \left\Vert \dfrac{d}{dx} \left(\log u_{n-1}\right)\right\Vert_{L^\infty},
$$
which by iteration and using the $L^\infty$ relative Fisher information, it can be expressed as follows
\begin{equation}\label{E-contraction-linear-2}
\mathcal{I}_\infty(F_n\Vert   \bF) \leq \kappa^n\, \mathcal{I}_\infty(F_{0}\Vert  \bF).
\end{equation}
As mentioned above, the key estimate \eqref{E-contraction-linear-1} is a consequence of the maximum principle on the Monge-Amp\`ere equation for the optimal transportation plan between $P(x;\cdot)$ and $P(x';\cdot)$. Interestingly, this is an argument borrowed from the theory of conservative equations, whereas our problem is not. The trick is to match an individual to its ancestor, which is obviously a conservative process, backward in time. 
\medskip

\paragraph*{\em Analogy with the Bakry-Emery argument.} There is some analogy between our results and the standard Bakry-Emery method for exponential relaxation towards equilibrium for the gradient flow of some displacement convex ``entropy'' \cite{B-94,AMTU-01,V-03,BGL-14}. Indeed, from \eqref{E-main-one-step-contraction} (alternatively \eqref{E-contraction-linear-2} in the linear case) we obtain exponential convergence on a quantity which is the $L^\infty$ analog of the usual ($L^2$) relative Fisher information,
\begin{equation}\label{E-main-Fisher-information-2}
\mathcal{I}_2(P\Vert Q):=\int_{\mathbb{R}} \left | \dfrac{d}{dx} \left(\log \frac{P}{Q}\right)(x)\right|^2 P(x) \, dx.
\end{equation}
Recall that, in the usual Bakry-Emery argument, the exponential convergence is established at the level of the dissipation of entropy, that is, the relative Fisher information \cite{V-03}. The exponential relaxation of the dissipation is intimately linked with the displacement convexity of the entropy functional (essentially because the gradient flow is differentiated, making appear the second derivative of the entropy functional). In our argument, it is the convexity of $\bV = -\log \bF$ which  induces the geometrical relaxation of the uniform relative Fisher information. 
\medskip

\paragraph*{\em Connection with another projective metric.} The uniform relative Fisher information \eqref{E-main-Fisher-information-infty} may also be viewed as a kind of first order version of the {\it Hilbert's projective distance} associated to the cone of non-negative functions, that is, 
$$\mathfrak{H}(P,Q):=\mathrm{osc}\left(\log \frac{P}{Q}\right)\equiv \sup_{x\in \mathbb{R}}\log\frac{P(x)}{Q(x)} -\inf_{x\in \mathbb{R}}\log\frac{P(x)}{Q(x)}.$$
The latter distance is well-suited for the analysis of 1-positively homogeneous, order-preserving, operators \cite{N-88}. An obvious reason is the projective character of that metric \cite{N-94}, which makes it insensitive to the exponential growth (or decay) $\mathcal O(\blambda^n)$.  This character is also shared by $\mathcal{I}_\infty$ (in contrast with $\mathcal{I}_2$). 
\medskip

\paragraph*{\em A linear argument, even in the non-linear case.} The previous discussion focussed on the linear operator \eqref{E-operator-A} for the sake of clarity. Interestingly, the non-linear case under study \eqref{E-operator-T} also involves a linear argument when formulated backward in time. Similarly, define the relative distribution $u_n=\frac{F_n}{\blambda^n \bF}$, where the pair $(\blambda,\bF)$ is the strongly log-concave solution to \eqref{E-nonlinear-eigenproblem} from part $(i)$ in the main Theorem \ref{T-main}. Then, $u_n$ satisfies the following forward-in-time non-linear problem:
\begin{equation}\label{E-nonlinear-normalized-intro}
u_n(x)=\frac{1}{\Vert u_{n-1}\bF\Vert_{L^1}}\iint_{\mathbb{R}^d}P(x;x_1,x_2)\,u_{n-1}(x_1)\,u_{n-1}(x_2)\,dx_1\,dx_2,\quad n\in \mathbb{N},\quad x\in \mathbb{R},
\end{equation}
where the function $P(x;x_1,x_2)$ is explicitly defined as
\begin{equation}\label{E-transition-probability-intro}
P(x;x_1,x_2)=\frac{G\left(x-\frac{x_1+x_2}{2}\right)\,\bF(x_1)\,\bF(x_2)\,dx_1\,dx_2}{\iint_{\mathbb{R}^2}G\left(x-\frac{x_1'+x_2'}{2}\right)\,\bF(x_1')\,\bF(x_2')\,dx_1'\,dx_2'},\quad x\in \mathbb{R},\,(x_1,x_2)\in \mathbb{R}^2.
\end{equation}
Since $P$ is normalized with respect to the variables $(x_1,x_2)$, then it can be regarded as a Markov kernel with source $x\in \mathbb{R}$ and target $(x_1,x_2)\in \mathbb{R}^2$ representing the probability of transitioning from the traits of the parents $(x_1,x_2)$ to the traits of  the offspring $x$.
In Lemma \ref{L-contraction-transition-probability}, we prove the very same contraction estimate as in \eqref{E-contraction-linear-1} for the family of Markov kernels $P$ indexed by its first variable $x$. The key difference is that this Markov kernel makes the transition between $u_n$ and $u_{n-1}\otimes u_{n-1}$ due to the joint distribution of parental traits (the non-linearity, in fact). This is rescued by an appropriate tensorization property of the relative Fisher information, which is expressed in Lemma \ref{L-log-estimate}. 

\medskip

\paragraph*{\em A close-to-optimal result despite a non-optimal argument.} The rate of contraction $\frac{2}{1+2\alphalpha}$ coincides with the optimal one in the quadratic case (see  Remark \ref{R-quadratic-selection}). However, there is some non-optimal step in the proof.  Indeed, our key contraction estimate \eqref{E-contraction-linear-1} is a consequence of the maximum principle on the Monge-Amp\`ere equation satisfied by the Brenier transportation map between the joint distributions of the parental traits $(X_1,X_2)$ and $(X_1',X_2')$. There is some subtlety here to be noticed, as the contraction is set for the $L^\infty$ Wasserstein distance (maximum of the optimal transportation displacement), whereas the Brenier transportation map used in our argument is  optimal for the $L^2$ Wasserstein distance. Nevertheless, in the quadratic case, the transportation map is simply a translation, so that it comes with the same cost, measured either in (weighted) $L^2$ or in $L^\infty$.

\medskip

\noindent {\bf Organization of the paper}.

\smallskip

In Section \ref{S-sketch-one-step-contraction} we provide a sketch of the proof of the one-step contraction property in Theorem \ref{T-main}(ii) under an additional technical condition. In Section \ref{S-contractivity} we derive the fundamental contraction property of the one-step transition probability of the problem under the $W_{\infty,1}$ Wasserstein distance (see definition below), thus removing the technical condition in the above sketch of the proof. In Section \ref{S-truncated-problem} we analyze a truncated version of the time-marching problem \eqref{E-nonlinear-eigenproblem} to bounded intervals, which will be necessary in next part. Section \ref{S-equilibria} focuses on proving the existence of strongly log-concave solutions of the nonlinear eigen-problem \eqref{E-nonlinear-eigenproblem} as claimed in Theorem \ref{T-main}(i). In Section \ref{S-convergence-equilibrium} we prove asymptotic exponential growth of \eqref{E-nonlinear-eigenproblem} for restricted initial data in Theorem \ref{T-main}(iii). Finally, Appendices \eqref{A-intermediate-dualities}, \eqref{A-lower-bound-convolution}, \eqref{A-also-l1-balls} contain some technical results to alleviate the reading of the paper.

\medskip

\noindent {\bf Notation}.

\smallskip

\noindent $\bullet$ {\bf (Vector norms)} Along the paper, $\mathbb{R}^d$ will be endowed with the various $\ell_q$ norms, namely, for any $z=(z_1,\ldots,z_d)\in \mathbb{R}^d$ and any $1\leq q\leq \infty$ we denote
\begin{equation}\label{E-lq-norm}
\Vert z\Vert_q:=\left\{
\begin{array}{ll}
\displaystyle\left(\sum_{i=1}^d |z_i|^q\right)^{1/q}, & \mbox{if }1\leq q<\infty,\\
\displaystyle\max_{1\leq i\leq d}|z_i|, & \mbox{if }q=\infty.
\end{array}
\right.
\end{equation}
The associated $\ell_2$ and $\ell_\infty$ open balls centered at $0$ with radius $R>0$ are respectively denoted by
\begin{equation}\label{E-BR-QR-balls}
B_R:=\{z\in \mathbb{R}^d:\,\Vert z\Vert_2<R\}\quad \mbox{and}\quad Q_R:=\{z\in \mathbb{R}^d:\,\Vert z\Vert_\infty<R\}.
\end{equation}

\smallskip

\noindent $\bullet$ {\bf (Measure spaces)} We denote by $\mathcal{M}(\mathbb{R}^d)$ the space of finite Radon measures, endowed with the total variation norm, and $\mathcal{M}^+(\mathbb{R}^d)$ represents the cone of non-negative finite Radon measures. Similarly, $\mathcal{P}(\mathbb{R}^d)$ is the subspace of probability measures, endowed with the narrow topology except otherwise specified. 

\smallskip

\noindent $\bullet$ {\bf (Wasserstein metrics)} For any $1\leq p\leq \infty$, we define the $L^p$ Wasserstein space 
\begin{align*}
\mathcal{P}_p(\mathbb{R}^d)&:=\left\{P\in \mathcal{P}(\mathbb{R}^d):\,\int_{\mathbb{R}^d}|z|^p\,P(dz)<\infty\right\}, \quad \mbox{if }1\leq p<\infty,\\
\mathcal{P}_\infty(\mathbb{R}^d)&:=\left\{P\in \mathcal{P}(\mathbb{R}^d):\,\supp P\mbox{ is compact}\right\}.
\end{align*}
Similarly, we consider the $L^p$ Wasserstein metric associated with the $\ell_q$ vector norm of $\mathbb{R}^d$. Specifically, for any $P,Q\in \mathcal{P}(\mathbb{R}^d)$ and any $1\leq p,q\leq \infty$ we denote
\begin{align}\label{E-Wpq-distance}
\begin{aligned}
W_{p,q}(P,Q)&:=\left(\inf_{\gamma\in \Gamma(P,Q)}\int_{\mathbb{R}^{2d}}\Vert z-\tilde z\Vert_q^p\,\gamma(dz,d\tilde z)\right)^{1/p}, \quad \mbox{if }1\leq p<\infty,\\
W_{\infty,q}(P,Q)&:=\inf_{\gamma\in \Gamma(P,Q)}~ \underset{z,\tilde z\in \mathbb{R}^d}{\gamma\mbox{-}\esssup}~\Vert z-\tilde z \Vert_q,
\end{aligned}
\end{align}
where $\Gamma(P,Q)$ is the family of transference plan $\gamma\in \mathcal{P}(\mathbb{R}^d\times \mathbb{R})^d$ with marginals $P$ and $Q$. Whilst the $L^p$ Wasserstein distances could be infinitely-valued over $\mathcal{P}(\mathbb{R}^d)$, note that they take finite values over $\mathcal{P}_p(\mathbb{R}^d)$ at least, although not exclusively. In particular, note that the $L^\infty$ Wasserstein distances take finite values over distributions $P$ and $Q$ that only differ on a space translation independently on their supports being compact or not. For this reason, along the paper we shall not restrict to compactly supported distributions, but anyway in all our computations the involved $L^\infty$ Wasserstein distances will take finite values as it will become clear later in the proofs.

\section{Proof of the one-step contraction property}\label{S-sketch-one-step-contraction}

For the reader convenience, we provide first the main ingredients behind the proof of the fundamental one-step contraction property in Theorem \ref{T-main}(ii). Here, we shall assume that Theorem \ref{T-main}(i) holds true, {\it i.e.}, there exists a $\alphalpha$-log-concave solution $(\blambda,\bF)$ to \eqref{E-nonlinear-eigenproblem} with $\alphalpha$ given by \eqref{E-main-alpha}. We remark that its use will be crucial in our following argument, but its proof is not apparent with regards to classical approaches based on the application of the Krein-Ruthman theorem. For this reason, a major part of this paper is devoted to rigorously address this question, which will be introduced in full detail in Section \ref{S-equilibria} of this paper.

\subsection{Sharp log-concavity parameter} 

First, we elaborate on the precise value of $\alphalpha$ given in \eqref{E-main-alpha}. Specifically, we prove that it amounts to the sharpest possible log-concavity parameter of a generic solution $(\lambda,F)$ to \eqref{E-nonlinear-eigenproblem}. To this end, it is worthwhile to note that the nonlinear operator $\mathcal{T}$ in \eqref{E-operator-T} can be restated as the composition of a multiplicative operator and a double convolution operator, namely,
\begin{equation}\label{E-operator-T-restatement}
\mathcal{T}[F]=\frac{e^{-m}}{\Vert F\Vert_{L^1}}(G*\bar F*\bar F),
\end{equation}
for every $F\in L^1_+(\mathbb{R})\setminus\{0\}$, where we define $\bar F(x):=2F(2x)$ for $x\in \mathbb{R}$. The starting point is to realize that strong log-concavity is stable under convolutions. More specifically, we have the following well known result (see \cite[Proposition 7.1]{SW-14} for futher details).

\begin{lemma}[Stability of log-concavity under convolutions]\label{L-stability-log-concavity-convolutions}
Assume that $F_1,F_2\in L^1_+(\mathbb{R})$ verify that $F_i$ are $\gamma_i$-log-concave for some $\gamma_1,\gamma_2>0$. Then $F_1*F_2$ is also $\gamma$-log-concave for $\gamma>0$ given by
$$\frac{1}{\gamma}=\frac{1}{\gamma_1}+\frac{1}{\gamma_2}.$$
\end{lemma}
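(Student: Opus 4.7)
The plan is to reduce $\gamma$-log-concavity of the convolution to the pointwise lower bound $W''(x) \geq \gamma$ on $W := -\log(F_1*F_2)$, and to derive this bound via the Brascamp--Lieb variance inequality applied to a well-chosen conditional measure. I would work first in the smooth regime, writing $F_i = e^{-V_i}$ with $V_i \in C^2(\mathbb{R})$ and $V_i'' \geq \gamma_i$; the general log-concave case then follows by mollifying $F_i$ with a small Gaussian (which is strongly log-concave and, once the statement is known in the smooth case, preserves $\gamma_i$-log-concavity up to a correction vanishing with the bandwidth) and passing to the limit in the Pr\'ekopa--Leindler characterization of $\gamma$-log-concavity.

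Next, I would introduce the auxiliary family of probability measures $\mu_x(dy) := e^{-h_x(y)+W(x)}\,dy$ with potential $h_x(y) := V_1(y)+V_2(x-y)$, so that $e^{-W(x)}=\int e^{-h_x(y)}\,dy$. Differentiating twice under the integral sign and rearranging yields
\begin{equation*}
W'(x) = \mathbb{E}_{\mu_x}[V_2'(x-Y)], \qquad W''(x) = \mathbb{E}_{\mu_x}[V_2''(x-Y)] - \mathrm{Var}_{\mu_x}(V_2'(x-Y)).
\end{equation*}
The naive bound $W''(x) \geq \gamma_2$ from the first term is too weak; the heart of the argument is to control the variance. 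Since $\mu_x$ is strongly log-concave with $h_x''(y) = V_1''(y)+V_2''(x-y) \geq \gamma_1+\gamma_2 > 0$, the Brascamp--Lieb inequality applied to the test function $y \mapsto V_2'(x-y)$, whose derivative is $-V_2''(x-y)$, gives
\begin{equation*}
\mathrm{Var}_{\mu_x}(V_2'(x-Y)) \leq \mathbb{E}_{\mu_x}\!\left[\frac{V_2''(x-Y)^2}{V_1''(Y)+V_2''(x-Y)}\right].
\end{equation*}

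Substituting back and using the algebraic identity $a - \frac{a^2}{a+b} = \frac{ab}{a+b}$ with $a = V_2''(x-Y)$ and $b = V_1''(Y)$ converts the right-hand side into a harmonic mean,
\begin{equation*}
W''(x) \geq \mathbb{E}_{\mu_x}\!\left[\frac{V_1''(Y)\,V_2''(x-Y)}{V_1''(Y)+V_2''(x-Y)}\right] \geq \frac{\gamma_1\gamma_2}{\gamma_1+\gamma_2} = \gamma,
\end{equation*}
the last inequality following from componentwise monotonicity of $(a,b) \mapsto ab/(a+b)$ on $(0,\infty)^2$. The only delicate step is the appeal to Brascamp--Lieb (together with the justification for interchanging differentiation and integration), which is standard in view of the superlinear growth of $V_1,V_2$ inherited from strong convexity; the approximation step for non-smooth densities is routine and does not affect the constant.
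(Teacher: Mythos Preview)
Your argument is correct. The paper does not actually prove this lemma: it states it as a well-known result and refers to \cite[Proposition 7.1]{SW-14} for details. What you have written is in fact one of the standard proofs of this fact---the Brascamp--Lieb route---and your computation of $W''$ as an expected curvature minus a variance, followed by the Brascamp--Lieb bound and the harmonic-mean identity $a - a^2/(a+b) = ab/(a+b)$, is exactly right. The monotonicity step at the end is also fine since $(a,b)\mapsto ab/(a+b) = (1/a+1/b)^{-1}$ is increasing in each variable on $(0,\infty)^2$.

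One minor comment: in the approximation step you mention mollifying by a small Gaussian. This is slightly circular as phrased, since you appeal to the very result you are proving to say the mollified function remains strongly log-concave. A cleaner way is to note directly that if $F_i$ is $\gamma_i$-log-concave, then $F_i * G_\varepsilon$ is smooth, strictly positive, and $(\gamma_i^{-1}+\varepsilon)^{-1}$-log-concave by the result in the smooth case (Gaussian convolved with Gaussian, then with $F_i$), or simply to observe that $e^{\gamma_i x^2/2}F_i(x)$ is log-concave in the classical Pr\'ekopa sense and argue via Pr\'ekopa's theorem applied after factoring out the Gaussian part. Either way this is routine, as you say, and does not affect the final constant.
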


Let us remark that the above result could be applied to any couple of Gaussian distributions $F_1$ and $F_2$ with respective variances $\sigma^2_1$ and $\sigma_2^2$ since they are in particular $\gamma_i$-log-concave with parameters $\gamma_i=\frac{1}{\sigma^2_i}$ for $i=1,2$. In doing so one finds that the above result is consistent with the classical fact that the convolution $F_1*F_2$ of two Gaussian distributions is again Gaussian with variance $\sigma^2=\sigma_1^2+\sigma_2^2$.

In addition, note that the mortality function $m$ has been chosen $\betabeta$-convex by the hypothesis \eqref{H-main-m-1} in Theorem \ref{T-main}, and then $e^{-m}$ is $\betabeta$-log-concave. Since strong log-concavity is also preserved under multiplication, and $\bar F$ is $4\gamma$-log-concave whenever $F$ is $\gamma$-log-concave, then we obtain that log-concavity must also be preserved under the full operator $\mathcal{T}$.

\begin{lemma}[Stability of log-concavity under $\mathcal{T}$]\label{L-stability-log-concavity}
Assume that $F\in L^1_+(\mathbb{R})\setminus\{0\}$ is $\gamma$-log-concave for some $\gamma>0$. Then, $\mathcal{T}[F]$ is also $\delta$-log-concave for $\delta>0$ given by
$$\delta=\betabeta+\frac{2\gamma}{1+2\gamma}.$$
\end{lemma}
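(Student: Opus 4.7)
The plan is to exploit the decomposition \eqref{E-operator-T-restatement} of $\mathcal{T}$ as the product of $\frac{e^{-m}}{\|F\|_{L^1}}$ and the double convolution $G\ast\bar F\ast\bar F$, and then combine Lemma \ref{L-stability-log-concavity-convolutions} with the trivial stability of log-concavity under rescaling, translation, and multiplication of log-concave functions. The normalizing constant $\|F\|_{L^1}$ plays no role, as multiplication by a positive constant preserves the log-concavity parameter.

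First, I would record three elementary observations. (a) Since $(\log \bar F)''(x) = 4(\log F)''(2x) \leq -4\gamma$ by the scaling $\bar F(x)=2F(2x)$, the rescaled density $\bar F$ is $4\gamma$-log-concave. (b) The Gaussian $G$ in \eqref{E-G} is $1$-log-concave, because $(\log G)''\equiv -1$. (c) By hypothesis \eqref{H-main-m-1}, the factor $e^{-m}$ is $\betabeta$-log-concave.

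Next, I would combine these inputs via Lemma \ref{L-stability-log-concavity-convolutions}. Convolving $\bar F$ with itself yields a $(2\gamma)$-log-concave density, since the harmonic sum reads $\tfrac{1}{4\gamma}+\tfrac{1}{4\gamma}=\tfrac{1}{2\gamma}$. Convolving once more with $G$ provides a $\gamma_\ast$-log-concave density with
\begin{equation*}
\frac{1}{\gamma_\ast}=\frac{1}{1}+\frac{1}{2\gamma}=\frac{1+2\gamma}{2\gamma},\qquad\text{that is,}\qquad \gamma_\ast=\frac{2\gamma}{1+2\gamma}.
\end{equation*}
Finally, multiplying by $e^{-m}$ adds the convexity moduli: the second derivative of the log of the product is $-m''+(\log(G\ast\bar F\ast\bar F))''\leq -\betabeta-\gamma_\ast$. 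This gives $\delta$-log-concavity of $\mathcal{T}[F]$ with $\delta=\betabeta+\tfrac{2\gamma}{1+2\gamma}$, as claimed.

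There is essentially no obstacle: the only nontrivial input is Lemma \ref{L-stability-log-concavity-convolutions}, which is already cited, and the proof amounts to bookkeeping the log-concavity moduli under the three elementary operations (rescaling, convolution, multiplication) into which \eqref{E-operator-T-restatement} decomposes $\mathcal{T}$. The identity for $\delta$ in the lemma is precisely the fixed-point relation \eqref{E-main-alpha} when $\gamma=\delta=\alphalpha$, which explains why $\alphalpha$ is sharp for any log-concave solution of \eqref{E-nonlinear-eigenproblem}.
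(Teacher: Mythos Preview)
Your proof is correct and follows essentially the same approach as the paper: the paragraph preceding the lemma already sketches exactly this argument, noting that $\bar F$ is $4\gamma$-log-concave, that $e^{-m}$ is $\betabeta$-log-concave, and that strong log-concavity is preserved under multiplication, so that the decomposition \eqref{E-operator-T-restatement} together with Lemma \ref{L-stability-log-concavity-convolutions} yields the result. Your write-up simply makes the harmonic-sum bookkeeping explicit, which the paper leaves to the reader.
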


Thereby, log-concavity is preserved by the dynamics in \eqref{E-nonlinear}, and we also obtain that the sharpest log-concavity coefficient of the eigenfunction $\bF$ must be the one given in \eqref{E-main-alpha}.

\begin{lemma}[Propagation of log-concavity]

~

\begin{enumerate}[label=(\roman*)]
\item Assume that $F_0\in L^1_+(\mathbb{R})\setminus\{0\}$ is $\alphalpha_0$-log-concave for some $\alphalpha_0>0$. Then, the solution $\{F_n\}_{n\in \mathbb{N}}$ to the evolution problem \eqref{E-nonlinear} verifies that $F_n$ is $\alphalpha_n$-log-concave for $\alphalpha_n>0$ verifying the recurrence
\begin{equation}\label{E-alphan}
\alphalpha_n=\betabeta+\frac{2\alphalpha_{n-1}}{1+2\alphalpha_{n-1}},\quad n\in \mathbb{N}.
\end{equation}
\item Assume that $(\lambda,F)$ is any solution to the nonlinear eigenproblem \eqref{E-nonlinear-eigenproblem} and that $F$ is strongly log-concave. Then, $F$ is $\alphalpha$-log-concave with $\alphalpha$ given by \eqref{E-main-alpha}, that is, 
$$\alphalpha=\betabeta+\frac{2\alphalpha}{1+2\alphalpha}.$$
\end{enumerate}
\end{lemma}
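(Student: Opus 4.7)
Both claims reduce, via the stability result of Lemma \ref{L-stability-log-concavity}, to a short one-dimensional fixed-point analysis. Since the non-trivial analytic content is already absorbed into that lemma, I do not expect any serious obstacle; the only mildly delicate step is the monotone convergence of the fixed-point iteration to the root of \eqref{E-main-alpha}.

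\emph{Part (i).} Argue by induction on $n$. The base case $n = 0$ is the hypothesis. For the inductive step, if $F_{n-1}$ is $\alphalpha_{n-1}$-log-concave, then Lemma \ref{L-stability-log-concavity} applied to $F_n = \mathcal{T}[F_{n-1}]$ shows that $F_n$ is $\alphalpha_n$-log-concave with $\alphalpha_n = \betabeta + \frac{2\alphalpha_{n-1}}{1+2\alphalpha_{n-1}}$, which is precisely the recurrence \eqref{E-alphan}.

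\emph{Part (ii).} Rewriting the eigen-relation as $F = \lambda^{-1}\mathcal{T}[F]$ and using that multiplication by the positive scalar $\lambda^{-1}$ preserves log-concavity, Lemma \ref{L-stability-log-concavity} yields the implication: if $F$ is $\gamma$-log-concave, then $F$ is in fact $h(\gamma)$-log-concave, where
\[
h(\gamma) := \betabeta + \frac{2\gamma}{1+2\gamma}.
\]
By the strong log-concavity hypothesis, there exists $\gamma_0 > 0$ such that $F$ is $\gamma_0$-log-concave; iterating the above implication shows that $F$ is $\gamma_k$-log-concave for every $k \geq 0$, with $\gamma_k := h(\gamma_{k-1})$. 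The map $h$ is smooth, strictly increasing, and strictly concave on $(0,\infty)$, with $h(0) = \betabeta > 0$ and $h(\gamma) \to \betabeta + 1$ as $\gamma \to \infty$. Hence $h(\gamma) - \gamma$ is concave, positive at $\gamma = 0$, and tends to $-\infty$, so it admits a unique positive zero; a short computation identifies this zero with the root $\alphalpha$ of \eqref{E-main-alpha} and verifies $\alphalpha > \tfrac12$ for every $\betabeta > 0$. The monotonicity of $h$ together with the sign of $h(\gamma) - \gamma$ on either side of $\alphalpha$ force the iteration $(\gamma_k)_{k \geq 0}$ to be monotone (increasing if $\gamma_0 < \alphalpha$, decreasing if $\gamma_0 > \alphalpha$) and bounded, hence convergent to $\alphalpha$. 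Passing the pointwise bound $-(\log F)''(x) \geq \gamma_k$ to the limit in $k$ then gives $-(\log F)'' \geq \alphalpha$ on $\mathbb{R}$, i.e., $F$ is $\alphalpha$-log-concave, as claimed.
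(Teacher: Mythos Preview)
Your proof is correct and follows essentially the same approach as the paper: part (i) by induction via Lemma \ref{L-stability-log-concavity}, and part (ii) by iterating the self-improvement $F$ is $\gamma$-log-concave $\Rightarrow$ $F$ is $h(\gamma)$-log-concave and passing to the limit of the resulting sequence $\gamma_k$. The paper phrases (ii) by setting $F_n=\lambda^n F$ and invoking (i), but this is the same iteration; you supply the monotone fixed-point analysis that the paper leaves implicit when it simply asserts $\alphalpha_n\to\alphalpha$.
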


\begin{proof}
Since (i) is clear by Lemma \ref{L-stability-log-concavity}, we just prove (ii). Recall that for any solution $(\lambda,F)$ of \eqref{E-nonlinear-eigenproblem} with $\gamma$-log-concave $F$, we can build $F_n(x)=\lambda^n F(x)$, which solves the evolution problem \eqref{E-nonlinear}. Therefore, the above applied to $\{F_n\}_{n\in \mathbb{N}}$ shows that $F$ is $\alphalpha_n$ log-concave for any $n\in \mathbb{N}$ with $\{\alphalpha\}_{n\in \mathbb{N}}$ verifying the recurrence \eqref{E-alphan} above and $\alphalpha_0=\gamma$. Since $\alphalpha_n\rightarrow \alphalpha$, then $F$ is also $\alphalpha$-log-concave.
\end{proof}

\subsection{The renormalized problem}\label{ss-renormalized-problem}
We introduce a renormalized version of the evolution problem \eqref{E-nonlinear}. Specifically, for any solution $\{F_n\}_{n\in \mathbb{N}}$ to \eqref{E-nonlinear} we renormalize by the strongly log-concave quasi-equilibrium $\blambda^n \bF$ granted in Theorem \ref{T-main}(i). Namely, we set
\begin{equation}\label{E-nonlinear-normalization}
u_n(x):=\frac{F_n(x)}{\blambda^n \bF(x)},\quad n\in \mathbb{N},\,x\in \mathbb{R}.
\end{equation}
By inspection, we obtain that $\{u_n\}_{n\in \mathbb{N}}$ must solve the evolution problem
\begin{equation}\label{E-nonlinear-normalized}
u_n(x)=\frac{1}{\Vert u_{n-1}\bF \Vert_{L^1}}\iint_{\mathbb{R}^2}P(x;x_1,x_2)\,u_{n-1}(x_1) \,u_{n-1}(x_2)\,dx_1\,dx_2,
\end{equation}
for any $x\in \mathbb{R}$, where $P(x;x_1,x_2)$ is the {\it one-step transition probability} of transitioning from the parental traits $(x_1,x_2)$ to the descendant trait $x$. More, specifically, $P(x;\cdot)\in L^1_+(\mathbb{R}^2)\cap\mathcal{P}(\mathbb{R}^2)$ is a probability density on two variables $(x_1,x_2)$ depending on the parameter $x\in \mathbb{R}$ which takes the form (recall the notation $\bF = e^{-\bV}$),
\begin{align}\label{E-transition-probability}
\begin{aligned}
&P(x; x_1,x_2):=\frac{1}{Z(x)} e^{-W(x; x_1,x_2)},\quad x\in \mathbb{R},\quad (x_1,x_2)\in \mathbb{R}^2,\\
&W(x; x_1,x_2):=\frac{1}{2}\left\vert x-\frac{x_1+x_2}{2}\right\vert^2+\bV(x_1)+\bV(x_2),\\
&Z(x):=\iint_{\mathbb{R}^2}e^{-W(x; x_1,x_2)}\,dx_1\,dx_2.
\end{aligned}
\end{align}

Inspired by our method in \cite{CLP-21-arxiv}, we plan to study the relaxation to zero of $\left \Vert \frac{d}{dx}(\log u_n)\right \Vert_{L^\infty}$ as $n$ grows. Nevertheless, contrarily to the aforementioned paper, we do not need to accumulate a large enough amount of generations in order to observe some ergodic behavior, but we rather find a precise contraction of such a quantity after a single step. 

\subsection{A nonlinear Kantorovich-type duality}

Our new approach exploits a nice nonlinear version of a Kantorovich-type ``duality'' which relates the $L^\infty$ transport distance to the Lipschitz norm of the log of test functions. This nonlinear extension is reminiscent of the usual Kantorovich duality theorem, which relates $L^1$ transport distance to the Lipschitz norm of test functions, see \cite[Theorem 6.1.1]{AGS-08}. To the best of our knowledge, this relation appears to be new. Moreover, it does not represent an isolated example but there is a full family of related inequalities interpolating between the (classical) $L^1$ result and the seemingly new) $L^\infty$ result, and which further adapt to $L^p$ transport distances, see Appendix \ref{A-intermediate-dualities}.

\begin{lemma}[$L^\infty$-type Kantorovich duality]\label{L-log-estimate}
Consider the one-step transition from $u_0$ to  $u_1$ in \eqref{E-nonlinear-normalized}, where it is assumed that $u_0\in C^1(\mathbb{R})$ with $u_0>0$ and $\frac{d}{dx}(\log u_0)\in L^\infty(\mathbb{R})$. Then, we have
\begin{equation}\label{E-log-estimate}
\vert \log u_1(x)-\log u_1(\tilde x)\vert\leq \left \Vert\frac{d}{dx} (\log u_0)\right \Vert_{L^\infty}\,W_{\infty,1}(P(x;\cdot),P(\tilde x;\cdot)),
\end{equation}
for any $x,\tilde x\in \mathbb{R}$. Here, the metric $W_{\infty,1}$ represents the $L^\infty$ Wasserstein distance associated with the $\ell_1$ norm, {\it cf.} \eqref{E-Wpq-distance}.
\end{lemma}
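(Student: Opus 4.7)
The normalizing factor $\|u_0\bF\|_{L^1}^{-1}$ in \eqref{E-nonlinear-normalized} is independent of $x$, so it cancels in the ratio $u_1(x)/u_1(\tilde x)$. Setting $\phi(x_1,x_2):=u_0(x_1)\,u_0(x_2)$, I can therefore rewrite the left-hand side of \eqref{E-log-estimate} as
\[
\log u_1(x)-\log u_1(\tilde x)=\log\frac{\displaystyle\iint_{\mathbb{R}^2}\phi\,dP(x;\cdot)}{\displaystyle\iint_{\mathbb{R}^2}\phi\,dP(\tilde x;\cdot)}.
\]
Once in this form, the statement reduces to a purely transport-theoretic inequality: if $\log\phi$ is $L$-Lipschitz on $\mathbb{R}^2$ (with respect to $\ell_1$), then the log-ratio of integrals against two probability measures is bounded by $L$ times their $W_{\infty,1}$-distance.

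\textbf{Key steps.} First, I check the Lipschitz constant of $\log\phi$. Since $\log\phi(x_1,x_2)=\log u_0(x_1)+\log u_0(x_2)$, its gradient is $\bigl((\log u_0)'(x_1),(\log u_0)'(x_2)\bigr)$, whose $\ell_\infty$ norm is at most $L:=\|(\log u_0)'\|_{L^\infty}$. By duality of $\ell_1$ and $\ell_\infty$, $\log\phi$ is $L$-Lipschitz on $\mathbb{R}^2$ equipped with the $\ell_1$ norm. Second, given $\varepsilon>0$, I pick a transference plan $\gamma\in\Gamma(P(x;\cdot),P(\tilde x;\cdot))$ such that
\[
\|(y_1,y_2)-(\tilde y_1,\tilde y_2)\|_1\leq D_\varepsilon:=W_{\infty,1}(P(x;\cdot),P(\tilde x;\cdot))+\varepsilon,\qquad\gamma\text{-a.e.}
\]
Then $\gamma$-almost everywhere one has $\phi(y_1,y_2)\leq e^{L D_\varepsilon}\phi(\tilde y_1,\tilde y_2)$. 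Integrating against $\gamma$ and using that the marginals of $\gamma$ are $P(x;\cdot)$ and $P(\tilde x;\cdot)$ respectively yields
\[
\iint_{\mathbb{R}^2}\phi\,dP(x;\cdot)\leq e^{L D_\varepsilon}\iint_{\mathbb{R}^2}\phi\,dP(\tilde x;\cdot).
\]
Taking logarithms, exchanging the roles of $x$ and $\tilde x$, and sending $\varepsilon\downarrow 0$ delivers \eqref{E-log-estimate}.

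\textbf{Potential obstacles.} There is nothing deep here beyond a clean bookkeeping. The only mildly delicate point is justifying that the essential supremum defining $W_{\infty,1}$ can be realized (up to $\varepsilon$) by an explicit plan — this is a standard fact about the $L^\infty$ Wasserstein distance, so it poses no real difficulty. One should also note that the integrals $\iint\phi\,dP(x;\cdot)$ are automatically finite under hypothesis \eqref{H-main-F0}: indeed, the Lipschitz bound on $\log u_0$ forces $u_0$ to grow at most exponentially, while $P(x;\cdot)$ inherits Gaussian-type tails from $\bF$ (which is $\alphalpha$-log-concave by Theorem \ref{T-main}(i) with $\alphalpha>\tfrac12$) and from the Gaussian factor in $W$, so $\phi\in L^1(P(x;\cdot))$ for every $x\in\mathbb{R}$.
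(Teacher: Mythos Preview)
Your proof is correct and follows essentially the same approach as the paper: both use a coupling $\gamma$ between $P(x;\cdot)$ and $P(\tilde x;\cdot)$, exploit that $\log\phi(x_1,x_2)=\log u_0(x_1)+\log u_0(x_2)$ is $L$-Lipschitz with respect to the $\ell_1$ norm where $L=\|(\log u_0)'\|_{L^\infty}$, and integrate the resulting pointwise bound against $\gamma$. The only cosmetic difference is that the paper invokes an exact $W_{\infty,1}$-optimal plan while you work with an $\varepsilon$-approximate one and pass to the limit; your added remark on integrability is a welcome clarification that the paper leaves implicit.
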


\begin{proof}
Set $x,\tilde x\in \mathbb{R}$ and define $f:=P(x;\cdot)$, $g:=P(\tilde x;\cdot)$ for simplicity. Assume that $W_{\infty,1}(f,g)<\infty$ (otherwise the inequality is obvious). Indeed, this will always be the case as we prove later in Section \ref{S-contractivity}. Then, consider any $\gamma\in \Gamma(f,g)$ minimizing the $W_{\infty,1}$ transport distance \eqref{E-Wpq-distance} and note that
\begin{align*}
u_1(x)&=\frac{1}{\Vert u_0\bF \Vert_{L^1}}\iint_{\mathbb{R}^2} u_0(x_1)\,u_0(x_2)\,\gamma(dx_1,dx_2,d\tilde x_1,d\tilde x_2)\\
&=\frac{1}{\Vert u_0\bF\Vert_{L^1}}\iint_{\mathbb{R}^2} \exp\bigg(\log u_0(x_1)-\log u_0(\tilde x_1)+\log u_0(x_2)-\log u_0(\tilde x_2)\bigg)\\
&\qquad\qquad\qquad\qquad  \times u_0(\tilde x_1)\,u_0(\tilde x_2)\,\gamma(dx_1,dx_2,d\tilde x_1,d\tilde x_2)\\
&\leq \frac{1}{\Vert u_0 \bF\Vert_{L^1}}\iint_{\mathbb{R}^2} \exp\bigg(\left \Vert \frac{d}{dx}(\log u_0)\right \Vert_{L^\infty}\Vert (x_1,x_2)-(\tilde x_1,\tilde x_2)\Vert_1\bigg)\\
&\qquad\qquad\qquad\qquad  \times \,u_0(\tilde x_1)\,u_0(\tilde x_2)\,\gamma(dx_1,dx_2,d\tilde x_1,d\tilde x_2)\\
&\leq \exp\bigg(\left \Vert \frac{d}{dx}(\log u_0)\right \Vert_{L^\infty}\,W_{\infty,1}(P(x;\cdot),P(\tilde x,\cdot))\bigg)\,u_1(\tilde x),
\end{align*}
where in the next-to-last line we have used the mean value theorem and in the last one we have exploited the fact that $\gamma$ is minimizer. Then, taking logarithm on each side of the above inequality ends the proof.
\end{proof}

\begin{remark}[The choice of $\ell_1$ norm]
We note that Lemma \ref{L-log-estimate} is a particular instance of Proposition \ref{P-nonlinear-dualities} in Appendix \ref{A-intermediate-dualities} which can be recovered by setting $d_1=1$, $d_2=2$, $q=1$ and
$$u(x_1,x_2):=u_0(x_1)\,u_0(x_2),\quad (x_1,x_2)\in \mathbb{R}^2.$$
However, the special choice $q=1$ (that is $\ell_1$ norms) is apparently less clear at this stage since in fact choosing any other $1\leq q\leq \infty$ would be possible in Proposition \ref{P-nonlinear-dualities} and it would yield more generally
\begin{equation}\label{E-log-estimate-general}
\vert \log u_1(x)-\log u_1(\tilde x)\vert\leq 2^{1/{q'}}\left \Vert\frac{d}{dx} (\log u_0)\right\Vert_{L^\infty}\,W_{\infty,q}(P(x;\cdot),P(\tilde x;\cdot)),
\end{equation}
for every $x,\tilde x\in \mathbb{R}$. Here, the metric $W_{\infty,q}$ represents the $L^\infty$ Wasserstein distance associated with the $\ell_q$ norm, {\it cf.} \eqref{E-Wpq-distance}. By the natural relation between $\ell_1$ and $\ell_q$ vector norms, we infer that the above estimate \eqref{E-log-estimate} is sharper than \eqref{E-log-estimate-general}, namely
$$W_{\infty,1}(P(x;\cdot),P(\tilde x;\cdot))\leq 2^{1/q'}W_{\infty,q}(P(x;\cdot),P(\tilde x;\cdot)).$$
Therefore, it is clear that whenever $q>1$ the additional factor $2^{1/{q'}}$ makes the one-step contraction factor in next section non-optimal as compared to the explicit one-step contraction for quadratic selection $m(x)=\frac{\betabeta}{2}|x|^2$, as illustrated in Remark \ref{R-contraction-transition-probability-quadratic} and more detailed later in Remark \ref{R-contraction-l1-from-l2}.
\end{remark}

\subsection{Contraction of the one-step transition probability}
\label{sec:contraction 1-step formal}

The last step of our argument requires showing that the mapping $x\in \mathbb{R}\mapsto P(x;\cdot)\in L^1_+(\mathbb{R}^2)\cap \mathcal{P}(\mathbb{R}^2)$ is a contraction with the space $\mathcal{P}(\mathbb{R}^2)$ endowed with the $W_{\infty,1}$ Wasserstein distance in \eqref{E-Wpq-distance}. Specifically, in the following result we we quantify the exact Lipschitz constant, which will account for the precise contraction factor in Theorem \ref{T-main}(ii).

\begin{lemma}[$W_{\infty,1}$-contraction]\label{L-contraction-transition-probability}
For $P=P(x;x_1,x_2)$ given in \eqref{E-transition-probability} the following inequality holds true
$$W_{\infty,1}(P(x;\cdot),P(\tilde x;\cdot))\leq \frac{2}{1+2\alphalpha}|x-\tilde x|,$$
for every $x,\tilde x\in \mathbb{R}$.
\end{lemma}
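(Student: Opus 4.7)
The idea is to use the $L^2$-Brenier map $T=\nabla\phi$ from $P(x;\cdot)$ to $P(\tilde x;\cdot)$ as a valid (though generally $W_{\infty,1}$-suboptimal) coupling, and to establish the pointwise bound $\|T(y)-y\|_1 \le \frac{2|x-\tilde x|}{1+2\alphalpha}$ via a maximum principle applied to the associated Monge--Amp\`ere equation. The smoothness and strict positivity of both densities, together with the uniform log-concavity enforced by the strongly convex potential $W(x;\cdot)$, permit invoking Caffarelli's regularity theory, so $\phi\in C^2(\mathbb{R}^2)$ and the classical equation
\begin{equation*}
\log\det D^2\phi(y) \;=\; W(\tilde x;T(y)) - W(x;y) + \log\frac{Z(\tilde x)}{Z(x)}
\end{equation*}
holds pointwise. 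Using the duality $\|u\|_1 = \max_{v\in\{\pm 1\}^2} v\cdot u$, the claim reduces to the four scalar estimates $\Psi_v(y) := v\cdot(T(y)-y) \le \frac{2|x-\tilde x|}{1+2\alphalpha}$ for $v\in\{\pm 1\}^2$; by symmetry in $(y_1,y_2)$ and in $(x,\tilde x)$, only $v=(1,1)$ and $v=(1,-1)$ need be analyzed.

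Provisionally assume that $\Psi_v$ attains its supremum at an interior point $y^*\in\mathbb{R}^2$. The first-order condition $\nabla\Psi_v(y^*)=0$ is equivalent to $D^2\phi(y^*)v=v$, i.e.\ $v$ is an eigenvector of $D^2\phi(y^*)$ with eigenvalue $1$. The second-order condition $\partial_v D^2\phi(y^*)\preceq 0$, combined with $(D^2\phi(y^*))^{-1}\succ 0$, forces $\partial_v\log\det D^2\phi(y^*) = \trace\bigl((D^2\phi)^{-1}\partial_v D^2\phi\bigr)(y^*) \le 0$. Differentiating the Monge--Amp\`ere equation along $v$ at $y^*$ and using the eigenvector relation $D^2\phi(y^*)v=v$ yields the master inequality
\begin{equation*}
0 \;\ge\; v\cdot\bigl[\nabla_y W(\tilde x;T(y^*))-\nabla_y W(x;y^*)\bigr].
\end{equation*}
For $v=(1,1)$, the separable structure $\nabla_y W(x;y) = \bigl(-\tfrac12(x-\bar y)+V'(y_1),\,-\tfrac12(x-\bar y)+V'(y_2)\bigr)$ with $\bar y = (y_1+y_2)/2$, together with $\bar T(y^*)-\bar y^* = \Psi_v(y^*)/2$, rewrites the right-hand side above as
\begin{equation*}
(x-\tilde x) + \tfrac12\Psi_v(y^*) + \sum_{i=1}^{2}\bigl[V'(T_i(y^*))-V'(y_i^*)\bigr].
\end{equation*}
A sign analysis at the interior maximum, combined with the $\alphalpha$-convexity of $V$, gives $\sum_i[V'(T_i^*)-V'(y_i^*)]\ge\alphalpha\,\Psi_v(y^*)$, whence $(x-\tilde x)+(\alphalpha+\tfrac12)\Psi_v(y^*)\le 0$, which rearranges to $\Psi_v(y^*) \le \frac{2(\tilde x-x)}{1+2\alphalpha}$. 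For $v=(1,-1)$, the $(x-\bar y)$ contributions cancel in $v\cdot\nabla_y W$, eliminating the $x$-dependence entirely, and $\alphalpha$-convexity of $V$ alone forces $\Psi_v(y^*)\le 0$.

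Since $\Psi_v$ need not attain its supremum on $\mathbb{R}^2$, the maximum principle must be combined with a truncation procedure: prove the estimate first for the truncated problem of Section \ref{S-truncated-problem} on a bounded interval, derive the bound uniformly in the truncation parameter, and pass to the limit by tightness together with the stability of Brenier maps under weak convergence of the source and target measures. The main obstacle is the sign analysis at the interior maximum: when $(T_1^*-y_1^*)$ and $(T_2^*-y_2^*)$ share the sign of $\Psi_v(y^*)$, the inequality $\sum_i[V'(T_i^*)-V'(y_i^*)]\ge\alphalpha\,\Psi_v(y^*)$ follows at once from $\alphalpha$-convexity; the mixed-sign case must be closed by exploiting the full direction-dependent convexity of $W$ in $y$, namely $v^T D_y^2 W\,v \ge 1+2\alphalpha$, which captures not only $V''\ge\alphalpha$ but the additional $+\tfrac12$ eigenvalue contribution coming from the rank-one perturbation $\tfrac14\mathbf{1}\mathbf{1}^T$ in $D_y^2 W$ along the $(1,1)$-direction.
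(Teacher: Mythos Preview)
Your overall strategy coincides with the paper's: bound $W_{\infty,1}$ by the $\ell_1$-displacement of the $L^2$-Brenier map, differentiate the log--Monge--Amp\`ere equation, and apply a maximum principle to the linearized functionals $\Psi_v$; then truncate and pass to the limit. The computations leading to your master inequality are correct. However, there are two genuine gaps.

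\textbf{The mixed-sign case.} Your proposed fix via $v^\top D_y^2W\,v\ge 1+2\alphalpha$ does not work. What you need is a lower bound on $v^\top D_y^2W(\cdot)\,(T-y)$, and when $T-y$ is not componentwise aligned with $v$ there is no inequality of the form $v^\top A\,w\ge c\,v\cdot w$ for $A\succeq cI$ (take $A=\mathrm{diag}(1,2)$, $v=(1,1)$, $w=(1,-1)$). Concretely, in the mixed-sign case $T_2-y_2<0$ one has $V'(T_2)-V'(y_2)\le\alphalpha(T_2-y_2)$, the wrong direction, and no upper bound on $V''$ is available. The paper avoids this entirely by a different ordering of the argument: it first locates the maximizer $z^*$ of $H(z)=\|T(z)-z\|_1$, and \emph{then} selects $v=(\sgn(T_1(z^*)-z_1^*),\sgn(T_2(z^*)-z_2^*))$. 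Since $\tilde H:=\Psi_v$ satisfies $\tilde H\le H$ with equality at $z^*$, the point $z^*$ is also a maximizer of $\Psi_v$, and by construction the signs there match $v$. The $\alphalpha$-convexity of $V$ then applies componentwise. The cases $v=(\pm1,\mp1)$ are not resolved by a bound $\le\frac{2|x-\tilde x|}{1+2\alphalpha}$; rather, they are shown to be \emph{impossible} (they would force $H\equiv0$, hence $x=\tilde x$).

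\textbf{Boundary maxima after truncation.} You correctly note that truncation is required, but you do not address what happens when, on the truncated domain, the maximum of $\Psi_v$ lies on the boundary. The paper handles this with a dedicated argument (its Lemma~\ref{L-properties-maximizers-l1}): truncating to the $\ell_\infty$ ball $\bar Q_R$ (which arises naturally from restricting to $\bar I_R\times\bar I_R$), one invokes the regularity theory of \cite{J-19} for Monge--Amp\`ere on squares to get $T\in C^{2,\delta}(\bar Q_R)$ with each boundary segment mapped to itself. This forces $\partial_{x_1}\psi=0$ on the vertical sides, $\partial_{x_2}\psi=0$ on the horizontal sides, and $\partial_{x_1x_2}\psi=0$ on all of $\partial Q_R$, from which the full first- and second-order optimality conditions for $\tilde H$ are recovered even at boundary maximizers. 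This boundary analysis, and the specific choice of the $\ell_\infty$ (or $\ell_1$) ball rather than the $\ell_2$ ball, is essential and absent from your sketch.
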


Before entering into the details of the proof of the above result, let us note that putting Lemmas \ref{L-log-estimate} and \ref{L-contraction-transition-probability} together automatically implies the following one-step contraction estimate 
\begin{equation}
\left\Vert \dfrac{d}{dx} \left(\log u_1\right)\right\Vert_{L^\infty} \leq \dfrac{2}{1+2\alphalpha} \left\Vert \dfrac{d}{dx} \left(\log u_0\right)\right\Vert_{L^\infty},
\end{equation}
which can be iterated and propagated into \eqref{E-main-one-step-contraction} in Theorem \ref{T-main}(ii) (at generation $n$), thus concluding this section. Nevertheless, we remark that Lemma \ref{L-contraction-transition-probability} is far from straightforward as one typically cannot even ensure that the above $W_{\infty,1}$ distance must be finite because the probability densities $P(x;\cdot)$ and $P(\tilde x;\cdot)$ are supported on the full plane $\mathbb{R}^2$. 

\begin{remark}[Quadratic selection]\label{R-contraction-transition-probability-quadratic}
In the case of quadratic selection $m(x)=\frac{\betabeta}{2}|x|^2$ studied in \cite{CLP-21-arxiv}, we recall from Remark \ref{R-quadratic-selection} that  $\bF=G_{0,\sigma^2}$ with $\sigma^2=\alphalpha^{-1}$. Therefore, one easily obtains that $P(x;\cdot)$ is a bivariate normal distribution $P(x;\cdot)=G_{\mu_x,\Sigma}$ with mean and covariance matrix determined by
$$\mu_x:=\frac{1}{1+2\alphalpha}(x,x),\qquad \Sigma^{-1}:=\left(\begin{array}{cc}
\frac{1}{4}+\alphalpha & \frac{1}{4}\\
\frac{1}{4} & \frac{1}{4} +\alphalpha
\end{array}\right).$$
Since $\Sigma$ is independent of $x$, and thus common to all Gaussians $P(x;\cdot)$, the transport cost just amounts to moving the center $\mu_x$ of $P(x;\cdot)$ to the center $\mu_{\tilde x}$ of $P(\tilde x;\cdot)$. More precisely, the optimal transport map from $P(x;\cdot)$ to $P(\tilde x;\cdot)$ is simply the translation $T(x_1,x_2)=(x_1,x_2)+\mu_{\tilde x}-\mu_x$. Therefore, in this particular explicit case we recover Lemma \ref{L-contraction-transition-probability} (with identity indeed):
$$W_{\infty,1}(P(x;\cdot),P(\tilde x;\cdot))=\Vert \mu_x-\mu_{\tilde x}\Vert_1=\frac{2}{1+2\alphalpha}|x-\tilde x|.$$
\end{remark}

The goal of this section is to prove Lemma \ref{L-contraction-transition-probability}. To alleviate the notation, along this section we fix $x,\tilde x\in \mathbb{R}$ with $x\neq \tilde x$, we name $z:=(x_1,x_2)\in \mathbb{R}^2$, and we set the following notation.

\begin{definition}\label{D-f-g-from-P}
We define $f,g\in L^1_+(\mathbb{R}^2)\cap \mathcal{P}(\mathbb{R}^2)$ given by
$$
f(z):=P(x;z)=\frac{1}{Z}e^{-W(z)},\quad g(z):=P(\tilde x;z)=\frac{1}{\tilde Z}e^{-\tilde W(z)},
$$
where the potentials $W$ and $\tilde W$, and the normalizing constants $Z$ and $\tilde Z$ are set as follows
\begin{align*}
&W(z):= W(x;z)=\frac{1}{2}\left\vert x-\frac{x_1+x_2}{2}\right\vert^2+\bV(x_1)+\bV(x_2),\\
&\tilde W(z):= W(\tilde x;z)=\frac{1}{2}\left\vert \tilde x-\frac{x_1+x_2}{2}\right\vert^2+\bV(x_1)+\bV(x_2),\\
&Z:=Z(x)=\iint_{\mathbb{R}^2} e^{-W(z)}\,dz,\quad \tilde Z:=Z(\tilde x)=\iint_{\mathbb{R}^2} e^{-\tilde W(z)}\,dz.
\end{align*}
\end{definition}

For any transport map $T:\mathbb{R}^2\longrightarrow \mathbb{R}^2$ with $T_\# f=g$, note that a possible strategy in order to estimate the $W_{\infty,1}$ distance is to compute an $L^\infty$ bound for the $\ell_1$ associated displacement, namely,
\begin{equation}\label{E-W1infty-vs-displacement}
W_{\infty,1}(f,g)\leq \Vert\,\Vert T-I\Vert_1\Vert_{L^\infty}.
\end{equation}
Whilst the choice of $T$ is somehow arbitrary at this point, a comfortable one is usually the Brenier map $T:\mathbb{R}^2\longrightarrow \mathbb{R}^2$ from the density $f$ to the density $g$, which is characterized as the unique transport map verifying $T_\# f=g$ and solving the Monge problem \cite{B-91}
$$\iint_{\mathbb{R}^2} \Vert T(z)-z\Vert_2^2\,f(z)\,dz=W_{2,2}^2(f,g),$$
where $W_{2,2}$ is the $L^2$ Wasserstein distance associated with the $\ell_2$ norm of $\mathbb{R}^2$, {\it cf.} \eqref{E-Wpq-distance}. We remark that in many cases we do not lose any generality since the $W_{\infty,1}$ and the uniform bound of the $\ell_1$ displacement of the Brenier map have the same order, as depicted in the example of the translations treated in Remark \ref{R-quadratic-selection}, where the transport cost was indeed identical to the displacement.

Our proof of Lemma \ref{L-contraction-transition-probability} is based on the derivation of a novel $L^\infty$ bound of the $\ell_1$ displacement $\Vert T-I\Vert_1$ associated to the Brenier map $T$ between the densities $f$ and $g$. We derive those bounds by reformulating such a Brenier map as a solution to a Monge-Amp\`ere equation and using a version of {\it Caffarelli's maximum principle} along with the strong log-concavity of our densities. Indeed, by the strong log-concavity of $\bF$ in Theorem \ref{T-main}(i) we have
$$-D^2_{(x_1,x_2)}\log f=-D^2_{(x_1,x_2)}\log g\geq \left(\begin{array}{cc}
\frac{1}{4}+\alphalpha & \frac{1}{4}\\
\frac{1}{4} & \frac{1}{4} +\alphalpha
\end{array}\right)\geq \alphalpha \left(\begin{array}{cc}
1 & 0\\
0 & 1
\end{array}\right),$$
and then $f,g$ are $\alphalpha$-log-concave. The aforementioned strategy was previously applied in {\it Caffarelli's contraction principle} \cite{C-00,C-02,CF-21,CFJ-17} to find Lipschitz bounds of the Brenier map between strongly log-concave probability densities, and also in \cite{FS-21} to obtain $L^\infty$ bounds of the $\ell_2$ displacement associated with the Brenier map between two strongly log-concave densities supported on an Euclidean ball and bounded away from zero on it. We remark that all those results were derived under the Euclidean $\ell_2$ norm of $\mathbb{R}^2$, which is not convenient in our setting in view of the definition \eqref{E-Wpq-distance} of $W_{\infty,1}$. For the $\ell_1$ norm, we obtain new bounds on the Monge-Amp\`ere equation, which are able to find the sharp contraction factor, and which cannot be recovered by interpolation from known $\ell_2$ estimates, see Remark \ref{R-contraction-l1-from-l2}.

For the reader's convenience, we provide below a formal proof of Lemma \ref{L-contraction-transition-probability} under the strong additional assumption that the maximal $\ell_1$ displacement associated with the Brenier map is attained. Whilst true in particular situations ({\it cf.} Remark \ref{R-contraction-transition-probability-quadratic}), unfortunately this hypothesis is not necessarily always true, and thus the rigorous derivation requires further work which we provide in detail in Section \ref{S-contractivity}.

\begin{proof}[Formal proof of Lemma \ref{L-contraction-transition-probability}]
It is well known that the Brenier map $T:\mathbb{R}^2\longrightarrow\mathbb{R}^2$ from $f$ to $g$ take the form $T=\nabla\phi$ for some convex function $\phi:\mathbb{R}^2\longrightarrow\mathbb{R}$. Since $f,g>0$ and $f,g\in C^\infty(\mathbb{R}^2)$, then the regularity results in \cite{C-92-1} imply that $\phi\in C^\infty(\mathbb{R}^2)$. Moreover, the change of variable formula implies
\begin{equation}\label{E-Monge-Ampere}
\det(D^2\phi)=\frac{f}{g\circ \nabla\phi},\quad z\in \mathbb{R}^2.
\end{equation}
As usual we make the change of variables through the displacement potential
\begin{equation}\label{E-potential-psi}
\psi(z):=\phi(z)-\frac{1}{2}\Vert z\Vert_2^2,\quad z\in \mathbb{R}^2.
\end{equation}
In view of the relation \eqref{E-W1infty-vs-displacement}, we note that the core of the proof then reduces to obtaining $L^\infty$ bounds for the $\ell_1$ norm of the displacement of the Brenier map, that is,
\begin{equation}\label{E-displacement-H}
H(z):=\Vert T(z)-z\Vert_1=\Vert \nabla\psi(z)\Vert_1=|\partial_{x_1}\psi(z)|+|\partial_{x_2}\psi(z)|,\quad z\in \mathbb{R}^2.
\end{equation}
We start by restating the Monge-Amp\`ere equation \eqref{E-Monge-Ampere} by taking its logarithm,
\begin{equation}\label{E-Monge-Ampere-log}
\log\det (D^2\psi(z)+I)=\tilde W(\nabla\psi(z)+z)-W(z)+\log\frac{\tilde Z}{Z},\quad z\in \mathbb{R}^2.
\end{equation}
Taking partial derivatives $\partial_{x_k}$ in \eqref{E-Monge-Ampere-log} we have
\begin{equation}\label{E-Monge-Ampere-log-derivative}
{\rm tr}\left((D^2\phi)^{-1}\partial_{x_k} D^2\psi\right)=\nabla \tilde W(\nabla\psi+z)\cdot \partial_{x_k}\nabla\psi+(\nabla \tilde W(\nabla\psi+z)-\nabla W)\cdot e_k,\quad z\in\mathbb{R}^2,
\end{equation}
for $k=1,2$. Let us assume the simpler case that $H$ attains its maximum at some $z^*=(x_1^*,x_2^*)\in \mathbb{R}^2$ (for the general case we refer to Section \ref{S-contractivity}) and let us also define the auxiliary function
\begin{equation}\label{E-displacement-H-auxiliary}
\tilde H(z):=\sgn(\partial_{x_1}\psi(z^*))\,\partial_{x_1}\psi(z)+\sgn(\partial_{x_2}\psi(z^*))\,\partial_{x_2}\psi(z),\quad z\in \mathbb{R}^2.
\end{equation}
Then, $\tilde H$ must also attain its maximum at $z^*$ and it agrees with the maximum of $H$.  In particular, we have the necessary optimality conditions
\begin{equation}\label{E-optimality-conditions-H-tilde}
\nabla\tilde H(z^*)=0,\quad D^2 \tilde H(z^*)\leq 0.
\end{equation}
Now, we perform an appropriate convex combination of  \eqref{E-Monge-Ampere-log-derivative} depending on the signs of $\partial_{x_1}\psi(z^*)$ and $\partial_{x_2}\psi(z^*)$ in order to make the auxiliary function $\tilde H$ in \eqref{E-displacement-H} appear.

\medskip

$\diamond$ {\sc Case 1}:  $\partial_{x_1}\psi(z^*)\geq 0$ and $\partial_{x_2}\psi(z^*)\geq 0$.\\
In this case we have $\tilde{H}:=\partial_{x_1}\psi+\partial_{x_2}\psi.$ Evaluating \eqref{E-Monge-Ampere-log-derivative} at $z^*$ and summing over $k\in \{1,2\}$ we have
\begin{align*}
{\rm tr} ((D^2\phi(z^*))^{-1}D^2 \tilde{H}(z^*))&=\nabla \tilde W(\nabla\psi(z^*)+z^*)\cdot \nabla \tilde{H}(z^*)\\
&+(\nabla \tilde W(\nabla\psi(z^*)+z^*)-\nabla W(z^*))\cdot (1,1).
\end{align*}
By the optimality conditions \eqref{E-optimality-conditions-H-tilde} and since $D^2\phi(z^*)^{-1}$ is positive definite, the term in the left hand side above is non-positive (thus negligible) and we obtain
\begin{align*}
(\nabla \tilde W(\nabla\psi(z^*)+z^*)-\nabla \tilde W(z^*))\cdot (1,1)&\leq \nabla(W-\tilde W)(z^*)\cdot (1,1)=\tilde x-x.
\end{align*}
By expanding the left hand side we obtain
\begin{align*}
(\nabla &\tilde W(\nabla\psi(z^*)+z^*)-\nabla \tilde W(z^*))\cdot (1,1)\\
&=\frac{\partial_{x_1}\psi(z^*)+\partial_{x_2}\psi(z^*)}{2}+\bV'(\partial_{x_1}\psi(z^*)+x_1^*)-\bV'(x_1^*)+\bV'(\partial_{x_2}\psi(z^*)+x_2^*)-\bV'(x_2^*)\\
&\geq \frac{\partial_{x_1}\psi(z^*)+\partial_{x_2}\psi(z^*)}{2}+\alphalpha(\partial_{x_1}\psi(z^*)+\partial_{x_2}\psi(z^*))=\frac{1+2\alphalpha}{2}\tilde{H}(z^*),
\end{align*}
where we have used that in this case $\partial_{x_1}\psi(z^*)\geq 0$ and $\partial_{x_2}\psi(z^*)\geq 0$, along with the $\alphalpha$-convexity of $\bV$. Therefore, we conclude that $\tilde x> x$ and
$$\Vert H\Vert_{L^\infty}=H(z^*)=\tilde{H}(z^*)\leq \frac{2}{1+2\alphalpha}|x-\tilde x|.$$

\medskip

$\diamond$ {\sc Case 2}: $\partial_{x_1}\psi(z^*)< 0$ and $\partial_{x_2}\psi(z^*)<0$.\\
This case follows the same argument as {\sc Case 1}. Indeed, note now that $\tilde{H}=-\partial_{x_1}\psi-\partial_{x_2}\psi$. Then, we sum over $k\in {1,2}$, multiply by $-1$ on \eqref{E-Monge-Ampere-log-derivative} and we obtain 
$$\frac{1+2\alphalpha}{2}\tilde{H}(z^*)\leq x-\tilde x.$$
Hence, in this case we obtain $x>\tilde x$ and we recover
$$\Vert H\Vert_{L^\infty}=H(z^*)=\tilde{H}(z^*)\leq \frac{2}{1+2\alphalpha}|x-\tilde x|.$$

We show below that none of the other two possible cases cannot happen indeed.

\medskip

$\diamond$ {\sc Case 3}: $\partial_{x_1}\psi(z^*)\geq 0$ and $\partial_{x_2}\psi(z^*)<0$.\\
Our goal is to show that this case cannot take place. In this case, we have $\tilde{H}:=\partial_{x_1}\psi-\partial_{x_2}\psi$. Taking the difference of \eqref{E-Monge-Ampere-log-derivative} with $k=1$ and $k=2$ we obtain
\begin{align*}
{\rm tr} ((D^2\phi(z^*))^{-1}D^2 \tilde{H}(z^*))&=\nabla \tilde W(\nabla\psi(z^*)+z^*)\cdot \nabla \tilde{H}(z^*)\\
&+(\nabla \tilde W(\nabla\psi(z^*)+z^*)-\nabla W(z^*))\cdot (1,-1).
\end{align*}
Since $z^*$ is now a maximizer of $\tilde{H}$ we have
\begin{align*}
(\nabla \tilde W(\nabla\psi(z^*)+z^*)-\nabla \tilde W(z^*))\cdot (1,-1)&\leq \nabla(W-\tilde W)(z^*)\cdot (1,-1)=0
\end{align*}
The expansion on the left hand side is now radically different because the above factor $\frac{\partial_{x_1}\psi(z^*)+\partial_{x_2}\psi(z^*)}{2}$ cancels and now we obtain
\begin{align*}
(\nabla &\tilde W(\nabla\psi(z^*)+z^*)-\nabla \tilde W(z^*))\cdot (1,-1)\\
&=\bV'(\partial_{x_1}\psi(z^*)+x_1^*)-\bV'(x_1^*)-\bV'(\partial_{x_2}\psi(z^*)+x_2^*)+\bV'(x_2^*)\\
&\geq \alphalpha(\partial_{x_1}\psi(z^*)-\partial_{x_2}\psi(z^*))=\alphalpha\tilde{H}(z^*),
\end{align*}
which implies $\Vert H\Vert_{L^\infty}=H(z^*)=\tilde{H}(z^*)=0$. This is clearly impossible since otherwise $T(z)=z$ for all $z\in \mathbb{R}^2$, that is, $x=\tilde x$.

\medskip

$\diamond$ {\sc Case 4}: $\partial_{x_1}\psi(z^*)<0$ and $\partial_{x_2}\psi(z^*)\geq 0$.\\
This case cannot happen either thanks to the same argument as in {\sc Case 3} with $\tilde{H}$ replaced by $\tilde{H}=-\partial_{x_1}\psi+\partial_{x_2}\psi$. Then, we omit the proof.
\end{proof}

\subsection{Proof of the one-step contraction property}

With all the above machinery in hand, we are finally in position to prove the one-step contraction property \eqref{E-main-one-step-contraction} in Theorem \ref{T-main}.

\begin{proof}[Proof of Theorem \ref{T-main}(ii)]
Combining Lemmas \ref{L-log-estimate} and \ref{L-contraction-transition-probability} applied to the solution \eqref{E-nonlinear-normalization} of \eqref{E-nonlinear-normalized} we obtain
$$\left\Vert \frac{d}{dx} \left(\log \frac{F_n}{\bF}\right)\right\Vert_{L^\infty}\leq \frac{2}{1+2\alphalpha}\left\Vert \frac{d}{dx}\left(\log \frac{F_{n-1}}{\bF}\right)\right\Vert_{L^\infty},$$
for every $n\in \mathbb{N}$, and this amounts to \eqref{E-main-one-step-contraction}.
\end{proof}

\section{Main contractivity lemma}\label{S-contractivity}

In this section, we provide a rigorous proof of Lemma \ref{L-contraction-transition-probability}, where the {\it a priori} assumption that the maximal displacement associated with the Brenier map must be attained is no longer required. To do so, we shall argue by deriving a local version of the Lemma valid for strongly log-concave densities $f$ and $g$ compactly supported on an appropriate domain and bounded away from zero on it. More specifically, we propose to adapt the contribution of the maximum  principle to the formal argument above (Section \ref{sec:contraction 1-step formal}) to compact domains. However, since the maximum may be attained at the boundary, the boundary information is crucial in order to infer information from the non-linear elliptic PDE \eqref{E-Monge-Ampere} and therefore the choice of the domain cannot be made arbitrarily.

\subsection{Maximum principle under the $\ell_2$ norm}

Our starting point is the following result inspired by \cite{FS-21}, which underlines that the appropriate domain must be an Euclidean ball if one quantifies the displacement of the Brenier map in terms of $\ell_2$ norms of $\mathbb{R}^2$ or, more generally, $\mathbb{R}^d$.

\begin{proposition}[Maximum principle on $\ell_2$ balls]\label{P-maximum-principle-l2}
Consider two densities $f=e^{-W}$, $g=e^{-\tilde W}$ in $L^1_+(\mathbb{R}^d)\cap \mathcal{P}(\mathbb{R}^d)$ that are $\gamma$-log-concave for some $\gamma>0$. Assume that,
$$\{z\in \mathbb{R}^d:\,f(z)>0\}=\{z\in \mathbb{R}^d:\,g(z)>0\}=\bar B_R,$$
where $B_R$ is the Euclidean ball ({\it cf.} \eqref{E-BR-QR-balls}), and suppose that $f,g\in C^{1,\delta}(\bar B_R)$ for some $\delta>0$. Then, the Brenier map $T=\nabla\phi:\bar B_R\longrightarrow \bar B_R$ from $f$ to $g$ verifies
$$\Vert\,\Vert T-I\Vert_2\Vert_{L^\infty(\bar B_R)}\leq \frac{1}{\gamma}\Vert \,\Vert \nabla (W-\tilde W)\Vert_2\Vert_{L^\infty (\bar B_R)}.$$
\end{proposition}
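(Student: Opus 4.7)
The proof follows the strategy of \cite{FS-21}: apply a maximum principle to the Monge--Amp\`ere equation satisfied by the Brenier map $T=\nabla\phi$, using the $\gamma$-log-concavity of $g$ as the source of coercivity. The first step is a regularity one. Since $\bar B_R$ is uniformly convex and $f,g\in C^{1,\delta}(\bar B_R)$ are bounded away from $0$ and $+\infty$ (by compactness and positivity), Caffarelli's global regularity theory \cite{C-96} guarantees that $\phi\in C^{2,\delta}(\bar B_R)$ and that $T$ is a $C^{1,\delta}$-diffeomorphism of $\bar B_R$ onto itself. In particular, $T(\partial B_R)=\partial B_R$, which will be the key boundary information exploited below.

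I would then introduce the displacement potential $\psi(z):=\phi(z)-\tfrac{1}{2}\Vert z\Vert_2^2$ and the scalar auxiliary function $h(z):=\tfrac{1}{2}\Vert\nabla\psi(z)\Vert_2^2$. Starting from the logarithmic Monge--Amp\`ere identity $\log\det(I+D^2\psi)=\tilde W(z+\nabla\psi)-W(z)$, differentiating once in each coordinate and contracting the result against $\nabla\psi$, a direct computation produces the differential inequality
$$Lh-\nabla\tilde W(T)\cdot\nabla h\;\geq\;2\gamma\,h\,-\,\Vert\nabla\psi\Vert_2\,\Vert\nabla(W-\tilde W)\Vert_2\quad\text{on}\ B_R,$$
where $L:=\sum_{i,j}(D^2\phi)^{-1}_{ij}\partial_{ij}$ denotes the linearized Monge--Amp\`ere operator. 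The coercive term $2\gamma h$ comes from the $\gamma$-log-concavity of $g$ via the integrated Taylor identity $\nabla\tilde W(T)-\nabla\tilde W(z)=\bigl(\int_0^1 D^2\tilde W(z+t\nabla\psi)\,dt\bigr)\nabla\psi$ together with the matrix bound $\int_0^1 D^2\tilde W(z+t\nabla\psi)\,dt\geq\gamma I$; the remaining Hilbert--Schmidt squared terms produced by the computation are non-negative and discarded. Now suppose that $h$ attains its maximum over $\bar B_R$ at an interior point $z^*\in B_R$. Then $\nabla h(z^*)=0$ and $Lh(z^*)\leq 0$, because $(D^2\phi)^{-1}$ is positive definite while $D^2h(z^*)$ is negative semidefinite. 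Evaluating the above inequality at $z^*$ therefore yields $\gamma\,\Vert\nabla\psi(z^*)\Vert_2\leq\Vert\nabla(W-\tilde W)(z^*)\Vert_2$, which is exactly the announced bound since $\nabla\psi=T-I$.

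The main obstacle is the boundary case, namely when $h$ attains its maximum at some $z^*\in\partial B_R$: the sign condition $Lh(z^*)\leq 0$ is no longer free, and one must supply a boundary-geometric argument. The crucial input is that $T$ preserves the sphere, so $\Vert T(z^*)\Vert_2=R=\Vert z^*\Vert_2$; this forces the orthogonality $(T(z^*)-z^*)\perp(T(z^*)+z^*)$ and, in particular, $\nabla\psi(z^*)\cdot z^*\leq 0$, that is, the displacement vector points inward. Coupling this geometric constraint with the Hopf-type boundary inequalities satisfied at the maximum, namely $\partial_n h(z^*)\geq 0$ and the non-positivity of the tangential Hessian of $h$ along $\partial B_R$, and then decomposing $Lh(z^*)$ into its normal and tangential parts using the uniform convexity of $\partial B_R$, one can still force the same estimate $\gamma\,\Vert\nabla\psi(z^*)\Vert_2\leq\Vert\nabla(W-\tilde W)(z^*)\Vert_2$. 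This boundary analysis is the most delicate part of the proof and is precisely the reason why the domain has to be a Euclidean ball and not a general convex set: both the uniform convexity and the rotational symmetry of the sphere are needed to compensate for the loss of the free interior sign $Lh\leq 0$.
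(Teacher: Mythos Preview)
Your interior argument is essentially the paper's: differentiate the log Monge--Amp\`ere identity, contract against $\nabla\psi$, and use the first- and second-order optimality conditions at the maximizer together with the $\gamma$-convexity of $\tilde W$. One technical slip: you assert $\phi\in C^{2,\delta}(\bar B_R)$, but your computation of $Lh$ requires third derivatives of $\phi$; the paper correctly invokes $\phi\in C^{3,\delta}(\bar B_R)$ from Caffarelli's boundary regularity.

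The substantive gap is your treatment of the boundary case. The paper does not argue at the boundary at all: it invokes \cite[Lemma~3.1]{FS-21}, which shows directly that the maximum of $H=\tfrac12\|\nabla\psi\|_2^2$ \emph{cannot} be attained on $\partial B_R$, so the boundary case is vacuous. Your proposal instead attempts a Hopf-type decomposition of $Lh(z^*)$ into normal and tangential parts, claiming that uniform convexity of the sphere ``still forces the same estimate''. This is not a proof. The tangential Hessian of $h$ is indeed nonpositive at a boundary maximum, and $\partial_n h(z^*)\ge 0$, but $Lh(z^*)$ also involves the normal-normal second derivative of $h$, over which you have no sign control; nor is it clear how the inward-pointing condition $\nabla\psi(z^*)\cdot z^*\le 0$ interacts with the coefficients $(D^2\phi)^{-1}$ to close the argument. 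As written, the boundary step is an assertion, not an argument. Either supply the actual computation, or---much simpler---cite \cite[Lemma~3.1]{FS-21} to exclude the boundary case outright, which is what the paper does.
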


\begin{proof}
Since $B_R$ is a uniformly convex domain, the classical regularity result by {\sc L. Caffarelli} for the Monge-Amp\`ere equation up to the boundary applies \cite{C-92-2, C-96}, and therefore $\phi\in C^{3,\delta}(\bar B_R)$. Again, we define the displacement potential $\psi(z)=\phi(z)-\frac{1}{2}\Vert z\Vert_2^2$ and the displacement function quantified in $\ell_2$ norms
$$H(z):=\frac{1}{2}\Vert T(z)-z \Vert_2^2=\frac{1}{2}\sum_{i=1}^d (\partial_{z_i}\psi(z))^2,\quad z\in \bar B_R.$$
Note that $H\in C^2(\bar B_R)$, and in particular it must attain its maximum at some point $z^*\in \bar B_R$. By \cite[Lemma 3.1]{FS-21} we have that $z^*\notin \partial B_R$, therefore it is an interior point and we obtain the necessary optimality conditions $\nabla H(z^*)=0$ and $D^2 H(z^*)\leq 0$. Since $\phi$ must solve the Monge-Amp\`ere equation
$$\det (D^2\phi)=\frac{f}{g\circ \nabla\phi},\quad z\in \bar B_R,$$
then we can argue similarly to the above formal proof of Lemma \ref{L-contraction-transition-probability} and obtain \eqref{E-Monge-Ampere-log-derivative}, that is,
$$
{\rm tr}\left((D^2\phi)^{-1}\partial_{z_k} D^2\psi\right)=\nabla \tilde W(\nabla\psi+z)\cdot \partial_{z_k}\nabla\psi+(\nabla \tilde W(\nabla\psi+z)-\nabla W)\cdot e_k,\quad z\in\bar B_R,
$$
for any $k=1,2,\ldots,d$. Multiplying the above by $\partial_{z_k}\psi$ and summing over $k\in \{1,2,\ldots,d\}$ we obtain
\begin{equation}\label{E-maximum-principle-l2-norm}
\sum_{k=1}^d{\rm tr}\left((D^2\phi)^{-1}\partial_{z_k} D^2\psi\right)\,\partial_{z_k}\psi =\sum_{k=1}^d\nabla \tilde W(\nabla\psi+z)\cdot \partial_{z_k}\nabla\psi\,\partial_{z_k}\psi+\left(\nabla \tilde W(\nabla\psi+z)-\nabla W\right)\cdot \nabla\psi.
\end{equation}
By inspection we obtain
\begin{align*}
\sum_k{\rm tr}\left((D^2\phi)^{-1}\partial_{z_k} D^2\psi\right)\,\partial_{z_k}\psi&={\rm tr}\left((D^2\phi)^{-1}D^2H\right)-\sum_k(\partial_{z_k}\nabla\psi)^\top\cdot (D^2\phi)^{-1}\cdot \partial_{z_k}\nabla\psi,\\
\sum_k\nabla \tilde W(\nabla\psi+z)\cdot \partial_{z_k}\nabla\psi\,\partial_{z_k}\psi&=\nabla \tilde W(\nabla\psi+z)\cdot \nabla H\\
\left(\nabla \tilde W(\nabla\psi+z)-\nabla W\right)\cdot \nabla\psi&=\left(\nabla \tilde W(\nabla\psi+z)-\nabla \tilde W\right)\cdot \nabla\psi+(\nabla \tilde W-\nabla W)\cdot \nabla\psi.
\end{align*}
Injecting the above expressions into the identity \eqref{E-maximum-principle-l2-norm}, evaluating at the maximizer $z^*$, and using the optimality conditions for $z^*$  along with the fact that $D^2\phi(z^*)^{-1}$ is positive definite, we obtain
\begin{equation}\label{E-maximum-principle-l2-norm-at-z*}
(\nabla \tilde W(\nabla\psi(z^*)+z^*)-\nabla \tilde W(z^*))\cdot \nabla\psi(z^*)\leq \nabla (W-\tilde W)(z^*)\cdot \nabla \psi(z^*).
\end{equation}
Note that $\tilde W$ is $\gamma$-convex, which by \eqref{E-maximum-principle-l2-norm-at-z*} and the Cauchy-Schwarz inequality infers
$$\gamma\Vert \nabla\psi(z^*)\Vert_2^2\leq \Vert\,\Vert \nabla (W-\tilde W)\Vert_2\Vert_{L^\infty(\bar B_R)}\Vert\nabla\psi(z^*)\Vert_2,$$
and this ends the proof.
\end{proof}

Whilst not directly applicable to $f,g$ given as in Definition \ref{D-f-g-from-P}, we may apply it to their truncations to any Euclidean ball.

\begin{definition}[Truncation to $\bar B_R$]\label{D-f-g-from-P-truncation-l2}
For the probability densities $f,g\in L^1_+(\mathbb{R}^2)\cap \mathcal{P}(\mathbb{R}^2)$ given in Definition \ref{D-f-g-from-P}, we define their truncations to the Euclidean ball $B_R$ ({\it cf.} \eqref{E-BR-QR-balls}) as follows
\begin{align*}
\begin{aligned}
f_R(z)&:=\frac{1}{Z_{R}}e^{-W_R(z)}, & g_R(z)&:=\frac{1}{\tilde Z_{R}}e^{-\tilde W_R(z)},\\
W_R(z)&:=W(z)+\chi_{\bar B_R}(z), & \tilde W_R(z)&:=\tilde W(z)+\chi_{\bar B_R}(z),\\
Z_R&:=\int_{\mathbb{R}^2}e^{-W_R(z)}\,dz, & \tilde Z_R&:=\int_{\mathbb{R}^2}e^{-\tilde W_R(z)}\,dz,
\end{aligned}
\end{align*}
for any $R>0$. Here, $\chi_A:\mathbb{R}^d\longrightarrow (-\infty,+\infty]$ denotes the characteristic function of convex analysis associated with any set $A\subset \mathbb{R}^d$, that is, 
$$\chi_A(z):=\left\{\begin{array}{ll}
0, & \mbox{if}\quad z\in A,\\
+\infty, & \mbox{if}\quad z\in \mathbb{R}^d\setminus A.
\end{array}\right.$$ 
\end{definition} 

\begin{remark}[Application to our case study]\label{R-contraction-l1-from-l2}
Note that the truncated densities $f_R,g_R\in C^\infty(\bar B_R)$ in Definition \ref{D-f-g-from-P-truncation-l2} are compactly supported on $\bar B_R$ and bounded away from zero on it, and in addition,
$$D^2W(x_1,x_2)=D^2 \tilde W(x_1,x_2)=\left(\begin{array}{cc}
\frac{1}{4}+\bV''(x_1) & \frac{1}{4}\\
\frac{1}{4} & \frac{1}{4}+\bV''(x_2)
\end{array}\right)\geq \left(\begin{array}{cc}
\alphalpha	& 0\\
0 & \alphalpha
\end{array}\right).$$
Hence, $f_R$ and $g_R$ are $\alphalpha$-log-concave, and Proposition \ref{P-maximum-principle-l2} can be applied, thus yielding a uniform bound over $\bar B_R$ of the displacement associated with the Brenier map $T_R:\bar B_R\longrightarrow \bar B_R$ sending $f_R$ to $g_R$. Namely,
$$\Vert\Vert T_R-I\Vert_2\Vert_{L^\infty(\bar B_R)}\leq \frac{1}{\alphalpha}\Vert\,\Vert\nabla (W-\tilde W)\Vert_2\Vert_{L^\infty(\bar B_R)}=\frac{1}{\alphalpha}\left\Vert \frac{1}{2}(\tilde x-x,\tilde x-x)\right\Vert_2=\frac{1}{\sqrt{2}\alphalpha}|x-\tilde x|,$$
and by interpolation we have
$$W_{\infty,1}(f_R,g_R)\leq \sqrt{2}\,W_{\infty,2}(f_R,g_R)\leq \sqrt{2}\,\Vert\Vert T_R-I\Vert_2\Vert_{L^\infty(\bar B_R)}\leq \frac{1}{\alphalpha}|x-\tilde x|.$$
In particular, we note that such an estimate only provides contraction as long as $\alphalpha>1$ and, in addition, the contraction factor is worse that the one claimed in Lemma \ref{L-contraction-transition-probability} as depicted in Figure \ref{fig:rates-contraction-l1-vs-l2}.
\end{remark}

\begin{figure}
\begin{tikzpicture}[scale=0.9]
\begin{axis}[
  axis x line=middle, axis y line=middle,
  label/.style={at={(current axis.right of origin)},anchor=west},
  xmin=0, xmax=5, xtick={0,0.5,1,2,3,4,5}, xlabel=$\alphalpha$,
  ymin=0, ymax=2.25, ytick={0,0.25,0.5,0.75,1,1.25,1.5,1.75,2},
]
\addplot [
    domain=0.5:5, 
    samples=100, 
    color=blue,
    line width=0.4mm,
]{2/(1+2*x)};
\addlegendentry{$\frac{2}{1+2\alphalpha}$}
\addplot [
    domain=0.5:5, 
    samples=100, 
    color=red,
    line width=0.4mm,
]{1/x};
\addlegendentry{$\frac{1}{\alphalpha}$}
\addplot [
    domain=0.5:5, 
    samples=100, 
    color=red,
    line width=0.4mm,
]{1/x};
\addplot [
    domain=0:5, 
    samples=100, 
    color=black,
    dashed,
    line width=0.3mm,
]{1};
\addplot[
	domain=0:5,
	samples=50,
	color=black,
	dashed,
	line width=0.3mm	
] coordinates {(0.5,0)(0.5,2.5)};
\addplot[
	domain=0:5,
	samples=50,
	color=black,
	dashed,
	line width=0.3mm	
] coordinates {(0,2)(0.5,2)};
\addplot [mark=*, mark size=1.5pt] coordinates {(0.5,1)};
\addplot [mark=*, mark size=1.5pt] coordinates {(0.5,2)};
\addplot [mark=*, mark size=1.5pt] coordinates {(1,1)};
\end{axis}
\end{tikzpicture}
\caption{Comparison of the theoretical contraction factor $\frac{1}{1+2\alphalpha}$ in Lemma \ref{L-contraction-transition-probability}, and the contraction factor $\frac{1}{\alphalpha}$ obtained in Remark \ref{R-contraction-l1-from-l2} as an application of Proposition \ref{P-maximum-principle-l2} and an interpolation of $\ell_1$ norms by $\ell_2$ norms of $\mathbb{R}^2$.}
\label{fig:rates-contraction-l1-vs-l2}
\end{figure}
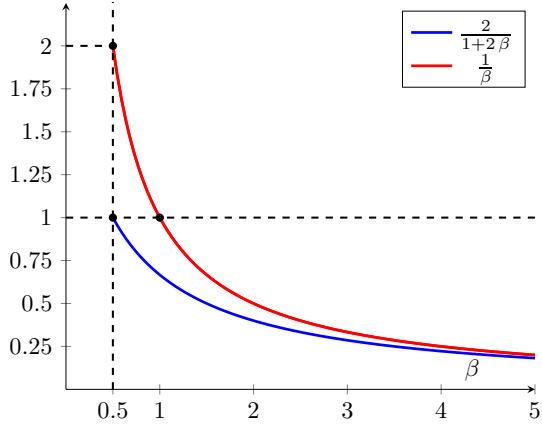

In the following section, we propose to correct such a defect, which appears to be due to the misuse of $\ell_2$ norms in the maximum principle of Proposition \ref{P-maximum-principle-l2}. More specifically, we propose a novel version of the maximum principle which is better adapted to $\ell_1$ norms of the displacement.

\subsection{Maximum principle under the $\ell_1$ norm}

A fundamental step of the proof of Proposition \ref{P-maximum-principle-l2} was the observation made in \cite[Lemma 3.1]{FS-21}. Specifically, for $f$ and $g$ compactly supported on a $\ell_2$ ball and bounded away from zero on it, the maximal $\ell_2$ displacement of the Brenier map must be attained at some interior point in the ball. Apparently, the use of $\ell_2$ norms to quantify the size of the displacement proved extremely well suited in order to control the boundary information on $\ell_2$ balls. Interestingly, in the sequel we show that in order to find precise information about the maximizers for the $\ell_1$ displacement, we need our densities $f$ and $g$ to be supported over $\ell_\infty$ balls $\bar B_R$ ({\it cf.} \eqref{E-BR-QR-balls}). This is the content of the following

\begin{lemma}[Maximizers in the $\ell_1$ setting]\label{L-properties-maximizers-l1}
Consider two densities $f,g\in L^1_+(\mathbb{R}^2)\cap \mathcal{P}(\mathbb{R}^2)$, assume that,
$$\{z\in \mathbb{R}^2:\,f(z)>0\}=\{z\in \mathbb{R}^2:\,g(z)>0\}=\bar Q_R,$$
where $Q_R$ is the $\ell_\infty$ ball ({\it cf.} \eqref{E-BR-QR-balls}), and suppose that $f,g\in C^{1,\delta}(\bar B_R)$ for some $\delta>0$. Let $T=\nabla\phi:\bar B_R\longrightarrow \bar B_R$ be the Brenier map from $f$ to $g$, define the displacement potential $\psi(z):=\phi(z)-\frac{1}{2}\Vert z\Vert_2^2$ and the displacement function quantified in $\ell_1$ norm
\begin{equation}\label{E-displacement-H-local}
H(z):=\Vert T(z)-z\Vert_1=|\partial_{x_1}\psi(z)|+|\partial_{x_2}\psi(z)|,\quad z\in \bar Q_R,
\end{equation}
Then, $T\in C^{2,\delta}(\bar Q_R)$ and we have  the optimality conditions
\begin{equation}\label{E-optimality-conditions-H-tilde-local}
\nabla \tilde H(z^*)=0,\quad D^2 \tilde H(z^*)\leq 0,
\end{equation}
for any maximizer $z^*=(z_1^*,z_2^*)\in \bar Q_R$ of $H$, where $\tilde{H}$ is the auxiliary function
\begin{equation}\label{E-displacement-H-auxiliary-local}
\tilde H(z):=\sgn(\partial_{x_1}\psi(z^*))\,\partial_{x_1}\psi(z)+\sgn(\partial_{x_2}\psi(z^*))\,\partial_{x_2}\psi(z),\quad z\in \bar Q_R.
\end{equation}
\end{lemma}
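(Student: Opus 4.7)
The plan is to first upgrade the Brenier potential $\phi$ to be $C^{2,\delta}$ up to the boundary of $\bar Q_R$, then reduce the non-smooth optimization of $H$ to the smooth auxiliary function $\tilde H$, and finally extract the optimality conditions by distinguishing whether $z^*$ is an interior or a boundary maximizer.

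\textbf{Regularity up to the boundary.} The square $\bar Q_R$ is convex but not uniformly convex, so the classical Caffarelli theory does not apply directly. Instead, I would invoke the two-dimensional boundary regularity results of \cite{J-19} for Brenier maps on symmetric convex polytopes; combined with the assumed positivity and $C^{1,\delta}$ regularity of $f,g$ on $\bar Q_R$, this yields $\phi\in C^{3,\delta}(\bar Q_R)$, hence $T=\nabla\phi\in C^{2,\delta}(\bar Q_R)$ and in particular $\tilde H\in C^2(\bar Q_R)$. As noted in the introduction, this step is the central obstruction to any higher-dimensional extension.

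\textbf{Reduction to a smooth function.} Setting $s_i:=\sgn(\partial_{x_i}\psi(z^*))$, the inequality $|s_i|\leq 1$ together with the pointwise bound $s_i\,\partial_{x_i}\psi(z)\leq|\partial_{x_i}\psi(z)|$ gives
\[
\tilde H(z)=s_1\,\partial_{x_1}\psi(z)+s_2\,\partial_{x_2}\psi(z)\leq |\partial_{x_1}\psi(z)|+|\partial_{x_2}\psi(z)|=H(z),
\]
with equality at $z^*$ (since $s_i\,\partial_{x_i}\psi(z^*)=|\partial_{x_i}\psi(z^*)|$ in all cases, including when the partial vanishes and $s_i=0$). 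Therefore $z^*$ is also a maximizer of the smooth function $\tilde H$ over $\bar Q_R$, and \eqref{E-optimality-conditions-H-tilde-local} reduces to first- and second-order necessary conditions for the constrained maximum of $\tilde H$ at $z^*$.

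\textbf{Interior versus boundary case.} If $z^*\in\mathrm{int}(Q_R)$, the conclusion is the standard unconstrained calculus for the $C^2$ function $\tilde H$. The delicate case is $z^*\in\partial Q_R$: here I exploit the fact that the Brenier map between densities supported on the same square $\bar Q_R$ sends each edge of $\partial Q_R$ into itself (a consequence of the matching of supports, continuity of $T$ up to the boundary, and the monotonicity of $T=\nabla\phi$ applied to the polytope structure in two dimensions). Concretely, on the right edge $\{R\}\times[-R,R]$ one has $T_1\equiv R$, hence $\partial_{x_1}\psi\equiv 0$ along this edge, which forces $s_1=0$ and reduces $\tilde H$ to the purely tangential expression $\tilde H(z)=s_2\,\partial_{x_2}\psi(z)$. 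The tangential extremality of $\tilde H$ along the edge yields $\partial_{x_2}\tilde H(z^*)=0$, while the symmetry of $D^2\phi$ together with the constancy of $T_1\equiv R$ along the edge yields
\[
\partial_{x_1}\tilde H(z^*)=s_2\,\partial_{x_1}\partial_{x_2}\phi(z^*)=s_2\,\partial_{x_2}T_1(z^*)=0,
\]
so $\nabla\tilde H(z^*)=0$. For the Hessian, the admissible inward tangent cone at $z^*$ is the closed half-plane $\{v\in\mathbb{R}^2:v_1\leq 0\}$; expanding $\tilde H(z^*+tv)\leq\tilde H(z^*)$ to second order and taking the supremum of the quadratic form $v^\top D^2\tilde H(z^*)v$ over this half-plane (via the angular parametrization $v=(-\cos\theta,\sin\theta)$), a direct trigonometric computation shows that non-positivity over the half-plane is equivalent to $D^2\tilde H(z^*)\leq 0$ as a symmetric matrix. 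The argument is symmetric on the other three edges. At a corner of $\bar Q_R$, the analogue applies to both adjacent edges so that $s_1=s_2=0$ and $\tilde H\equiv 0$, which forces the degenerate situation $H(z^*)=0$ (i.e.\ $T\equiv I$) where the conclusion holds trivially.

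\textbf{Main obstacle.} The hardest ingredient is the boundary regularity in the first step, since $\partial Q_R$ is only piecewise flat and the classical Caffarelli maximum principle needs uniform convexity; the key input is the 2D theory of \cite{J-19} for symmetric polytopes. A secondary technical point is verifying that the Brenier map preserves each edge of $\partial Q_R$, which is the crucial geometric fact allowing the normal component of $\nabla\psi$ to vanish at boundary maximizers.
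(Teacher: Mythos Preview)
Your proposal is correct and follows essentially the same approach as the paper: regularity up to the boundary via \cite{J-19}, the reduction $\tilde H\leq H$ with equality at $z^*$, the edge-preservation of $T$ forcing the normal component of $\nabla\psi$ to vanish on each segment, and the crossed-derivative identity $\partial_{x_1x_2}\psi=0$ on $\partial Q_R$ to kill the remaining component of $\nabla\tilde H(z^*)$. Your Hessian argument (non-positivity of an even quadratic form on a half-plane implies non-positivity on all of $\mathbb{R}^2$) makes explicit what the paper states tersely as ``those $z^*$ can be approached by interior points from any direction''; this is the same observation, and both presentations rely on having first established $\nabla\tilde H(z^*)=0$.
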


In contrast with Proposition \ref{P-maximum-principle-l2}, $Q_R$ is not uniformly convex. Then, the regularity theory of the Monge-Amp\`ere equation is not directly applicable in full generality. Specifically, since $f,g\in C^{1,\delta}(\bar Q_R)$ are bounded away from zero on $\bar Q_R$, then $T\in C^{0,\delta}(\bar Q_R)$ by \cite{C-92-2}. However, the lack of uniform convexity may prevent the full elliptic regularity \cite{C-96}, which claims that $T$ is a diffeomorphism of class $C^{2,\delta}(\bar Q_R)$. Fortunately, we can proceed as in \cite[Theorem 3.3]{J-19} which, thanks to a clever symmetrization argument around each corner of $Q_R$ and the classical interior regularity in \cite{C-92-1}, shows that $T$ is indeed a diffeomorphism of class $C^{2,\delta}(\bar Q_R)$. Moreover, it fixes the corners and sends each segment of the boundary to itself. This guarantees in particular that $\tilde H\in C^2(\bar Q_R)$ and the optimality conditions above make sense, as shown below. 

\begin{proof}[Proof of Lemma \ref{L-properties-maximizers-l1}]
We remark that $z^*\in \bar Q_R$ must also be a maximizer of $\tilde H$ since we have
$$\tilde H(z)\leq H(z)\leq H(z^*)=\tilde H(z^*),$$
for every $z^*\in \bar Q_R$ by the definition of $H$ and $\tilde H$ in \eqref{E-displacement-H-local} and \eqref{E-displacement-H-auxiliary-local}. Since the maximizer $z^*$ may lie in principle in all $\bar Q_R$, two possible options arise, either $z^*\in Q_R$ or $z^*\in \partial Q_R$. In the first case, the usual optimality conditions at interior points yield \eqref{E-optimality-conditions-H-tilde-local}. In the second case, namely $z^*\in \partial Q_R$, note that the result is trivial if $z^*$ is one of the four corners since those are fixed points of $T$ and therefore $\tilde{H}\equiv 0$. Hence, here on we will assume that $z^*\in \partial Q_R$ is not at a corner, but it lies in the interior of some of the four segments. Note that at those points we only have to prove that $\nabla\tilde{H}(z^*)=0$. In fact, we remark that those $z^*$ can be approached by interior points from any direction, and then the above readily implies the second order optimality condition $D^2 \tilde{H}(z^*)\leq 0$. To show that $\nabla \tilde H(z^*)=0$, note that the boundary $\partial Q_R$ contains four segments:
\begin{align*}
S_1^+&:=\{(x_1,x_2)\in \mathbb{R}^2:\,x_1=R,\,x_2\in [-R,R]\},\\
S_1^-&:=\{(x_1,x_2)\in \mathbb{R}^2:\,x_1=-R,\,x_2\in [-R,R]\},\\
S_2^+&:=\{(x_1,x_2)\in \mathbb{R}^2:\,x_1\in [-R,R],\,x_2=R\},\\
S_2^-&:=\{(x_1,x_2)\in \mathbb{R}^2:\,x_1\in [-R,R],\,x_2=-R\}.
\end{align*}
Since $T(\partial Q_R)=\partial Q_R$ and each segment is mapped to itself, then we have the following information
\begin{align}
\partial_{x_1}\psi(z)&=0,\qquad \mbox{if }z\in S_1^+\cup S_1^-,\label{E-transport-segments-1}\\
\partial_{x_2}\psi(z)&=0,\qquad \mbox{if }z\in S_2^+\cup S_2^-.\label{E-transport-segments-2}
\end{align}
By differentiation it is clear that we also have
\begin{equation}\label{E-transport-boundary-crossed-derivative}
\partial_{x_1 x_2}\psi(z)=0,\qquad \mbox{if }z\in \partial Q_R.
\end{equation}
Now, we argue according to the four possible segments of $\partial Q_R$ that $z^*$ may belong to. 

\medskip

$\diamond$ {\sc Case 1}: $z^*\in S_1^+\cup S_1^-$.\\
In this case, by \eqref{E-transport-segments-1} we have $\partial_{x_1}\psi(z^*)=0$ and therefore we have 
$$\tilde{H}(z)=\sgn(\partial_{x_2}\psi(z^*))\,\partial_{x_2}\psi(z),\quad z\in \bar Q_R.$$
Since $z^*$ is a maximizer of $\tilde H$, then there exist $\lambda\in \mathbb{R}$ (indeed $\lambda\geq 0$ if $z^*\in S_1^+$ and $\lambda\leq 0$ if $z^*\in S_1^-$) such that its gradient at $z^*$ equals the multiple $\lambda (1,0)$ of the outer normal vector, that is, 
$$\nabla\tilde H(z^*)=\sgn(\partial_{x_2}\psi(z^*))\left(\begin{array}{c} \partial_{x_1x_2}\psi(z^*)\\ \partial_{x_2x_2}\psi(z^*)\end{array}\right)=\left(\begin{array}{c} \lambda \\ 0\end{array}\right).$$
This implies that the second component of the gradient must vanish, but the first one also vanishes by the condition \eqref{E-transport-boundary-crossed-derivative} on the crossed derivative. Then, we have $\nabla \tilde{H}(z^*)=0$.

\medskip

$\diamond$ {\sc Case 2}: $z^*\in S_2^+\cup S_2^-$.\\
In this case, by \eqref{E-transport-segments-2} we have $\partial_{x_2}\psi(z^*)=0$ and therefore we have 
$$\tilde{H}(z)=\sgn(\partial_{x_1}\psi(z^*))\,\partial_{x_1}\psi(z),\quad z\in \bar Q_R.$$
Since $z^*$ is a maximizer of $\tilde H$, then there exist $\lambda\in \mathbb{R}$ (indeed $\lambda\geq 0$ if $z^*\in S_2^+$ and $\lambda\leq 0$ if $z^*\in S_2^-$) such that its gradient at $z^*$ equals the multiple $\lambda (0,1)$ of the outer normal vector, that is, 
$$\nabla\tilde H(z^*)=\sgn(\partial_{x_1}\psi(z^*))\left(\begin{array}{c} \partial_{x_1x_1}\psi(z^*)\\ \partial_{x_1x_2}\psi(z^*)\end{array}\right)=\left(\begin{array}{c} 0 \\ \lambda\end{array}\right).$$
This implies that the first component of the gradient must vanish, but the second one also vanishes by the condition \eqref{E-transport-boundary-crossed-derivative} on the crossed derivative. Then, we have $\nabla \tilde{H}(z^*)=0$.
\end{proof}

We remark that the unique formal point of the sketch of the proof of Lemma \ref{L-contraction-transition-probability} in Section \ref{S-sketch-one-step-contraction} which could break down is the fact that for the global densities $f$ and $g$ in Definition \ref{D-f-g-from-P} the $\ell_1$ displacement of their Brenier map does not attain its maximum necessarily. In particular, we may be deprived from the optimality condition \eqref{E-optimality-conditions-H-tilde}, which was crucially used throughout the maximum-type principle sketched in Section \ref{S-sketch-one-step-contraction}. However, Lemma \ref{L-properties-maximizers-l1} does guarantee that the maximum must be attained and the optimality conditions \eqref{E-optimality-conditions-H-tilde-local} must hold if $f$ and $g$ are replaced by similar truncated densities over $\ell_\infty$ balls. In fact, the result does not exploit the special potential $\bV$ of the eigenfunction $\bF$ in Definition \ref{D-f-g-from-P} for $f,g$, but it can actually be replaced by any strongly convex function supported on $\bar Q_R$.

\begin{lemma}[Maximum principle on $\ell_\infty$ balls]\label{L-maximum-principle-l1}
For any $\gamma$-convex potential $V\in C^{1,\delta}_{\rm loc}(\mathbb{R})$ with $\gamma>0$, any $x,\tilde x\in \mathbb{R}$ with $x\neq \tilde x$, and any $R>0$ we define $f,g\in L^1_+(\mathbb{R}^d)\cap \mathcal{P}(\mathbb{R}^2)$ given by
$$f(z)=\frac{1}{Z}e^{-W(z)},\quad g(z)=\frac{1}{\tilde Z}e^{-\tilde W(z)},\quad z\in \mathbb{R}^2,$$
where the potentials $W$ and $\tilde W$, and the normalizing constants $Z$ and $\tilde Z$ are set as follows
\begin{align*}
&W(z):=\frac{1}{2}\left\vert x-\frac{x_1+x_2}{2}\right\vert^2+V(x_1)+V(x_2)+\chi_{\bar Q_R}(z),\\
&\tilde W(z):=\frac{1}{2}\left\vert \tilde x-\frac{x_1+x_2}{2}\right\vert^2+V(x_1)+V(x_2)+\chi_{\bar Q_R}(z),\\
&Z:=\iint_{\mathbb{R}^2}e^{-W(z)}\,dz,\quad \tilde Z:=\iint_{\mathbb{R}^2} e^{-\tilde W(z)}\,dz.
\end{align*}
Then, the Brenier map $T=\nabla\phi:\bar Q_R\longrightarrow \bar Q_R$ from $f$ to $g$ verifies
$$\Vert\,\Vert T-I\Vert_1\Vert_{L^\infty(\bar Q_R)}\leq \frac{2}{1+2\gamma}|x-\tilde x|.$$
\end{lemma}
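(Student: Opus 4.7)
The plan is to adapt the formal four-case argument of Lemma \ref{L-contraction-transition-probability} to the truncated setting, where compactness of $\bar Q_R$ and Lemma \ref{L-properties-maximizers-l1} together rescue the step that was assumed (existence of a maximizer at which the optimality conditions hold). Since $V\in C^{1,\delta}_{\rm loc}$ and the quadratic term is smooth, the densities $f,g$ belong to $C^{1,\delta}(\bar Q_R)$ and are bounded away from zero on $\bar Q_R$. Hence the hypotheses of Lemma \ref{L-properties-maximizers-l1} are met, and the Brenier map $T=\nabla\phi:\bar Q_R\to\bar Q_R$ lies in $C^{2,\delta}(\bar Q_R)$ with $\phi$ satisfying the Monge-Amp\`ere equation \eqref{E-Monge-Ampere} in $\bar Q_R$. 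Setting $\psi(z)=\phi(z)-\tfrac12\|z\|_2^2$ and $H(z)=\|\nabla\psi(z)\|_1$, the continuous function $H$ attains its maximum at some $z^*\in\bar Q_R$, and Lemma \ref{L-properties-maximizers-l1} yields the optimality conditions $\nabla\tilde H(z^*)=0$ and $D^2\tilde H(z^*)\leq 0$ for the auxiliary signed function $\tilde H$ built from the signs $\varepsilon_k:=\sgn(\partial_{x_k}\psi(z^*))$, $k=1,2$.

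Next I would repeat verbatim the differentiation of $\log\det(D^2\phi)$ carried out in the sketch, obtaining for every $z\in\bar Q_R$ and $k\in\{1,2\}$ the identity
\[
\mathrm{tr}\bigl((D^2\phi)^{-1}\partial_{x_k}D^2\psi\bigr)=\nabla\tilde W(\nabla\psi+z)\cdot\partial_{x_k}\nabla\psi+\bigl(\nabla\tilde W(\nabla\psi+z)-\nabla W(z)\bigr)\cdot e_k.
\]
I would then evaluate at $z^*$ and take the signed linear combination $\sum_k\varepsilon_k(\cdot)$; by definition of $\tilde H$ and the optimality conditions together with positive-definiteness of $(D^2\phi(z^*))^{-1}$, the left-hand side becomes non-positive, and the first term on the right vanishes. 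Using $\nabla(W-\tilde W)(z^*)=-\tfrac12(x-\tilde x)(1,1)$ this leaves the clean inequality
\[
\bigl(\nabla\tilde W(\nabla\psi(z^*)+z^*)-\nabla\tilde W(z^*)\bigr)\cdot(\varepsilon_1,\varepsilon_2)\leq \tfrac12(\tilde x-x)(\varepsilon_1+\varepsilon_2).
\]

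Now I would split according to the four sign combinations. In the two diagonal cases $\varepsilon_1=\varepsilon_2=\pm 1$, expanding the left-hand side produces the quadratic contribution $\tfrac12(\partial_{x_1}\psi(z^*)+\partial_{x_2}\psi(z^*))(\varepsilon_1+\varepsilon_2)/1$ from the $W_2$-part of $\tilde W$, exactly equal to $\tfrac12 H(z^*)(\varepsilon_1+\varepsilon_2)/(\varepsilon_1+\varepsilon_2)\cdot(\varepsilon_1+\varepsilon_2)=H(z^*)$ after absorbing signs; together with the $\gamma$-convexity of $V$ applied to each coordinate this gives $\tfrac12(1+2\gamma)H(z^*)\leq|x-\tilde x|$, i.e.\ the desired bound. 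In the two off-diagonal cases $\varepsilon_1=-\varepsilon_2$, the quadratic contribution cancels and the right-hand side vanishes, so $\gamma$-convexity of $V$ yields $\gamma H(z^*)\leq 0$; hence $H\equiv 0$ on $\bar Q_R$, forcing $T=I$ and $f=g$, which is incompatible with $x\neq\tilde x$. Combining the remaining cases gives $\|H\|_{L^\infty(\bar Q_R)}=H(z^*)\leq \tfrac{2}{1+2\gamma}|x-\tilde x|$, as claimed.

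The main obstacle I expect is not the algebra, which mirrors the formal sketch, but the boundary regularity needed to legitimately use the pointwise Monge-Amp\`ere identity and the second-order optimality at a boundary maximizer in the non-uniformly convex domain $\bar Q_R$; this is precisely what Lemma \ref{L-properties-maximizers-l1} (invoking \cite{J-19,C-92-1}) is designed to supply, by ensuring $T\in C^{2,\delta}(\bar Q_R)$, that $T$ fixes the corners and sends each edge to itself, and that $\nabla\tilde H(z^*)=0$ even when $z^*$ lies on an edge.
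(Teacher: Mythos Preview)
Your proposal is correct and follows exactly the approach the paper indicates: combine the optimality conditions supplied by Lemma \ref{L-properties-maximizers-l1} (which handles the boundary regularity issue on the non-uniformly convex domain $\bar Q_R$) with the four-case computation from the formal proof of Lemma \ref{L-contraction-transition-probability}, replacing $\alphalpha$ by the generic $\gamma$. The paper in fact omits the proof entirely, remarking only that it follows the formal sketch together with Lemma \ref{L-properties-maximizers-l1}; your write-up makes that explicit, though the algebraic bookkeeping in the diagonal case could be stated more cleanly.
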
 

As explained above, we omit the proof since it follows the formal proof of Lemma \ref{L-contraction-transition-probability} in Section \ref{S-sketch-one-step-contraction} and the optimality conditions in Lemma \ref{L-properties-maximizers-l1}. In particular, by setting $V=\bV$ and $\gamma=\alphalpha$ we have that Lemma \ref{L-maximum-principle-l1} is directly applicable to the truncations to $\bar Q_R$ of the densities $f,g$ in Definition \ref{D-f-g-from-P}.

\begin{definition}[Truncation to $\bar Q_R$]\label{D-f-g-from-P-truncation-l1}
For the probability densities $f,g\in L^1_+(\mathbb{R}^2)\cap \mathcal{P}(\mathbb{R}^2)$ given in Definition \ref{D-f-g-from-P}, we define their truncations to the $\ell_\infty$ ball $\bar Q_R$ ({\it cf.} \eqref{E-BR-QR-balls}) as follows
\begin{align*}
\begin{aligned}
f_R(z)&:=\frac{1}{Z_{R}}e^{-W_R(z)}, & g_R(z)&:=\frac{1}{\tilde Z_{R}}e^{-\tilde W_R(z)},\\
W_R(z)&:=W(z)+\chi_{\bar Q_R}(z), & \tilde W_R(z)&:=\tilde W(z)+\chi_{\bar Q_R}(z),\\
Z_R&:=\int_{\mathbb{R}^2}e^{-W_R(z)}\,dz, & \tilde Z_R&:=\int_{\mathbb{R}^2}e^{-\tilde W_R(z)}\,dz,
\end{aligned}
\end{align*}
for any $R>0$.
\end{definition}

Then, we are in position to rigorously prove Lemma \ref{L-contraction-transition-probability} by taking limits $R\rightarrow \infty$ and noting that Lemma \ref{L-maximum-principle-l1} yields a uniform bound of the displacement independent on $R$.

\begin{proof}[Rigorous proof of Lemma \ref{L-contraction-transition-probability}]
Consider $f$ and $g$ given in Definition \ref{D-f-g-from-P} and set the associated Brenier map $T:\mathbb{R}^2\longrightarrow\mathbb{R}^2$ from $f$ to $g$. Similarly, we consider the family of truncations $f_R$ and $g_R$ in Definition \ref{D-f-g-from-P-truncation-l1} and we set the associated Brenier maps $T_R:\mathbb{R}^2\longrightarrow \mathbb{R}^2$. By the above Lemma \ref{L-maximum-principle-l1} we have
\begin{equation}\label{E-maximum-principle-l1-truncation}
\Vert\,\Vert T_R-I\Vert_1\Vert_{L^\infty(\bar Q_R)}\leq \frac{2}{1+2\alphalpha}|x-\tilde x|,
\end{equation}
for every $R>0$. We set the optimal transference plans $\gamma\in \Gamma_o(f,g)$ and $\gamma_R\in \Gamma_o(f_R,g_R)$ associated with the $W_{2,2}$ distance, which are known to be supported on the graph of the above Brenier maps, {\it i.e.},
$$
\gamma:=(I,T)_{\#}f,\quad \gamma_R:=(I,T_R)_{\#}f_R.
$$
Since $W$ and $\tilde W$ are $\alphalpha$-convex, we have the enough integrability on $f$ and $g$ to ensure that $f,g\in \mathcal{P}_2(\mathbb{R}^2)$. Hence, the dominated convergence theorem applies and we have indeed
$$f_R\rightarrow f,\quad g_R\rightarrow f\quad \mbox{in}\quad (\mathcal{P}_2(\mathbb{R}^2),W_{2,2}).$$
By stability of optimal transference plans, the sequence $\gamma_R$ must converge narrowly to some optimal transference plan (up to a subsequence), see \cite[Proposition 7.1.3]{AGS-08}. Since the unique optimal transference plan between $f$ and $g$ is precisely the above $\gamma$ supported on the graph of $T$, then we obtain
$$\gamma_R\rightarrow \gamma\quad \mbox{narrowly in }\mathcal{P}(\mathbb{R}^2).$$
Now we use the Kuratowski convergence of the supports under the narrow convergence of measures, see \cite[Proposition 5.1.8]{AGS-08}. Namely, consider any $z\in \mathbb{R}^2$. Since $(z,T(z))\in \supp \gamma$, then there exists $(z^R,w^R)\in \supp \gamma_R$ such that $(z^R,w^R)\rightarrow (z,T(z))$. Since $\gamma_R$ is supported on the graph of $T_R$ then $z^R\in \bar Q_R$ and $w^R=T_R(z^R)$. In particular, we have $T_R(z^R)-z^R\rightarrow T(z)-z$ as $R\rightarrow \infty$ and by the above uniform bound \eqref{E-maximum-principle-l1-truncation} the same bound is preserved in the limit, that is,
$$W_{\infty,1}(f,g)\leq \Vert \,\Vert T-I\Vert_1\Vert_{L^\infty}\leq \frac{2}{1+2\alphalpha}|x-\tilde x|.$$
\end{proof}

\begin{remark}[Replacing $\ell_\infty$ balls by $\ell_1$ balls]
We note that in Lemmas \ref{L-properties-maximizers-l1} and \ref{L-maximum-principle-l1} the choice of $\ell_\infty$ balls may seem to be crucial. However, that choice is not essential and we refer to Appendix \ref{A-also-l1-balls} for an alternative version of those Lemmas \ref{L-maximum-principle-l1} for densities compactly supported over $\ell_1$ balls and bounded away from zero on them.
\end{remark}

\section{Analysis of a truncated problem}\label{S-truncated-problem}

In this part, we study an auxiliary version of the original time marching problem \eqref{E-nonlinear} restricted to the bounded interval $I_R:=(-R,R)$ with $R>0$, namely,
\begin{equation}\label{E-nonlinear-truncated}
F_n^R=\mathcal{T}_R[F_{n-1}^R],\quad n\in \mathbb{N},\,x\in \mathbb{R}.
\end{equation}
Here, we truncate the selection function $m_R$ as follows
\begin{equation}\label{E-selection-function-truncated}
m_R(x):=m(x)+\chi_{\bar I_R}(x),\quad x\in \mathbb{R},
\end{equation}
so that the truncated integral operator $\mathcal{T}_R$ takes the form
\begin{equation}\label{E-operator-T-truncated}
\mathcal{T}_R[F](x):=e^{-m_R(x)}\iint_{\mathbb{R}^2} G\left(x-\frac{x_1+x_2}{2}\right)\,F(x_1)\,\frac{F(x_2)}{\Vert F\Vert_{L^1}}\,dx_1\,dx_2,\quad x\in \mathbb{R}.
\end{equation}
Again, solutions of the form $F_n^R(x)=(\lambda^R)^n\,F^R(x)$ come as eigenpairs of the non-linear eigenproblem
\begin{align}\label{E-nonlinear-eigenproblem-truncated}
\begin{aligned}
&\lambda^R F^R=\mathcal{T}_R[F^R],\quad x\in \mathbb{R},\\
& F^R\geq 0,\quad \int_{\mathbb{R}} F^R(x)\,dx=1.
\end{aligned}
\end{align}
The goal of this section is to derive an analogous truncated version of Theorem \ref{T-main}. More specifically, we study: 1) Existence of a unique strongly log-concave solution $(\blambda^R,\bF^R)$ to \eqref{E-nonlinear-eigenproblem-truncated}, and 2) Quantitative relaxation of the solutions to \eqref{E-nonlinear-truncated} towards the quasi-equilibrium $(\blambda^R)^n\bF^R$.

\begin{theorem}[Truncated problem]\label{T-main-truncated}
Consider any $m\in C^2(\mathbb{R})$ verifying \eqref{H-main-m-1}-\eqref{H-main-m-2} in Theorem \ref{T-main}. Set any $R>0$ and define the truncation $m_R$ according to \eqref{E-selection-function-truncated}. Then, the following statements hold true:
\begin{enumerate}[label=(\roman*)]
\item {\rm{(\bf Existence of quasi-equilibrium})}\\
There is a unique solution $(\blambda^R,\bF^R)$ to \eqref{E-nonlinear-eigenproblem-truncated}. In addition, $\bF^R=e^{-\bV^R}\in L^1_+(\mathbb{R})\cap C^\infty(\bar I_R)$ is compactly supported on $\bar I_R$ and bounded away from zero on it and $\alphalpha$-log-concave with parameter $\alphalpha>0$ given in \eqref{E-main-alpha} in Theorem \ref{T-main}.
\item {\rm ({\bf One-step contraction})}\\
Consider any $F_0^R\in L^1_+(\mathbb{R})\cap C^1(\bar I_R)$ compactly supported on $\bar I_R$ and bounded away from zero on it, and let $\{F_n^R\}_{n\in \mathbb{N}}$ be the solution to \eqref{E-nonlinear-truncated} issued at $F_0^R$. Then, we have
$$
\left\Vert\frac{d}{dx} \left(\log \frac{F_n^R}{\bF^R}\right)\right\Vert_{L^\infty(\bar I_R)}\leq \frac{2}{1+2\alphalpha}\,\left\Vert \frac{d}{dx} \left(\log \frac{F_{n-1}^R}{\bF^R}\right)\right\Vert_{L^\infty(\bar I_R)},
$$
for any $n\in \mathbb{N}$.
\item {\rm ({\bf Asynchronous exponential growth})}\\
Consider any $F_0^R\in L^1_+(\mathbb{R})\cap C^1(\bar I_R)$ compactly supported on $\bar I_R$ and bounded away from zero on it, and let $\{F_n^R\}_{n\in \mathbb{N}}$ be the solution to \eqref{E-nonlinear-truncated} issued at $F_0^R$. Then, we have
\begin{align*}
\left\vert\frac{\Vert F_n^R\Vert_{L^1}}{\Vert F_{n-1}^R\Vert_{L^1}}-\blambda^R\right\vert&\leq C_R\left(\frac{2}{1+2\alphalpha}\right)^n,\\
\left\Vert\frac{F_n^R}{\Vert F_n^R\Vert_{L^1}}-\bF^R\right\Vert_{C^1}&\leq C_R'\left(\frac{2}{1+2\alphalpha}\right)^n,
\end{align*}
for any $n\in \mathbb{N}$ and some constants $C_R,C_R'$ depending on $R$ and $F_0^R$.
\end{enumerate}
\end{theorem}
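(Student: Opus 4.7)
The plan is to mirror the structure of the main Theorem \ref{T-main}, exploiting the simplification that on the truncated problem the transition probability $P_R(x;\cdot)$ is automatically supported on $\bar Q_R=\bar I_R\times \bar I_R$, so that Lemma \ref{L-maximum-principle-l1} applies directly without the limiting argument used in the rigorous proof of Lemma \ref{L-contraction-transition-probability}. The genuinely new ingredient is the existence of the quasi-equilibrium in part (i), which I plan to obtain via a compactness argument on the convex cone of strongly log-concave densities on $\bar I_R$.

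For part (i), I would introduce the normalised operator $\mathcal{S}_R[F]:=\mathcal{T}_R[F]/\Vert \mathcal{T}_R[F]\Vert_{L^1}$ and work on
\[
K_R:=\{F\in \mathcal{P}(\bar I_R):\,F\mbox{ is }\alphalpha\mbox{-log-concave}\},
\]
where $\alphalpha$ is the sharp fixed point from \eqref{E-main-alpha}. Lemma \ref{L-stability-log-concavity} applied to $\mathcal{T}_R$ (the extra $\chi_{\bar I_R}$ is convex and preserves log-concavity), together with the identity $\alphalpha=\betabeta+\frac{2\alphalpha}{1+2\alphalpha}$, ensures $\mathcal{S}_R(K_R)\subset K_R$. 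The set $K_R$ is convex, and compact in $C^0(\bar I_R)$: indeed, $V=-\log F$ is $\alphalpha$-convex on $\bar I_R$, and the unit-mass constraint together with the quadratic growth of $V$ around its minimiser yields uniform $L^\infty$ bounds and equicontinuity. The operator $\mathcal{S}_R$ is continuous on $K_R$ (using the Gaussian smoothing in $\mathcal{B}$), and $\Vert \mathcal{T}_R[F]\Vert_{L^1}$ is bounded below uniformly on $K_R$, since $\mathcal{B}[F]>0$ pointwise on $\mathbb{R}$ for every $F\in K_R$ and $e^{-m}\geq e^{-\max_{\bar I_R}m}>0$ on $\bar I_R$. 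Schauder's fixed point theorem then yields $\bF^R\in K_R$ with $\blambda^R=\Vert\mathcal{T}_R[\bF^R]\Vert_{L^1}>0$. Smoothness and strict positivity of $\bF^R$ on $\bar I_R$ follow by a bootstrap on the eigenrelation $\blambda^R\bF^R = e^{-m}\mathcal{B}[\bF^R]$, since $\mathcal{B}[\bF^R]$ is $C^\infty$. Uniqueness will follow from (ii): any other solution $(\tilde\blambda^R,\tilde\bF^R)$ gives a stationary orbit, to which (ii) applied iteratively forces $\Vert\partial_x\log(\tilde\bF^R/\bF^R)\Vert_{L^\infty(\bar I_R)}=0$, hence $\tilde\bF^R=\bF^R$ by unit mass, and then $\tilde\blambda^R=\blambda^R$.

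Parts (ii) and (iii) then follow by essentially repeating the arguments of Sections \ref{S-sketch-one-step-contraction}--\ref{S-contractivity}. For (ii), I would introduce $u_n^R:=F_n^R/((\blambda^R)^n\bF^R)$ and the associated transition kernel $P_R(x;x_1,x_2)$, which is supported on $\bar Q_R$. Lemma \ref{L-log-estimate} applied to the truncated kernel yields $\vert \log u_n^R(x)-\log u_n^R(\tilde x)\vert \leq \Vert\partial_x\log u_{n-1}^R\Vert_{L^\infty(\bar I_R)}\,W_{\infty,1}(P_R(x;\cdot),P_R(\tilde x;\cdot))$, and Lemma \ref{L-maximum-principle-l1} applied with $V$ any $\alphalpha$-convex extension of $\bV^R$ from $\bar I_R$ to $\mathbb{R}$ (the extension is irrelevant since $P_R$ vanishes off $\bar Q_R$) delivers the contraction factor $2/(1+2\alphalpha)$. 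For (iii), by (ii) the Lipschitz constant of $\log u_n^R$ on $\bar I_R$ decays as $\kappa^n L_0$ with $\kappa=2/(1+2\alphalpha)$; combined with the diameter $2R$ of $\bar I_R$, this forces $u_n^R$ to become uniformly close to a constant at rate $\kappa^n$. Writing $\tilde F_n^R:=F_n^R/\Vert F_n^R\Vert_{L^1}=\bF^R u_n^R/\int \bF^R u_n^R$ and using the uniform decay of $\partial_x\log u_n^R$ yields $C^1$ convergence of $\tilde F_n^R$ to $\bF^R$ at rate $\kappa^n$. The norm ratio is handled via the $1$-homogeneity of $\mathcal{T}_R$: $\Vert F_n^R\Vert_{L^1}/\Vert F_{n-1}^R\Vert_{L^1}=\Vert\mathcal{T}_R[\tilde F_{n-1}^R]\Vert_{L^1}$, which depends continuously on $\tilde F_{n-1}^R$ and converges to $\blambda^R=\Vert\mathcal{T}_R[\bF^R]\Vert_{L^1}$ at rate $\kappa^{n-1}$.

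The main obstacle is part (i). The contractive arguments come essentially for free from the machinery of Section \ref{S-contractivity}, whereas the fixed point step requires verifying compactness of $K_R$ in a topology compatible with the continuity of $\mathcal{S}_R$, and producing a uniform lower bound for $\Vert\mathcal{T}_R[F]\Vert_{L^1}$ on $K_R$. Both rely crucially on the rigidity that strong log-concavity imposes on densities supported on a bounded interval, and constitute the technical heart of the truncated existence statement.
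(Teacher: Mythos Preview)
Your treatment of parts (ii) and (iii) is essentially the paper's: normalise by the equilibrium, invoke Lemma \ref{L-log-estimate} and Lemma \ref{L-maximum-principle-l1} directly on $\bar Q_R$, and read off $C^1$ convergence of the normalised profiles and of the mass ratio. That part is fine.

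The gap is in (i). Your set $K_R=\{F\in\mathcal{P}(\bar I_R):F\text{ is }\alphalpha\text{-log-concave}\}$ is \emph{not} convex, so Schauder does not apply. A convex combination of two $\alphalpha$-log-concave densities need not be log-concave at all: take two Gaussians of variance $1/\alphalpha$ with well-separated means, truncated to $\bar I_R$; each lies in $K_R$, but their average is bimodal. Log-concavity is preserved under products and convolutions, not under linear interpolation of densities. You could try to rescue the argument by working at the level of the potentials $V=-\log F$, where $\alphalpha$-convexity \emph{is} a convex constraint, but then the normalisation $\int e^{-V}=1$ is a nonlinear constraint and the map $V\mapsto -\log\mathcal{S}_R[e^{-V}]$ is no longer obviously continuous into a convex compact set; this would require a genuinely different setup.

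The paper avoids any topological fixed-point theorem. It first proves a Cauchy-type estimate (Lemma \ref{L-Cauchy-condition}): for a strongly log-concave initial datum, the \emph{successive} quotients satisfy
\[
\left\Vert\dfrac{d}{dx}\log\frac{F_n^R}{F_{n-1}^R}\right\Vert_{L^\infty(\bar I_R)}\leq \frac{2}{1+2\alphalpha_{n-2}}\left\Vert\dfrac{d}{dx}\log\frac{F_{n-1}^R}{F_{n-2}^R}\right\Vert_{L^\infty(\bar I_R)},
\]
by running the same duality/maximum-principle machinery with $F_{n-2}^R$ playing the role of the reference density (its $\alphalpha_{n-2}$-log-concavity is all that Lemma \ref{L-maximum-principle-l1} needs). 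Summing the geometric series shows $\{(V_n^R)'\}$ is Cauchy in $C(\bar I_R)$, and a short argument produces $C^1$ convergence of $F_n^R/\Vert F_n^R\Vert_{L^1}$ and convergence of the mass ratio; the limit is the eigenpair. Uniqueness then follows from (ii), as you correctly argue. In short: the contractivity you already have is strong enough to \emph{construct} the fixed point directly, with no need for Schauder and no convexity hypothesis on the invariant set.
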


As we show below, our proof exploits the overarching local contraction Lemma \ref{L-maximum-principle-l1} to answer simultaneously both questions. More specifically, our main observation is the following type of contraction which holds true providing that the initial data $F_0^R$ is strongly log-concave.

\begin{lemma}[Cauchy-type property]\label{L-Cauchy-condition} 
Let $m\in C^2(\mathbb{R})$ satisfy \eqref{H-main-m-1}-\eqref{H-main-m-2} in Theorem \ref{T-main}. Consider a $\alphalpha_0$ log-concave density $F_0^R\in L^1_+(\mathbb{R})\cap C^{1,\delta}(\bar I_R)$ with $\alphalpha_0>0$ and $0<\delta<1$, compactly supported on $\bar I_R$ and bounded away from zero on it. Let $\{F_n^R\}_{n\in \mathbb{N}}$ be the solution to \eqref{E-nonlinear-truncated} issued at $F_0^R$. Then, we have
$$\left\Vert\frac{d}{dx} \left(\log \frac{F_n^R}{F_{n-1}^R}\right)\right\Vert_{L^\infty(\bar I_R)}\leq \frac{2}{1+2\alphalpha_{n-2}}\left\Vert \frac{d}{dx}  \left(\log \frac{F_{n-1}^R}{F_{n-2}^R}\right)\right\Vert_{L^\infty(\bar I_R)},\quad n\geq 2,$$
where the sequence $\{\alphalpha_n\}_{n\in \mathbb{N}}$ is defined by recurrence like in \eqref{E-alphan}.
\end{lemma}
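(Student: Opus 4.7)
The plan is to mirror the one-step contraction strategy of Theorem \ref{T-main}(ii), but comparing the ratio of \emph{consecutive} iterates $F_n^R/F_{n-1}^R$ rather than the ratio $F_n^R/\bF^R$ with an as-yet undefined equilibrium. The key point is that the structural identities underlying Lemmas \ref{L-log-estimate} and \ref{L-contraction-transition-probability} hold with $\bF$ replaced by \emph{any} strongly log-concave, positive, compactly supported density on $\bar I_R$; in particular, they hold with $\bF$ replaced by $F_{n-2}^R$ and this replacement is precisely what produces the factor $\frac{2}{1+2\alphalpha_{n-2}}$.

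First, I would verify by induction on $n$ that each $F_n^R$ is smooth on $\bar I_R$, strictly positive on $\bar I_R$ and vanishing outside, and $\alphalpha_n$-log-concave with $\alphalpha_n$ obeying the recurrence \eqref{E-alphan}. Smoothness up to the boundary follows from the Gaussian smoothing of $G \ast \bar{F}_{n-1}^R \ast \bar{F}_{n-1}^R$ together with the smoothness of $m$; strict positivity on $\bar I_R$ is immediate since $G>0$ everywhere forces the double convolution to be strictly positive, and multiplication by $e^{-m_R}$ preserves this on $\bar I_R$ while truncating the support; and the propagation of log-concavity with parameter $\alphalpha_n$ is Lemma \ref{L-stability-log-concavity}, whose proof transfers verbatim to $\mathcal{T}_R$ because multiplication by $\chi_{\bar I_R}$ only adds a convex indicator to the potential.

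Next, setting $u_n(x) := F_n^R(x)/F_{n-1}^R(x)$ on $\bar I_R$, a direct computation from \eqref{E-nonlinear-truncated}--\eqref{E-operator-T-truncated} yields
\begin{equation*}
u_n(x) = \frac{\Vert F_{n-2}^R\Vert_{L^1}}{\Vert F_{n-1}^R\Vert_{L^1}} \iint_{\bar Q_R} P_{n-2}(x;x_1,x_2)\, u_{n-1}(x_1)\, u_{n-1}(x_2)\, dx_1\, dx_2,
\end{equation*}
where $P_{n-2}(x;\cdot)$ is the probability density on $\bar Q_R = \bar I_R \times \bar I_R$ defined as in \eqref{E-transition-probability} but with $\bF$ replaced by $F_{n-2}^R$. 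Since the prefactor $\Vert F_{n-2}^R\Vert_{L^1}/\Vert F_{n-1}^R\Vert_{L^1}$ is independent of $x$, it cancels in every log-difference, so the proof of Lemma \ref{L-log-estimate} transfers verbatim to give
\begin{equation*}
|\log u_n(x) - \log u_n(\tilde x)| \leq \left\Vert \tfrac{d}{dx}\log u_{n-1}\right\Vert_{L^\infty(\bar I_R)} W_{\infty,1}\bigl(P_{n-2}(x;\cdot),P_{n-2}(\tilde x;\cdot)\bigr),
\end{equation*}
for every $x,\tilde x \in \bar I_R$, the $W_{\infty,1}$-distance being finite because both measures are supported on the common compact $\bar Q_R$.

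Finally, the densities $P_{n-2}(x;\cdot)$ and $P_{n-2}(\tilde x;\cdot)$ fit exactly into the framework of Lemma \ref{L-maximum-principle-l1} with $V$ taken to be $-\log F_{n-2}^R$ on $\bar I_R$, extended smoothly and $\alphalpha_{n-2}$-convexly outside $\bar I_R$ (an extension which is irrelevant since both densities already vanish outside $\bar Q_R$). Lemma \ref{L-maximum-principle-l1} then delivers
\begin{equation*}
W_{\infty,1}\bigl(P_{n-2}(x;\cdot),P_{n-2}(\tilde x;\cdot)\bigr) \leq \frac{2}{1+2\alphalpha_{n-2}}\,|x-\tilde x|,
\end{equation*}
and combining with the previous estimate, dividing by $|x-\tilde x|$, and taking the sup in $x,\tilde x\in \bar I_R$ yields the claim. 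The main bookkeeping issue I anticipate is ensuring that the iterative estimates on positivity, boundary regularity and log-concavity parameter of $F_{n-2}^R$ are sufficient to legitimately invoke Lemma \ref{L-maximum-principle-l1}, but these properties are all direct consequences of the inductive setup described in the first step.
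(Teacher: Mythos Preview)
Your proposal is correct and follows essentially the same approach as the paper: define $u_n = F_n^R/F_{n-1}^R$, derive the representation through the transition kernel built from $F_{n-2}^R$, apply the $L^\infty$ Kantorovich-type duality (Lemma \ref{L-log-estimate}) to bound $|\log u_n(x)-\log u_n(\tilde x)|$, and then invoke the $\ell_\infty$-ball maximum principle (Lemma \ref{L-maximum-principle-l1}) with $V=-\log F_{n-2}^R$ and $\gamma=\alphalpha_{n-2}$. Your additional inductive bookkeeping on smoothness, positivity, and propagation of the log-concavity parameter $\alphalpha_n$ is a welcome clarification but not a departure from the paper's argument.
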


\begin{proof}
For any $n\in \mathbb{N}$, we define
$$u_n^R(x):=\frac{F_n^R(x)}{F_{n-1}^R(x)},\quad x\in \bar I_R,$$
and note that, arguing as in \eqref{E-nonlinear-normalization}, we have that $\{u_n\}_{n\in \mathbb{N}}$ must solve the following analogue of \eqref{E-nonlinear-normalized}:
$$u_n^R(x)=\frac{\Vert F_{n-2}^R\Vert_{L^1}}{\Vert F_{n-1}^R\Vert_{L^1}}\iint_{\bar Q_R}P_n^R(x;x_1,x_2)\,u_{n-1}^R(x_1)\,u_{n-1}^R(x_2)\,dx_1\,dx_2,$$
for any $x\in \bar I_R$ and $n\geq 2$. We remark that the system above holds only on $\bar I_R$ and the one-step transition probability $P_n^R(x;\cdot)\in L^1_+(\bar Q_R)\cap \mathcal{P}(\bar Q_R)$ is not time-homogeneous but it depends explicitly on $n$, namely
\begin{align*}
&P_n^R(x;x_1,x_2):=\frac{1}{Z_n^R(x)}e^{-W_n^R(x;x_1,x_2)},\quad x\in \bar I_R,\quad (x_1,x_2)\in \bar Q_R,\\
&W_n^R(x;x_1,x_2):=\frac{1}{2}\left\vert x-\frac{x_1+x_2}{2}\right\vert^2+V_{n-2}^R(x_1)+V_{n-2}^R(x_2),\\
&Z_n^R(x):=\iint_{\bar Q_R}e^{-W_n^R(x;x_1,x_2)}\,dx_1\,dx_2,
\end{align*}
where we denote $V_n^R:\bar I_R\longrightarrow \mathbb{R}$ so that $F_n^R=e^{-V_n^R}$. By Lemma \ref{L-stability-log-concavity} we have that $V_{n-2}^R$ is $\alphalpha_{n-2}$-convex and therefore the contractivity Lemma \ref{L-maximum-principle-l1} applies to $P_n^R(x;\cdot)$ and $P_n^R(\tilde x;\cdot)$ with $x,\tilde x\in \bar I_R$ leading to
$$W_{\infty,1}(P_n^R(x;\cdot),P_n^R(\tilde x;\cdot))\leq \frac{2}{1+2\alphalpha_{n-2}}|x-\tilde x|.$$
Therefore, arguing as in Lemma \ref{L-log-estimate} we end the proof.
\end{proof}

\begin{proof}[Proof of Theorem \ref{T-main-truncated}]

~
\smallskip

$\diamond$ {\sc Step 1}: Proof of $(i)$.\\
Under appropriate assumptions on $F_0^R$ we shall prove that $F_n^R/\Vert F_n^R\Vert_{L^1}$ and $\Vert F_n^R\Vert_{L^1}/\Vert F_{n-1}^R\Vert_{L^1}$ must converge as in $(iii)$, and their limit $(\blambda^R,\bF^R)$ solves \eqref{E-nonlinear-eigenproblem-truncated}. We set a $\alphalpha_0$-log-concave density $F_0^R\in L^1_+(\mathbb{R})\cap C^{1,\delta}(\bar I_R)$ with $\alphalpha_0>\alphalpha$ and $0<\delta<1$, compactly supported on $\bar I_R$ and bounded away from zero on it. Let $\{F_n^R\}_{n\in \mathbb{N}}$ be the solution to \eqref{E-nonlinear-truncated}. Since the initial datum has been chosen strongly log-concave, Lemma \ref{L-Cauchy-condition} implies
$$\left\Vert \frac{d}{dx}\left(\log \frac{F_n^R}{F_{n-1}^R}\right)\right\Vert_{L^\infty(\bar I_R)}\leq \left(\frac{2}{1+2\alphalpha}\right)^{n-1} \left\Vert\frac{d}{dx} \left(\log \frac{F_1^R}{F_{0}^R}\right)\right\Vert_{L^\infty(\bar I_R)},$$
for all $n\geq 1$ because $F_n^R$ are $\alphalpha_n$-log-concave with $\alphalpha_n>\alphalpha$ for all $n\in \mathbb{N}$ by Lemma \ref{L-stability-log-concavity}. Setting $V_n^R:\bar I_R\longrightarrow \mathbb{R}$ as before so that $F_n^R=e^{-V_n^R}$ we obtain
$$\left \Vert \frac{d}{dx} (V_n^R-V_m^R)\right \Vert_{L^\infty(\bar I_R)}\leq \sum_{k=m+1}^{n}\left \Vert \frac{d}{dx} (V_k^R-V_{k-1}^R)\right \Vert_{L^\infty(\bar I_R)}\leq \sum_{k=m}^{n-1}\left(\frac{2}{1+2\alphalpha}\right)^k\left \Vert \frac{d}{dx} (V_1^R-V_0^R)\right \Vert_{L^\infty(\bar I_R)},$$
for all $n\geq m\geq 1$. Since $\frac{2}{1+2\alphalpha}<1$ by Remark \ref{R-parameters-behavior}, then $\left \{\frac{d}{dx}(V_n^R)\right \}_{n\in \mathbb{N}}$ is a Cauchy sequence in $C(\bar I_R)$ and therefore it must converge uniformly to some limit $D^R\in C(\bar I_R)$. In particular, we have
\begin{equation}\label{E-convergence-log-derivative}
\frac{d}{dx}\left(\log  F_n^R \right)\rightarrow D^R\quad \mbox{in}\quad C(\bar I_R).
\end{equation}
Now, we show that $F_n^R/\Vert F_n^R\Vert_{L^1}$ must also converge when evaluated at least at one point, and we choose $x=0$ for instance. To this purpose, we note that $F_n^R(0)/\Vert F_n^R\Vert_{L^1}$ can be restated as follows
$$
\frac{\displaystyle\iint_{\bar Q_R} G\left(\frac{x_1+x_2}{2}\right)\,\exp\left(-(V_{n-1}^R(x_1)-V_{n-1}^R(0))-(V_{n-1}^R(x_2)-V_{n-1}^R(0))\right)dx_1\,dx_1}{\displaystyle\int_{\bar I_R} \iint_{\bar Q_R} G\left(x'-\frac{x_1+x_2}{2}\right)\,\exp\left(-m(x')-(V_{n-1}^R(x_1)-V_{n-1}^R(0))-(V_{n-1}^R(x_2)-V_{n-1}^R(0))\right)dx'\,dx_1\,dx_2},
$$
and $V_{n-1}^R(x)-V_{n-1}^R(0)$ in the integrand can be represented by the fundamental theory of calculus by
$$V_{n-1}^R(x)-V_{n-1}^R(0)=\int_0^1 \frac{d V_{n-1}^R}{dx}(\theta x)\,x\,d\theta,\quad x\in \bar I_R,$$
which converges uniformly to some limit. Therefore, there exists $L^R\in \mathbb{R}$ such that
\begin{equation}\label{E-convergence-log-one-point}
\log \frac{F_n^R(0)}{\Vert F_n^R\Vert_{L^1}}\rightarrow L^R.
\end{equation}
Putting \eqref{E-convergence-log-derivative}-\eqref{E-convergence-log-one-point} together and using the fundamental theorem of calculus entail
$$
\log \frac{F_n^R(x)}{\Vert F_n^R\Vert_{L^1}}=\log \frac{F_n^R(0)}{\Vert F_n^R\Vert_{L^1}}+\int_0^1\frac{d}{dx}\left(\log  F_n^R \right)(\theta x)\,x\,d\theta\rightarrow L^R+\int_0^1 D^R(\theta x)\,x\,d\theta\quad \mbox{in}\quad C^1(\bar I_R).
$$
We define $\bF^R(x):=\exp(L^R+\int_0^1 D^R(\theta x)\,x\,d\theta+\chi_{\bar I_R}(x))\in L^1_+(\mathbb{R})\cap \mathcal{P}(\mathbb{R})$ and therefore we achieve
\begin{equation}\label{E-convergence-normalized-profiles-truncated}
\frac{F_n^R}{\Vert F_n^R\Vert_{L^1}}\rightarrow \bF^R\quad \mbox{in}\quad C^1(\bar I_R).
\end{equation}

Our second step is to prove the convergence of $\Vert F_n^R\Vert_{L^1}/\Vert F_{n-1}^R\Vert_{L^1}$. Note that we have
\begin{equation}\label{E-convergence-rate-growth-truncated-pre}
\frac{\Vert F_n^R\Vert_{L^1}}{\Vert F_{n-1}^R\Vert_{L^1}}=\iint_{\mathbb{R}^2} H_R(x_1,x_2)\,\frac{F_{n-1}^R(x_1)}{\Vert F_{n-1}^R\Vert_{L^1}}\frac{F_{n-1}^R(x_2)}{\Vert F_{n-1}^R\Vert_{L^1}}\,dx_1\,dx_2,
\end{equation}
where we have defined
$$H_R(x_1,x_2):=\int_{\bar I_R} e^{-m(x)}G\left(x-\frac{x_1+x_2}{2}\right)\,dx,\quad (x_1,x_2)\in \mathbb{R}^2.$$
Since $H_R$ is a bounded function, therefore $H_R\in L^1(\bar Q_R)$ and, consequently, the above uniform convergence \eqref{E-convergence-normalized-profiles-truncated} of the normalized profiles along with \eqref{E-convergence-rate-growth-truncated-pre} imply that there must exists $\blambda^R$ with
\begin{equation}\label{E-convergence-rate-growth-truncated}
\frac{\Vert F_n^R\Vert_{L^1}}{\Vert F_{n-1}^R\Vert_{L^1}}\rightarrow \blambda^R.
\end{equation} 

The last step is to show that $(\blambda^R,\bF^R)$ must solve \eqref{E-nonlinear-eigenproblem-truncated}. This is actually clear because we have
$$
\frac{\Vert F_n^R\Vert_{L^1}}{\Vert F_{n-1}^R\Vert_{L^1}}\frac{F_n^R}{\Vert F_n^R\Vert_{L^1}}=\mathcal{T}_R\left[\frac{F_{n-1}^R}{\Vert F_{n-1}^R\Vert_{L^1}}\right],
$$
for all $n\in \mathbb{N}$, and  $\Vert F_n^R\Vert_{L^1}/\Vert F_{n-1}^R\Vert_{L^1}$ and $F_n^R/\Vert F_n^R\Vert_{L^1}$ converge in the above sense \eqref{E-convergence-normalized-profiles-truncated}-\eqref{E-convergence-rate-growth-truncated}. We note that $\bF^R$ must be $\alphalpha$-log-concave because so is $F_n^R$ for all $n\in \mathbb{N}$. The uniqueness of solution to \eqref{E-nonlinear-eigenproblem-truncated} will not be analyzed here, but it will hold as a consequence of the next contraction property in {\sc Step 2}. 

\medskip

$\diamond$ {\sc Step 2}: Proof of $(ii)$.\\
Once a strongly log-concave solution $(\blambda^R,\bF^R)$ of the truncated nonlinear eigenproblem \eqref{E-nonlinear-eigenproblem-truncated} exists, the one-step contraction property follows the same ideas as in the global version in Theorem \ref{T-main}(ii) sketched in Section \ref{S-sketch-one-step-contraction}. More specifically, we shall argue like in the proof of Lemma \ref{L-Cauchy-condition} where again we replace $u_n$ by the normalization of $F_n^R$ by the quasi-equilibrium $(\blambda^R)^n\bF^R$. That is, for any $n\in \mathbb{N}$, we define
$$u_n^R(x):=\frac{F_n^R(x)}{(\blambda^R)^n\bF^R},\quad x\in \bar I_R,$$
which must solve
$$u_n^R(x)=\frac{1}{\Vert u_{n-1}^R \bF^R \Vert_{L^1}}\iint_{\bar Q_R}P^R(x;x_1,x_2)\,u_{n-1}^R(x_1)\,u_{n-1}^R(x_2)\,dx_1\,dx_2,$$
for any $x\in \bar I_R$ and $n\in \mathbb{N}$, where $P^R(x;\cdot)\in L^1_+(\bar Q_R)\cap \mathcal{P}(\bar Q_R)$ is the one-step transition probability
\begin{align*}
&P^R(x;x_1,x_2):=\frac{1}{Z^R(x)}e^{-W^R(x;x_1,x_2)},\quad x\in \bar I_R,\quad (x_1,x_2)\in \bar Q_R,\\
&W^R(x;x_1,x_2):=\frac{1}{2}\left\vert x-\frac{x_1+x_2}{2}\right\vert^2+\bV^R(x_1)+\bV^R(x_2),\\
&Z^R(x):=\iint_{\bar Q_R}e^{-W^R(x;x_1,x_2)}\,dx_1\,dx_2.
\end{align*}
Again, we denote $\bV^R:\bar I_R\longrightarrow \mathbb{R}$ so that $\bF^R=e^{-\bV^R}$. By {\sc Step 1} we have that $\bV^R$ is $\alphalpha$-convex and therefore the contractivity Lemma \ref{L-maximum-principle-l1} applies to $P^R(x;\cdot)$ and $P^R(\tilde x;\cdot)$ with $x,\tilde x\in \bar I_R$ leading to
$$W_{\infty,1}(P^R(x;\cdot),P^R(\tilde x;\cdot))\leq \frac{2}{1+2\alphalpha}|x-\tilde x|.$$
Therefore, arguing as in Lemma \ref{L-log-estimate} we end the proof. 

In particular, the above implies that $(\blambda^R,\bF^R)$ must be the unique solution to the truncated nonlinear eigenproblem \ref{E-nonlinear-eigenproblem-truncated}. Indeed, if a second solution $(\lambda^R,F^R)$ exists, one can always define the special solution $F_n^R(x)=(\lambda^R)^n F^R(x)$ of \eqref{E-nonlinear-truncated} and therefore the above one-step contraction implies
$$\left\Vert\frac{d}{dx}\left(\log \frac{F^R}{\bF^R}\right)\right\Vert_{L^\infty(\bar I_R)}\leq \frac{2}{1+2\alphalpha}\left\Vert\frac{d}{dx}\left(\log \frac{F^R}{\bF^R}\right)\right\Vert_{L^\infty(\bar I_R)}.$$
Since $\frac{2}{1+2\alphalpha}<1$ by Remark \ref{R-parameters-behavior}, then we have $F^R=\bF^R$ (and therefore $\lambda^R=\blambda^R$) because both $F^R$ and $\bF^R$ are probability densities by definition.

\medskip

$\diamond$ {\sc Step 3}: Proof of $(iii)$.\\
We prove that the convergence in {\sc Step 1} holds for generic initial data $F_0^R\in L^1_+(\mathbb{R})\cap C^1(\bar I_R)$ compactly supported on $\bar I_R$ and bounded away from zero on it, and not necessarily strongly log-concave. Note that by the above one-step contractivity property we have again
$$\left \Vert\frac{d}{dx} (V_n^R-\bV^R)\right \Vert_{L^\infty(\bar I_R)}\leq \left(\frac{2}{1+2\alphalpha}\right)^n\left \Vert \frac{d}{dx}(V_0^R-\bV^R)\right \Vert_{L^\infty(\bar I_R)},$$
for all $n\in \mathbb{N}$. Then, the same argument as in {\sc Step 1} can be applied with explicit convergence rates and equal to $\left(\frac{2}{1+2\alphalpha}\right)^n$ at each step: first $\frac{d}{dx}(\log F_n^R)$, second $\log\left ( F_n^R(0)/\Vert F_n^R\Vert_{L^1}\right )$, hence $\log \left (F_n^R/\Vert F_n^R\Vert_{L^1}\right )$, and finally also $\Vert F_n^R\Vert_{L^1}/\Vert F_{n-1}^R\Vert_{L^1}$. Therefore, we readily obtain the claimed convergence rates for the rates of growth and the normalized profiles.
\end{proof}

\section{Existence and uniqueness of strongly log-concave quasi-equilibria}\label{S-equilibria}

In this section, we employ the truncated quasi-equilibria in the above Theorem \ref{T-main-truncated} to build a globally defined quasi-equilibrium of the non-truncated model \eqref{E-nonlinear}, thus proving Theorem \ref{T-main}(i). In the following, we show that the family of probability densities $\{\bF^R\}_{R>0}$ are uniformly tight, and therefore weak limits cannot loose mass at infinity, which will be useful in the sequel in order to pass to the limit with $R\rightarrow \infty$.

\begin{proposition}[Bounded second-order moments]\label{P-properties-truncated-equilibria}
Under the assumptions in Theorem \ref{T-main-truncated}, let us consider the unique eigenpair $(\blambda^R,\bF^R)$ of \eqref{E-nonlinear-eigenproblem-truncated} for any $R>0$ according to Theorem \ref{T-main-truncated}(i). Then, 
\begin{equation}\label{E-bounded-second-order-moments}
\sup_{R>0}\,\int_{\mathbb{R}}x^2 \bF^R(x)\,dx<\infty.
\end{equation}
\end{proposition}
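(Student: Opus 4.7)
The plan exploits the $\alphalpha$-log-concavity of $\bF^R$ with $\alphalpha>1/2$ \emph{uniform in} $R$ granted by Theorem~\ref{T-main-truncated}(i). By the Brascamp--Lieb inequality, this yields $\mathrm{Var}(\bF^R)\leq 1/\alphalpha$, so that
\[
\int_{\mathbb{R}} x^2 \bF^R(x)\,dx = \mu_R^2 + \mathrm{Var}(\bF^R) \leq \mu_R^2 + 1/\alphalpha, \qquad \mu_R:=\int_{\mathbb{R}} x\,\bF^R(x)\,dx.
\]
The task thus reduces to uniformly bounding the mean $\mu_R$ in $R$.

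The first main ingredient is the integrated identity obtained by testing the eigenvalue equation $\blambda^R \bF^R = e^{-m_R}\mathcal{B}[\bF^R]$ (so that $\bF^R$ is supported on $\bar I_R$) against the positive weight $e^{m(x)}$ and using $\int_{\mathbb{R}} \mathcal{B}[\bF^R]\,dx = 1$:
\[
\blambda^R \int_{\mathbb{R}} e^{m(x)} \bF^R(x)\,dx \leq 1.
\]
Two successive applications of Jensen's inequality (for the convex function $t\mapsto e^t$, then for the convex $m$), combined with the strong convexity lower bound $m(x)\geq \frac{\betabeta}{2}x^2$, upgrade this to
\[
\blambda^R \leq e^{-m(\mu_R)} \leq e^{-\betabeta \mu_R^2/2}.
\]

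The second main ingredient is the pointwise upper bound $\bF^R(x) \leq e^{-m(x)}/(\blambda^R \sqrt{2\pi})$ coming from $\|\mathcal{B}[\bF^R]\|_\infty \leq \|G\|_\infty = 1/\sqrt{2\pi}$, together with the mode-level lower bound $\bF^R(x_R^*) \geq \sqrt{\alphalpha/(2\pi)}$ obtained by integrating the Gaussian envelope $\bF^R(x)\leq \bF^R(x_R^*) e^{-\alphalpha(x-x_R^*)^2/2}$ against the unit-mass constraint. Evaluating the pointwise bound at the unique mode $x = x_R^*$ gives $\blambda^R \sqrt{\alphalpha} \leq e^{-m(x_R^*)}$, transferring control from $\blambda^R$ onto $|x_R^*|$ via $\frac{\betabeta}{2}|x_R^*|^2 \leq m(x_R^*)$. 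Since the mode and mean of the strongly log-concave $\bF^R$ differ by at most $\mathcal{O}(1/\sqrt{\alphalpha})$, a bound on $|x_R^*|$ also bounds $|\mu_R|$.

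The central obstacle is precisely the uniform lower bound $\blambda^R \geq c_0 > 0$ in $R$. My plan is to extract it from the mode identity
\[
m'(x_R^*) = (\log \mathcal{B}[\bF^R])'(x_R^*),
\]
obtained by differentiating the log of the eigenvalue equation and evaluating at the critical point $x_R^*$ of $\bF^R$. Since $\mathcal{B}[\bF^R] = G\ast \tilde F^R$ is $\gamma$-log-concave with $\gamma = 2\alphalpha/(1+2\alphalpha)$ by Lemma~\ref{L-stability-log-concavity-convolutions} (as $\tilde F^R$ is $2\alphalpha$-log-concave), the right-hand side is a controlled function of $x_R^*$: it agrees with the posterior-mean identity $\mathbb{E}[Y|X=x_R^*]-x_R^*$, whose slope is $\mathrm{Var}(Y|X=x_R^*)-1 \in [-1,-\gamma]$ by the $(1+2\alphalpha)$-log-concavity of the posterior $Y|X=x$. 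Confronting this with $|m'(x_R^*)| \geq \betabeta|x_R^*|$ (from $\betabeta$-convexity of $m$ and $m'(0)=0$) forces $|x_R^*|$, and hence $\blambda^R$, to stay uniformly away from the degenerate regime, yielding the desired uniform moment bound. The delicate point is this closure between the posterior-mean map and the strong convexity of $m$, which is where the hypothesis $\alphalpha > 1/2$ (rather than merely $\alphalpha > 0$) is used quantitatively.
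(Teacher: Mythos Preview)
Your reduction to bounding $|\mu_R|$ via Brascamp--Lieb is correct, and the ingredients you assemble (Jensen giving $\blambda^R\leq e^{-m(\mu_R)}$, the pointwise bound $\blambda^R\bF^R\leq e^{-m}/\sqrt{2\pi}$, the mode lower bound $\bF^R(x_R^*)\geq\sqrt{\alphalpha/(2\pi)}$, the mode identity $m'(x_R^*)=(\log\mathcal B[\bF^R])'(x_R^*)$, and the Tweedie/posterior-variance computation) are all valid. But the final closure has a genuine gap. Knowing that $g(x):=(\log\mathcal B[\bF^R])'(x)=\mathbb E[Y\mid X=x]-x$ has slope in $[-1,-\gamma]$ does \emph{not} by itself bound $|g(x_R^*)|$: you also need to locate the zero of $g$, namely the mode $y_R^*$ of $\mathcal B[\bF^R]$. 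Combining $m'(x_R^*)=g(x_R^*)$ with $|m'(x_R^*)|\geq\betabeta|x_R^*|$ and the slope bound only gives (for $y_R^*>0$, say) $(\betabeta+1)x_R^*\leq y_R^*$, which forces nothing unless $y_R^*$ is controlled. The missing step is that $\mathcal B[\bF^R]$ also has mean $\mu_R$ and is $\gamma$-log-concave, so $|y_R^*-\mu_R|\leq 1/\sqrt{\gamma}$ by the same mode--mean estimate you invoke for $\bF^R$; then $|y_R^*-x_R^*|\leq C$ and $(\betabeta+1)x_R^*\leq x_R^*+C$ closes to $|x_R^*|\leq C/\betabeta$. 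Note also that both of your inequalities on $\blambda^R$ are \emph{upper} bounds, so a bound on $|x_R^*|$ never yields $\blambda^R\geq c_0$; fortunately you do not need it, since $|x_R^*|$ bounded gives $|\mu_R|$ bounded directly.

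For comparison, the paper argues by contradiction along a different route: assuming $\mu_R\to\infty$, it writes the eigen-identity as $A_RB_R=1$ with $A_R=\int e^{m_R}\bF^R$ and $B_R=\blambda^R=\iint(G*e^{-m_R})\big(\tfrac{x_1+x_2}{2}\big)\bF^R\otimes\bF^R$, lower-bounds each factor by Chebyshev concentration near $\mu_R$ together with monotonicity of $m$ and of $G*e^{-m_R}$, and then uses a quantitative lower bound on the Gaussian convolution (Lemma~\ref{L-lower-bound-convolution-2}) to force $A_RB_R\to\infty$. Your mode-identity route is arguably more direct once the closure above is supplied, and avoids the technical convolution lemma; the paper's route avoids the mode--mean control and the posterior-variance machinery.
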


We recall that a similar result was necessary in \cite{CLP-21-arxiv}. Indeed, a general strategy was developed therein to propagate second-order moments along any solution $\{F_n\}_{n\in \mathbb{N}}$ under the {\it a priori} knowledge that the centers of mass stay uniformly bounded. However, such a condition proved difficult to verify unless the initial datum $F_0$ is centered at the origin, and $m$ is an even function, which would leave the center of mass fixed at the origin (and thus bounded) for all times. To overcome this problem, an alternative approach was developed in \cite[Lemma 4.5]{CLP-21-arxiv} in order to control the convergence to zero of the center of mass in the case of quadratic selection. Unfortunately, the proof exploits the Gaussian structure in a crucial way and cannot be easily adapted to more general selection functions. Here, we propose an alternative strategy based on the extra knowledge that $\bF^R$ are $\alphalpha$-log-concave.

\begin{proof}[Proof of Proposition \ref{P-properties-truncated-equilibria}]

~

\smallskip

$\diamond$ {\sc Step 1}: Uniform bound of the variance.\\
Let us define the center of mass and the variance
\begin{align*}
\bmu_R&:=\int_{\mathbb{R}} x\bF^R(x)\,dx,\\
\bsigma_R^2&:=\int_{\mathbb{R}}\left(x-\bmu_R\right)^2\bF^R(x)\,dx,
\end{align*}
for any $R>0$. Since each eigenfunction $\bF^R$ is $\alphalpha$-log-concave, then a straightforward application of the Brascamp-Lieb inequality shows that variances $\bsigma_R^2$ verify
\begin{equation}\label{E-bounded-variance}
\bsigma_R^2\leq \frac{1}{\alphalpha},
\end{equation}
for any $R>0$, see \cite[Theorem 4.1]{BL-76}. Then, in order to control the (non-centered) second order moments, we actually need to find a bound of the center of mass $\bmu_R$.

\medskip

$\diamond$ {\sc Step 2}: Uniform bound of the center of mass.\\
Assume that $\{\bmu_R\}_{R>0}$ is unbounded by contradiction. Changing variables $x$ with $-x$ if necessary, we may assume without loss of generality that $\bmu_R\nearrow +\infty$ as $R\nearrow +\infty$ up to an appropriate subsequence, which we denote in the same way for simplicity of notation. Note that integrating \eqref{E-nonlinear-eigenproblem-truncated} against $e^{m_R(x)}$ and remarking that $\int_{\mathbb{R}}\mathcal{B}[\bF^R](x)\,dx=\int_{\mathbb{R}}\bF^R(x)\,dx=1$ (where $\mathcal{B}$ is given in \eqref{E-operator-B}) we obtain
\begin{equation}\label{E-AR-BR-1}
A_R\,B_R=1,
\end{equation}
for every $R>0$, where each factor reads
\begin{align*}
A_R&:=\int_{\mathbb{R}} e^{m_R(x)}\,\bF^R(x)\,dx,\\
B_R&:=\int_{\mathbb{R}^2}\phi^R\left(\frac{x_1+x_2}{2}\right)\bF^R(x_1)\,\bF^R(x_2)\,dx_1\,dx_2,
\end{align*}
and $\phi^R:=G*e^{-m_R}$. By Chebyshev's inequality we know that
\begin{equation}\label{E-Chebyshev}
\int_{|x-\bmu_R|\leq \sqrt{2}\bsigma_R}\bF^R(x)\,dx\geq \frac{1}{2},
\end{equation}
for all $R>0$. Therefore, noting that $m$ is non-decreasing in $\mathbb{R}_+$ by virtue of the hypothesis \eqref{H-main-m-1}-\eqref{H-main-m-2} we obtain the following lower bound
\begin{align}\label{E-AR-bound}
\begin{aligned}
A_R&\geq \int_{|x-\bmu_R|\leq \sqrt{2}\bsigma_R}e^{m_R(x)}\,\bF^R(x)\,dx\\
&\geq \frac{1}{2}\min_{|x-\bmu_R|\leq \sqrt{2}\bsigma_R} e^{m(x)}=\frac{1}{2}e^{m(\bmu_R-\sqrt{2}\bsigma_R)},
\end{aligned}
\end{align}
for large enough $R>0$ so that $[\bmu_R-\sqrt{2}\bsigma_R,\bmu_R+\sqrt{2}\bsigma_R]\subset \mathbb{R}_+$. Similarly, using \eqref{E-Chebyshev} and noting that $\phi^R$ is non-increasing at the right of its maximizer (by strong log-concavity, {\it cf}. Lemma \ref{L-stability-log-concavity}) we obtain
\begin{align}\label{E-BR-bound}
\begin{aligned}
B_R&\geq \iint_{|x_i-\bmu_R|\leq \sqrt{2}\sigma_R}\phi^R\left(\frac{x_1+x_2}{2}\right)\,\bF^R(x_1)\,\bF^R(x_2)\,dx_1\,dx_2\\
&\geq \frac{1}{4}\min_{|x-\bmu_R|\leq \sqrt{2}\bsigma_R}\phi^R(x)\geq \frac{1}{4}\phi^R(\bmu_R+\sqrt{2}\bsigma_R),
\end{aligned}
\end{align}
for large enough $R>0$ so that $[\bmu_R-\sqrt{2}\bsigma_R,\bmu_R+\sqrt{2}\bsigma_R]$ lies in that region of the domain. Note that the above can be obtained if $R>0$ is large enough since $\bmu^R-\sqrt{2}\bsigma_R\rightarrow \infty$ by assumptions, but however the maximizers of $\phi^R$ must converge to the maximizer of $\phi$, which is a fixed number in the real line. Multiplying \eqref{E-AR-bound} and \eqref{E-BR-bound} yields the lower bound
\begin{equation}\label{E-AR-BR-bound-pre}
A_RB_R\geq \frac{1}{8} e^{m_R(\bmu_R-\sqrt{2}\bsigma_R)}\,(G*e^{-m_R})(\bmu_R+\sqrt{2}\bsigma_R),
\end{equation}
for large enough $R>0$. Lemma \ref{L-lower-bound-convolution-2} provides a explicit lower bound \eqref{E-lower-bound-convolution-2} on Gaussian convolutions. Therefore, applying it to the second factor in \eqref{E-AR-BR-bound-pre} with the choices
$$f=e^{-m},\quad \gamma=\betabeta,\quad x_0=\bmu_R,\quad \delta=\sqrt{2}\bsigma_R,$$
implies the following lower bound
\begin{align}\label{E-AR-BR-bound}
\begin{aligned}
A_RB_R&\geq G(2\sqrt{2}\bsigma_R)\int_0^{\frac{\betabeta}{\betabeta+1}\bmu_R-\frac{\sqrt{2}\bsigma_R}{\betabeta+1}}\exp\left(\frac{\betabeta+1}{2}z^2\right)\,dz\\
&\geq G\left(\frac{2\sqrt{2}}{\sqrt{\alphalpha}}\right)\int_0^{\frac{\betabeta}{\betabeta+1}\bmu_R-\frac{\sqrt{2}}{\sqrt{\alphalpha}(\betabeta+1)}}\exp\left(\frac{\betabeta+1}{2}z^2\right)\,dz,
\end{aligned}
\end{align}
where in the last line we have used the bound \eqref{E-bounded-variance} of variances. Since the left hand side in \eqref{E-AR-BR-bound} diverges as $R\rightarrow \infty$ because $\bmu_R\rightarrow +\infty$, then we reach a contradiction with \eqref{E-AR-BR-1}, and this ends the proof.
\end{proof}

\begin{theorem}[Existence of quasi-equilibria]
Under the assumptions in Theorem \ref{T-main-truncated}, let us consider the unique eigenpair $(\blambda^R,\bF^R)$ of \eqref{E-nonlinear-eigenproblem-truncated} for any $R>0$. Then, there exist $\blambda\in \mathbb{R}$ and $\bF\in L^1_+(\mathbb{R})\cap C^\infty(\mathbb{R})$ which is $\alphalpha$-log-concave (with $\alphalpha$ given in \eqref{E-main-alpha}) such that
$$\blambda^R\rightarrow \blambda,\quad \bF^R\rightarrow \bF,\quad \mbox{as}\quad R\rightarrow\infty,$$
up to subsequence, both pointwise and in any space $(\mathcal{P}_p(\mathbb{R}),W_p)$ with $1\leq p<2$. Moreover, the pair $(\blambda,\bF)$ is the unique solution to \eqref{E-nonlinear-eigenproblem} among all pairs $(\lambda,F)$ verifying \eqref{E-main-F-uniqueness}.
\end{theorem}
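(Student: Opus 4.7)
The plan is to extract a limit from the family $\{(\blambda^R, \bF^R)\}_{R>0}$ of truncated equilibria provided by Theorem \ref{T-main-truncated}, pass to the limit $R \to \infty$ in \eqref{E-nonlinear-eigenproblem-truncated}, and derive the conditional uniqueness directly from the already-proven one-step contraction \eqref{E-main-one-step-contraction}. For compactness, uniform $\alphalpha$-log-concavity combined with the uniform second-moment bound of Proposition \ref{P-properties-truncated-equilibria} yields tightness and uniform integrability of $|x|^p$ for every $p<2$, hence relative compactness of $\{\bF^R\}$ in $(\mathcal{P}_p(\mathbb{R}), W_p)$. To promote this to pointwise convergence, I would write $\bF^R = e^{-\bV^R}$ on $\bar I_R$ and let $x_R^\star$ be a minimizer of $\bV^R$. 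The $\alphalpha$-convex lower bound $\bV^R(x) \geq \bV^R(x_R^\star) + \frac{\alphalpha}{2}(x-x_R^\star)^2$ combined with $\int \bF^R = 1$ gives an upper bound on $\bV^R(x_R^\star)$, and together with the uniform second moment controls $|x_R^\star|$. Thus $\bV^R$ is uniformly bounded on every compact set for $R$ large, and $\alphalpha$-convexity provides equi-Lipschitz continuity; Arzel\`a-Ascoli yields a subsequence $\bV^R \to \bV$ locally uniformly, so $\bF^R \to \bF := e^{-\bV}$ pointwise (and in $\mathcal{P}_p$ thanks to tightness). The limit $\bV$ inherits $\alphalpha$-convexity.

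Passing to the limit in the eigenproblem is then a matter of dominated convergence. The uniform bound $\bF^R(x) \leq C\,e^{-\frac{\alphalpha}{2}(x-x_R^\star)^2}$, together with the $R$-uniform bound on $|x_R^\star|$, dominates the integrand defining $\mathcal{B}[\bF^R](x)$, so $\mathcal{B}[\bF^R] \to \mathcal{B}[\bF]$ pointwise. Integrating the eigen-equation identifies $\blambda^R \to \blambda := \int_{\mathbb{R}} e^{-m}\mathcal{B}[\bF]\,dx$, and pointwise passage in \eqref{E-nonlinear-eigenproblem-truncated} yields $\blambda\bF(x) = e^{-m(x)}\mathcal{B}[\bF](x)$ on all of $\mathbb{R}$. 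Positivity of $G$ forces $\bF > 0$, and since $\mathcal{B}[\bF]$ is a Gaussian convolution and hence $C^\infty$, combined with the assumed regularity of $m$ the identity $\bF = \blambda^{-1}e^{-m}\mathcal{B}[\bF]$ bootstraps $\bF \in C^\infty(\mathbb{R})$.

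Conditional uniqueness is immediate from Theorem \ref{T-main}(ii). For any other solution $(\lambda, F)$ of \eqref{E-nonlinear-eigenproblem} satisfying \eqref{E-main-F-uniqueness}, the ansatz $F_n(x) := \lambda^n F(x)$ solves \eqref{E-nonlinear}, and iterating \eqref{E-main-one-step-contraction} gives
$$
\mathcal{I}_\infty(F\Vert\bF) = \mathcal{I}_\infty(F_n\Vert\bF) \leq \left(\frac{2}{1+2\alphalpha}\right)^n \mathcal{I}_\infty(F\Vert\bF),
$$
where the first equality uses that $\mathcal{I}_\infty$ is invariant under multiplying either argument by a positive scalar. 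Letting $n\to\infty$ forces $\log(F/\bF)$ to be constant, and unit normalization gives $F = \bF$ and hence $\lambda = \blambda$.

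The main obstacle is the $R$-uniform control of the mode $x_R^\star$ and of the maximum density $e^{-\bV^R(x_R^\star)}$: this cannot be read off from the truncated eigen-equation itself because the supports $\bar I_R$ expand to the whole line, and must instead be extracted from the combination of $\alphalpha$-log-concavity, the Brascamp-Lieb-type variance bound underlying Proposition \ref{P-properties-truncated-equilibria}, and unit mass. Once this is in place, the dominated-convergence passage in the nonlinear operator $\mathcal{B}$ and the smoothness bootstrap are routine.
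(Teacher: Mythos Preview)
Your proposal is correct and the uniqueness argument is identical to the paper's, but your existence argument takes a different route. You work directly on the potentials $\bV^R$ and use Arzel\`a--Ascoli: you control the mode $x_R^\star$ and the extremal values of $\bV^R$ via $\alphalpha$-log-concavity and the second-moment bound, then extract a locally uniform limit of $\bV^R$. The paper instead argues at the measure level: Prokhorov gives a narrow limit $\bF$ of (a subsequence of) $\{\bF^R\}$, then the truncated eigen-equation itself is used to produce a \emph{pointwise} limit $\tilde\bF$ (since the right-hand side $e^{-m_R}\mathcal{B}[\bF^R]$ passes to the limit under narrow convergence), and Scheff\'e's lemma identifies $\tilde\bF=\bF$. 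This soft approach bypasses the pointwise bounds on $x_R^\star$ and on $\min/\max\bF^R$ that you flag as ``the main obstacle''; in particular, the paper never needs to argue that $\bV^R$ is bounded \emph{above} on compacts (your equi-Lipschitz step implicitly requires this, and it would have to come from a lower bound on $\bF^R$ via the eigen-equation, which you do not spell out). Your approach has the advantage of yielding local uniform convergence of the potentials directly, which makes the $\alphalpha$-log-concavity of the limit and the dominated-convergence passage in $\mathcal{B}$ very transparent; the paper's approach trades this for a shorter compactness step but then has to reconcile two a priori different limits.
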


\begin{proof}

~

\smallskip

$\diamond$ {\sc Step 1}: Existence via limit as $R\rightarrow \infty$.\\ 
Let us notice that by \eqref{E-bounded-second-order-moments} in Proposition \ref{P-properties-truncated-equilibria} we have that $\{\bF^R\}_{R>0}$ is a uniformly tight sequence of probability measures. Therefore, by Prokhorov's theorem there must exist $R_n\nearrow \infty$ and some limiting probability measure $\bF\in \mathcal{P}(\mathbb{R})$ such that 
\begin{equation}\label{E-convergence-truncations-2}
\bF^{R_n}\rightarrow \bF\quad \mbox{narrowly in }\mathcal{P}(\mathbb{R}).
\end{equation}
By integration on \eqref{E-nonlinear-eigenproblem-truncated} we also obtain that
$$\blambda^{R_n}=\iint_{\mathbb{R}^2} (e^{-m_{R_n}}*G)\left(\frac{x_1+x_2}{2}\right)\,\bF^{R_n}(x_1)\,\bF^{R_n}(x_2)\,dx_1\,dx_2,$$
and then we can pass to the limit as $n\rightarrow \infty$ in the eigenvalues too. Specifically, since $e^{-m_R}\rightarrow e^{-m}$ in $L^\infty(\mathbb{R})$, then $e^{-m_R}*G\rightarrow e^{-m}*G$ in $C_b(\mathbb{R})$, and therefore by \eqref{E-convergence-truncations-2} we obtain
\begin{equation}\label{E-convergence-truncations-3}
\blambda^{R_n}\rightarrow\blambda,
\end{equation}
as $n\rightarrow\infty$, where $\blambda$ is given by
\begin{equation}\label{E-convergence-truncation-identify-lambda}
\blambda:=\iint_{\mathbb{R}^2} (e^{-m}*G)\left(\frac{x_1+x_2}{2}\right)\,\bF(x_1)\,\bF(x_2)\,dx_1\,dx_2=\int_{\mathbb{R}} \mathcal{T}[\bF](x)\,dx.
\end{equation}
Putting \eqref{E-convergence-truncations-2} and \eqref{E-convergence-truncations-3} together and taking limits as $n\rightarrow \infty$ in \eqref{E-nonlinear-eigenproblem-truncated} implies that $\{\bF^{R_n}\}_{n\in \mathbb{N}}$ must also converge pointwise to some other limit $\tilde \bF\in L^1_+(\mathbb{R})$ by Fatou's lemma. Note that since $\bF^R$ are all $\alphalpha$-log-concave, then so must also be their pointwise limit $\tilde \bF$. Indeed, note that we further have
\begin{equation}\label{E-convergence-truncation-identify-equation}
\blambda\tilde \bF(x)=\mathcal{T}[\bF](x),\quad x\in \mathbb{R},
\end{equation}
and therefore, $\tilde \bF\in L^1_+(\mathbb{R})\cap\mathcal{P}(\mathbb{R})$, in view of \eqref{E-convergence-truncation-identify-lambda}. Then, we actually have $\bF^{R_n}\rightarrow \tilde \bF$ in $L^1(\mathbb{R})$ (thus narrowly in $\mathcal{P}(\mathbb{R})$) by Scheff\'e's lemma. Since $\bF$ is a narrow limit of the same sequence, then we have $\tilde \bF=\bF$ and by \eqref{E-convergence-truncation-identify-equation} we obtain that $(\blambda,\bF)$ must verify the initial problem \eqref{E-nonlinear-eigenproblem}.  Let us also emphasize that, we indeed have convergence in any $L^p$ Wasserstein space with $1\leq p<2$ because all the $p$-th order moment with $1\leq p<2$ are uniformly integrable by \eqref{E-bounded-second-order-moments}, see \cite[Proposition 7.1.5]{AGS-08}.

\medskip

$\diamond$ {\sc Step 2}: Uniqueness of quasi-equilibria.\\
Note that several different convergent subsequences of $\{\bF^R\}_{R>0}$ in {\sc Step 1} could give rise to various eigenpairs $(\blambda,\bF)$ of \eqref{E-nonlinear-eigenproblem}. Whilst the global uniqueness is unclear with this method, we prove that there can only exist one solution to \eqref{E-nonlinear-eigenproblem} among the pairs $(\lambda,F)$ verifying \eqref{E-main-F-uniqueness}. For, we exploit the one-step contraction property in Theorem \ref{T-main}(ii). Specifically, assume that $(\lambda,F)$ is any other solution to \eqref{E-nonlinear-eigenproblem} and define $F_n(x)=\lambda^n F(x)$, which is clearly a solution to the evolution problem \eqref{E-nonlinear} with initial datum $F_0\in L^1_+(\mathbb{R})\cap C^1(\mathbb{R})$ verifying the hypothesis \eqref{H-main-F0} by virtue of the assumption \eqref{E-main-F-uniqueness}. Then, \eqref{E-main-one-step-contraction} implies
$$\left\Vert\frac{d}{dx} \left(\log \frac{F}{\bF}\right)\right\Vert_{L^\infty}\leq \frac{2}{1+2\alphalpha}\left\Vert \frac{d}{dx}\left(\log \frac{F}{\bF}\right)\right\Vert_{L^\infty}.$$
Again, since $\frac{2}{1+2\alphalpha}<1$ by Remark \ref{R-parameters-behavior}, then we obtain that $F/\bF$ must be constant. Since both $F$ and $\bF$ are normalized probability densities, then we necessarily have that $F=\bF$ (and therefore $\lambda=\blambda$).
\end{proof}

\section{Convergence to equilibrium for restricted initial data}\label{S-convergence-equilibrium}

In this section, we prove asynchronous exponential as claimed in Theorem \ref{T-main}(iii). More specifically, we show that for restricted initial the asymptotic behavior of the rate of growth of mass $\Vert F_n\Vert_{L^1}/\Vert F_{n-1}\Vert_{L^1}$ and the normalized profiles $F_n/\Vert F_n\Vert_{L^1}$ is dictated by the solution $(\blambda,\bF)$ of the eigenproblem \eqref{E-nonlinear-eigenproblem} obtained in Theorem \ref{T-main}(i). We derive the relaxation of the normalized profiles under the relative entropy metric. Our starting point is the one-step contraction property of the $L^\infty$ relative Fisher information in Theorem \ref{T-main}(ii) and the following version of the logarithmic-Sobolev inequality with respect to strongly log-concave densities, which relate the ($L^2$) relative Fisher information and the relative entropy.

\begin{proposition}[Logarithmic-Sobolev inequality]\label{P-LSI}
Consider any couple $P,Q\in L^1_+(\mathbb{R})\cap \mathcal{P}(\mathbb{R})$ such that $Q$ is $\gamma$-log-concave for some $\gamma>0$. Then, we have
\begin{equation}\label{E-LSI}
\mathcal{D}_{KL}(P\Vert Q)\leq \frac{1}{2\gamma}\,\mathcal{I}_2(P\Vert Q)\leq\frac{1}{2\gamma}\,\mathcal{I}_\infty^2(P\Vert Q),
\end{equation}
where $\mathcal{D}_{\rm KL}$ is the relative entropy \eqref{E-main-relative-entropy}, $\mathcal{I}_2$ is the usual (or $L^2$) relative Fisher  information \eqref{E-main-Fisher-information-2},
and $\mathcal{I}_\infty$ is the $L^\infty$ relative Fisher  information \eqref{E-main-Fisher-information-infty}.
\end{proposition}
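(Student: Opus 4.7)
The proposition contains two inequalities, and the second is immediate: bounding $|\frac{d}{dx}\log(P/Q)|^2$ pointwise by its $L^\infty$ norm and using $\int_{\mathbb{R}} P\,dx = 1$ gives
$$\mathcal{I}_2(P\Vert Q) \leq \left\Vert \frac{d}{dx}\log\frac{P}{Q}\right\Vert_{L^\infty}^2 \int_{\mathbb{R}}P\,dx = \mathcal{I}_\infty^2(P\Vert Q).$$

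The first inequality is the classical Bakry-Emery logarithmic Sobolev inequality for strongly log-concave reference measures, which I would invoke directly (see e.g.\ \cite{B-94,BGL-14,AMTU-01}). For the reader's convenience, here is the outline. Write $Q = e^{-V}$ with $V''\geq \gamma$ and consider the overdamped Langevin semigroup $(S_t)_{t\geq 0}$ generated by $Lf=f''-V'f'$, which admits $Q$ as unique invariant probability measure. Setting $h:=P/Q$ and $h_t:=S_t h$, the densities $h_t Q$ remain probability measures and converge to $Q$ as $t\to\infty$, and one establishes the two entropy-dissipation identities
\begin{align*}
\frac{d}{dt}\mathcal{D}_{\rm KL}(h_t Q \Vert Q) &= -\mathcal{I}_2(h_t Q \Vert Q), \\
\frac{d}{dt}\mathcal{I}_2(h_t Q \Vert Q) &\leq -2\gamma\,\mathcal{I}_2(h_t Q \Vert Q),
\end{align*}
the second being a consequence of the Bochner identity $\Gamma_2(f) = (f'')^2 + V''(f')^2 \geq \gamma\,\Gamma(f)$ valid under $V''\geq \gamma$. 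Gronwall's inequality applied to the Fisher information and integration of the first identity on $[0,\infty)$ then yield $\mathcal{D}_{\rm KL}(P\Vert Q) \leq \frac{1}{2\gamma}\mathcal{I}_2(P\Vert Q)$.

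No substantial obstacle is expected, as the Bakry-Emery argument is textbook material in exactly the generality required here. The only technical care needed is to justify the semigroup manipulations when $P$ lacks sufficient regularity or decay, which is routinely handled by mollifying $P$ (say via $P_\varepsilon := P * G_\varepsilon$, which inherits smoothness and positivity) and passing to the limit $\varepsilon \to 0$ by the lower semicontinuity of both $\mathcal{D}_{\rm KL}$ and $\mathcal{I}_2$ along narrow convergence. Note that the statement is trivial when $\mathcal{I}_\infty(P\Vert Q)=+\infty$, so one may assume without loss of generality that $\log(P/Q)$ is Lipschitz, which ensures that $P/Q$ and all of its derivatives have mild enough behavior for the above computations to be rigorously justified.
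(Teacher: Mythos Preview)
Your proposal is correct and matches the paper's treatment: the paper does not give a proof either but simply cites the classical Bakry--Emery logarithmic Sobolev inequality (specifically Corollary 5.7.2 and Section 9.3.1 of \cite{BGL-14}) for the first inequality and notes that the second ``readily holds by definition.'' Your sketch of the Bakry--Emery argument is an accurate outline of what lies behind that citation, so you have effectively reproduced and slightly expanded the paper's approach.
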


On the one hand, the first part of the inequality \eqref{E-LSI} amounts to the usual logarithmic-Sobolev inequality with respect to a strongly log-concave measure, see Corollary 5.7.2 and Section 9.3.1 in \cite{BGL-14} for details. On the other hand, the second part of the inequality readily holds by definition. Therefore, putting Theorem \ref{T-main}(ii) and Proposition \eqref{P-LSI} together, we end the proof of Theorem \ref{T-main}(iii).

\begin{proof}[Proof of Theorem \ref{T-main}(iii)]
Notice that by iterating $n$ times the one-step contraction property in Theorem \ref{T-main}(ii) and using the logarithmic-Sobolev inequality \eqref{E-LSI} in Proposition \ref{P-LSI} we obtain
\begin{equation}\label{E-relaxation-normalized-profiles}
\mathcal{D}_{\rm KL}\left(\left.\frac{F_n}{\Vert F_n\Vert_{L^1}}\right\Vert\bF\right)\leq C_1\,\left(\frac{2}{1+2\alphalpha}\right)^{2n},
\end{equation}
for every $n\in \mathbb{N}$, where the constant $C_1$ reads
$$C_1:=\frac{1}{2\gamma}\,\mathcal{I}_\infty^2\left(\left. F_0\right\Vert\bF\right),$$
and it is finite by the assumption \eqref{H-main-F0}. This proves the relaxation of the normalized profiles towards $\bF$ in the relative entropy sense. Regarding the rate of growth, we note that
\begin{align}
\frac{\Vert F_n\Vert_{L^1}}{\Vert F_{n-1}\Vert_{L^1}}&=\iint_{\mathbb{R}^2}\phi\left(\frac{x_1+x_2}{2}\right)\frac{F_{n-1}(x_1)}{\Vert F_{n-1}\Vert_{L^1}}\frac{F_{n-1}(x_2)}{\Vert F_{n-1}\Vert_{L^1}}\,dx_1\,dx_2,\label{E-rate-growth-n-explicit}\\
\blambda&=\iint_{\mathbb{R}^2}\phi\left(\frac{x_1+x_2}{2}\right)\bF(x_1)\,\bF(x_2)\,dx_1\,dx_2.\label{E-rate-growth-limit-explicit}
\end{align}
where $(\blambda,\bF)$ is the solution to \eqref{E-nonlinear-eigenproblem} in Theorem \ref{T-main}(i), and $\phi:=G*e^{-m}$ again. Taking the difference of the two identities \eqref{E-rate-growth-n-explicit} and \eqref{E-rate-growth-limit-explicit} above, we achieve
\begin{align*}
\left\vert\frac{\Vert F_n\Vert_{L^1}}{\Vert F_{n-1}\Vert_{L^1}}-\blambda\right\vert & \leq \Vert \phi\Vert_{L^\infty} \left\Vert \frac{F_{n-1}}{\Vert F_{n-1}\Vert_{L^1}}\otimes\frac{F_{n-1}}{\Vert F_{n-1}\Vert_{L^1}} -\bF\otimes \bF\right\Vert_{L^1}\\
& \leq \Vert \phi\Vert_{L^\infty}\sqrt{\frac{1}{2}\mathcal{D}_{\rm KL}\left(\left.\frac{F_{n-1}}{\Vert F_{n-1}\Vert_{L^1}}\otimes\frac{F_{n-1}}{\Vert F_{n-1}\Vert_{L^1}}\right\Vert\bF\otimes \bF\right)}\\
& = \Vert \phi\Vert_{L^\infty}\sqrt{\mathcal{D}_{\rm KL}\left(\left.\frac{F_{n-1}}{\Vert F_{n-1}\Vert_{L^1}}\right\Vert\bF\right)}\\
&\leq C_2\left(\frac{2}{1+2\alphalpha}\right)^n,
\end{align*}
with a explicit constant $C_2>0$ taking the form
$$C_2:=\Vert \phi\Vert_{L^\infty}\sqrt{C_1}.$$
Note that above, we have used successively H\"{o}lder's inequality, Pinsker's inequality, the tensorization property of the relative entropy, and \eqref{E-relaxation-normalized-profiles} to reach the conclusion.
\end{proof}


\appendix

\section{Intermediate dualities}\label{A-intermediate-dualities}

For simplicity of the discussion, we do not present here the intermediate Kantorovich-type dualities in the case of non-linear transition semigroups like in \eqref{E-nonlinear-normalized}, but we rather focus on linear semigroups. More specifically, we have the following intermediate result which is reminiscent of the natural interpolation of Kantorovich duality for $L^1$ Wasserstein distance, and Lemma \ref{L-log-estimate} for $L^\infty$ Wassestein metric.

\begin{proposition}\label{P-nonlinear-dualities}
Consider any $\mu,\nu\in \mathcal{P}_p(\mathbb{R}^d)$ for some $1\leq p\leq \infty$, and set any function $u\in C^1(\mathbb{R}^d)$ such that $u>0$ and $\nabla(u^{1/p})\in L^\infty(\mathbb{R}^d,\mathbb{R}^d)$. Then, the following inequality holds true
$$\left |\left(\int_{\mathbb{R}^d}u(x)\,\mu(dx)\right)^{1/p}-\left(\int_{\mathbb{R}^d}u(x)\,\nu(dx)\right)^{1/p}\right|\leq \left\Vert \,\Vert\nabla (u^{1/p})\Vert_{q'}\right\Vert_{L^\infty}\,W_{p,q}(\mu,\nu),$$
for any $1\leq q\leq\infty$. Here, $W_{p,q}$ denotes the $L^p$ Wasserstein distance associated with $\ell_q$ norm of $\mathbb{R}^d$, {\it cf.} \eqref{E-Wpq-distance}, and we admit the convention that $u^{1/\infty}=\log u$ for all $u>0$.
\end{proposition}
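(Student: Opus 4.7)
The plan is to treat the two regimes $1\leq p<\infty$ and $p=\infty$ by a common template. In both cases, I would fix an optimal transference plan $\gamma\in \Gamma(\mu,\nu)$ for the distance $W_{p,q}(\mu,\nu)$, rewrite the integrals $\int u\,d\mu$ and $\int u\,d\nu$ as integrals against $\gamma$ over the first and second coordinate respectively, and exploit the pointwise Lipschitz-type bound
\begin{equation*}
|f(x)-f(y)|\leq \left\Vert\,\Vert \nabla f\Vert_{q'}\right\Vert_{L^\infty}\Vert x-y\Vert_q,\quad x,y\in \mathbb{R}^d,
\end{equation*}
which follows from the mean value theorem and H\"older's inequality for the dual $\ell_q$--$\ell_{q'}$ pairing. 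Here $f$ stands for $u^{1/p}$ when $p<\infty$ and for $\log u$ when $p=\infty$, in both cases a $C^1$ function whose gradient is by hypothesis globally bounded in $\ell_{q'}$.

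For $1\leq p<\infty$, setting $f=u^{1/p}$ yields $(\int u\,d\mu)^{1/p}=\Vert f\circ \pi_1\Vert_{L^p(\gamma)}$ and analogously for $\nu$, where $\pi_1,\pi_2$ denote the projections onto the two coordinates of $\mathbb{R}^d\times \mathbb{R}^d$. Minkowski's triangle inequality in $L^p(\gamma)$, followed by the pointwise estimate above under the integral and the very definition of $W_{p,q}$, would produce directly
\begin{equation*}
\left\vert \Vert f\Vert_{L^p(\mu)}-\Vert f\Vert_{L^p(\nu)}\right\vert \leq \left\Vert\,\Vert \nabla f\Vert_{q'}\right\Vert_{L^\infty}\,W_{p,q}(\mu,\nu).
\end{equation*}
Integrability of $f\circ \pi_i$ under $\gamma$ is ensured by $\mu,\nu\in \mathcal{P}_p(\mathbb{R}^d)$ together with the at most linear growth of $f$ granted by its global Lipschitz bound.

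The case $p=\infty$ would mimic the proof of Lemma \ref{L-log-estimate}. Taking $f=\log u$ and an optimal plan $\gamma$ for $W_{\infty,q}$, one has $\Vert x-y\Vert_q\leq W_{\infty,q}(\mu,\nu)$ for $\gamma$-a.e.\ $(x,y)$, and I would compute
\begin{equation*}
\int u\,d\mu=\iint e^{f(x)-f(y)}e^{f(y)}\,\gamma(dx,dy)\leq \exp\!\left(\left\Vert\,\Vert \nabla f\Vert_{q'}\right\Vert_{L^\infty}W_{\infty,q}(\mu,\nu)\right)\int u\,d\nu.
\end{equation*}
Taking logarithms and then swapping the roles of $\mu$ and $\nu$ produces the bound in the convention $u^{1/\infty}=\log u$.

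The argument amounts to transposing the classical Kantorovich duality to the nonlinear observables $u^{1/p}$ (resp.\ $\log u$), so I do not expect a serious obstacle. The only mild point to verify is the existence of an optimal plan realizing $W_{p,q}(\mu,\nu)$: for $1\leq p<\infty$ this is classical, and for $p=\infty$ it follows from the narrow compactness of $\Gamma(\mu,\nu)$ whenever $W_{\infty,q}(\mu,\nu)<\infty$, which in particular holds when $\mu,\nu\in \mathcal{P}_\infty(\mathbb{R}^d)$.
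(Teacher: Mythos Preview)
Your argument is correct and more elementary than the paper's. For $1\leq p<\infty$ the paper proceeds by taking a constant-speed $W_{p,q}$-geodesic $\rho_t$ from $\mu$ to $\nu$, setting $E(t)=\int u\,d\rho_t$, differentiating via the continuity equation, and applying H\"older twice to obtain $\bigl|\tfrac{d}{dt}E(t)^{1/p}\bigr|\leq \Vert\,\Vert\nabla(u^{1/p})\Vert_{q'}\Vert_{L^\infty}\,W_{p,q}(\mu,\nu)$, which is then integrated on $[0,1]$. Your route bypasses the geodesic machinery entirely: fixing an optimal plan $\gamma$ and invoking the reverse triangle inequality in $L^p(\gamma)$ gives the same bound in one line. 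The paper's approach has the virtue of treating all $1\leq p\leq\infty$ uniformly (the convention $u^{1/\infty}=\log u$ makes the differential inequality read $|\tfrac{d}{dt}\log E(t)|\leq\cdots$), whereas you split off $p=\infty$ and handle it exactly as in Lemma~\ref{L-log-estimate}; but your coupling argument is shorter and avoids any discussion of absolutely continuous curves in Wasserstein space. Both yield the same constant.
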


\begin{proof}
Let us consider any constant-speed geodesic $\rho:[0,1]\longrightarrow \mathcal{P}_p(\mathbb{R}^d)$ in the Wasserstein space $(\mathcal{P}_p(\mathbb{R}^d),W_{p,q})$ joining $\mu$ to $\nu$. Specifically, $\rho$ verifies the continuity equation
\begin{align}\label{E-geodesic-Wpq-1}
\begin{aligned}
&\partial_t\rho+\divop(\rho v)=0,\quad t\in [0,1],\, x\in \mathbb{R}^d,\\
&\rho_0=\mu,\quad \rho_1=\nu,
\end{aligned}
\end{align}
in distributional sense and, in addition, we have
\begin{equation}\label{E-geodesic-Wpq-2}
\Vert \,\Vert v_t\Vert_q\Vert_{L^p(\rho_t)}=W_{p,q}(\mu,\nu),\quad t\in [0,1].
\end{equation}
Let us also define the function
$$E(t):=\int_{\mathbb{R}^d}u(y)\,\rho_t(dy),\quad t\in [0,1].$$
Since $\rho\in {\rm Lip}([0,1],\mathcal{P}_p(\mathbb{R}^d))$, then $E\in AC([0,1])$ and by the continuity equation $\eqref{E-geodesic-Wpq-1}$ we have
\begin{equation}\label{E-equation-derivative-E}
\frac{dE}{dt}(t)=\int_{\mathbb{R}^d}\nabla u(y)\cdot v_t(y)\,\rho_t(dy)=p\int_{\mathbb{R}^d}\nabla(u^{1/p})(y)\cdot v_t(y)\,u^{1/{p'}}(y)\,\rho_t(dy),
\end{equation}
for {\it a.e.} $t\in [0,1]$, where we have used the identity $\nabla u=p\,\nabla (u^{1/p})\,u^{1/{p'}}$.  Therefore, we obtain
\begin{align*}
\left |\frac{dE}{dt}(t)\right |&\leq p\,\int_{\mathbb{R}^d}\Vert \nabla (u^{1/p})(y)\Vert_{q'}\,\Vert v_t(y)\Vert_q \,u^{1/{p'}}(y)\,\rho_t(dy)\\
&\leq p\,\left\Vert \Vert \nabla (u^{1/p})\Vert_{q'}\right\Vert_{L^\infty}\int_{\mathbb{R}^d}\Vert v_t(y)\Vert_q \,u^{1/{p'}}(y)\,\rho_t(dy)\\
&\leq p\,\left\Vert \Vert \nabla (u^{1/p})\Vert_{q'}\right\Vert_{L^\infty}\,\left\Vert \,\Vert v_t\Vert_q\right\Vert_{L^p(\rho_t)}\,\Vert u^{1/p'}\Vert_{L^{p'}(\rho_t)},
\end{align*}
for {\it a.e.} $t\in [0,1]$, where in the first step we have used H\"{o}lder's inequality with exponent $q$ applied to the inner product in the integrand of \eqref{E-equation-derivative-E}, and in the last step we have used H\"{o}lder's inequality with exponent $p$ applied to the integral of the second line. Using the constant-speed condition \eqref{E-geodesic-Wpq-2} in the first factor, and $\Vert u^{1/p'}\Vert_{L^{p'}(\rho_t)}=E(t)^{1/{p'}}$ in the last one, we obtain the relation
$$\left |\frac{dE}{dt}(t)\right |\leq p\,\left\Vert \Vert \nabla (u^{1/p})\Vert_{q'}\right\Vert_{L^\infty}W_{p,q}(\mu,\nu)\,E(t)^{1/p'},$$
for {\it a.e.} $t\in [0,1]$, which amounts to
$$\left |\frac{dE^{1/p}}{dt}(t)\right|  \leq \left\Vert \Vert \nabla (u^{1/p})\Vert_{q'}\right\Vert_{L^\infty}W_{p,q}(\mu,\nu),$$
for {\it a.e.} $t\in [0,1]$. Integrating between $0$ and $1$ implies
$$\left |E(0)^{1/p}-E(1)^{1/p}\right |\leq \left\Vert \Vert \nabla (u^{1/p})\Vert_{q'}\right\Vert_{L^\infty}W_{p,q}(\mu,\nu).$$
Then, noting that $E(0)=\int_{\mathbb{R}^d}u(x)\,\mu(dx)$ and $E(1)=\int_{\mathbb{R}^d}u(x)\,\nu(dx)$ ends the proof.
\end{proof}

As a consequence, we obtain the following result, which allows identifying the Lipschitz constant of a function with the Lipschitz constant of an associated nonlinear functional over $\mathcal{P}_p(\mathbb{R}^d)$.

\begin{corollary}\label{C-nonlinear-dualities}
Consider any $1\leq p\leq \infty$, set any $v\in C^1(\mathbb{R}^d)$ with $\nabla v\in L^\infty(\mathbb{R}^d,\mathbb{R}^d)$, and assume that $v>0$ when $p<\infty$ but not necessarily when $p=\infty$. Define the functional $\Phi_{p,v}:\mathcal{P}_p(\mathbb{R}^d)\longrightarrow \mathbb{R}$ by
$$
\Phi_{p,v}[\mu]:=\left\{
\begin{array}{ll}
\displaystyle\left(\int_{\mathbb{R}^d}v(x)^p\mu(dx)\right)^{1/p}, & \mbox{if }p<\infty,\\
\displaystyle\log\left(\int_{\mathbb{R}^d}e^{v(x)}\mu(dx)\right), & \mbox{if }p=\infty,
\end{array}
\right.
$$
for any $\mu\in \mathcal{P}_p(\mathbb{R}^d)$. Then, for any $1\leq q\leq \infty$ the following identify holds true
$$\Vert \,\Vert\nabla v\Vert_{q'}\Vert_{L^\infty}=\sup_{\mu,\nu\in \mathcal{P}_p(\mathbb{R}^d)}\frac{\Phi_{p,v}[\mu]-\Phi_{p,v}[\nu]}{W_{p,q}(\mu,\nu)}.$$
\end{corollary}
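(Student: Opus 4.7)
The plan is to prove the two inequalities $\leq$ and $\geq$ separately; the first is a direct invocation of Proposition \ref{P-nonlinear-dualities}, while the second is obtained by testing the supremum against pairs of Dirac masses and letting them collapse.

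For the inequality $\sup \leq \|\,\|\nabla v\|_{q'}\|_{L^\infty}$, I would apply Proposition \ref{P-nonlinear-dualities} with a carefully chosen function $u$. In the case $1\leq p<\infty$, the hypothesis $v>0$ allows me to take $u:=v^p$, so that $u\in C^1(\mathbb{R}^d)$, $u>0$, and $u^{1/p}=v$, whence $\nabla(u^{1/p})=\nabla v\in L^\infty$. The proposition immediately yields $|\Phi_{p,v}[\mu]-\Phi_{p,v}[\nu]|\leq \|\,\|\nabla v\|_{q'}\|_{L^\infty}\,W_{p,q}(\mu,\nu)$. In the case $p=\infty$, I would instead take $u:=e^v>0$: then under the convention $u^{1/\infty}=\log u$ we have $u^{1/p}=v$, and again $\nabla(u^{1/p})=\nabla v\in L^\infty$, so Proposition \ref{P-nonlinear-dualities} gives the same Lipschitz bound with respect to $W_{\infty,q}$. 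In both cases, dividing by $W_{p,q}(\mu,\nu)$ and taking the supremum provides the desired upper bound.

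For the converse inequality $\sup\geq \|\,\|\nabla v\|_{q'}\|_{L^\infty}$, I would test the supremum with Dirac pairs $\mu:=\delta_{x_0+te}$ and $\nu:=\delta_{x_0}$, where $x_0,e\in \mathbb{R}^d$ with $e\neq 0$ and $t>0$. These measures lie in $\mathcal{P}_p(\mathbb{R}^d)$ for every $p\in[1,\infty]$ since they have compact support. A direct calculation gives $\Phi_{p,v}[\mu]-\Phi_{p,v}[\nu]=v(x_0+te)-v(x_0)$ in both the $p<\infty$ case (using $v>0$) and the $p=\infty$ case (by the $\log$ convention), and the unique transport plan between two Diracs yields $W_{p,q}(\mu,\nu)=t\,\|e\|_q$. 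Therefore
\begin{equation*}
\frac{\Phi_{p,v}[\mu]-\Phi_{p,v}[\nu]}{W_{p,q}(\mu,\nu)}=\frac{v(x_0+te)-v(x_0)}{t\,\|e\|_q}\xrightarrow[t\to 0^+]{}\frac{\nabla v(x_0)\cdot e}{\|e\|_q}.
\end{equation*}
Optimising over $e\neq 0$ and invoking the $\ell_q$--$\ell_{q'}$ duality (the sharp Hölder inequality on $\mathbb{R}^d$), one obtains $\sup_{e\neq 0}\nabla v(x_0)\cdot e/\|e\|_q=\|\nabla v(x_0)\|_{q'}$. Taking the supremum over $x_0\in\mathbb{R}^d$ then yields $\|\,\|\nabla v\|_{q'}\|_{L^\infty}$ as a lower bound on the supremum, completing the proof.

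There is no genuine obstacle here: once Proposition \ref{P-nonlinear-dualities} is available, the only mild care needed is in handling the $p=\infty$ convention consistently between the functional $\Phi_{\infty,v}$ and the choice $u=e^v$, and in checking that Dirac masses indeed belong to $\mathcal{P}_p(\mathbb{R}^d)$ for every $p$ and realise the claimed $W_{p,q}$ distance. The non-trivial content of the corollary is entirely contained in Proposition \ref{P-nonlinear-dualities}; Corollary \ref{C-nonlinear-dualities} just packages it as a sharp Kantorovich-type duality identity.
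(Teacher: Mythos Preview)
Your proposal is correct and follows essentially the same approach as the paper: the upper bound comes directly from Proposition \ref{P-nonlinear-dualities} via the substitution $u=v^p$ (resp.\ $u=e^v$), and the lower bound is obtained by testing against Dirac masses. The only cosmetic difference is that the paper writes the lower bound in one line as $\sup_{x,x'}\frac{v(x)-v(x')}{\|x-x'\|_q}=\|\,\|\nabla v\|_{q'}\|_{L^\infty}$, whereas you spell out the limit $t\to 0^+$ and the $\ell_q$--$\ell_{q'}$ duality explicitly.
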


\begin{proof}
First, note that the change of variables $v=u^{1/p}$ and Proposition \ref{P-nonlinear-dualities} readily implies
$$\Vert \,\Vert\nabla v\Vert_{q'}\Vert_{L^\infty}\geq \sup_{\mu,\nu\in \mathcal{P}_p(\mathbb{R}^d)}\frac{\Phi_{p,v}[\mu]-\Phi_{p,v}[\nu]}{W_{p,q}(\mu,\nu)}.$$
On the other hand, also note that by particularizing the measures $\mu,\nu\in \mathcal{P}_p(\mathbb{R}^d)$ to be Dirac masses at respective points $x,x'\in\mathbb{R}^d$ we obtain
$$\sup_{\mu,\nu\in \mathcal{P}_p(\mathbb{R}^d)}\frac{\Phi_{p,v}[\mu]-\Phi_{p,v}[\nu]}{W_{p,q}(\mu,\nu)}\geq \sup_{x,x'\in \mathbb{R}^d}\frac{\Phi_{p,v}[\delta_x]-\Phi_{p,v}[\delta_{x'}]}{W_{p,q}(\delta_x,\delta_{x'})}=\sup_{x,x'\in\mathbb{R}^d}\frac{v(x)-v(x')}{\Vert x-x'\Vert_q}=\Vert \,\Vert \nabla v\Vert_{q'}\Vert_{L^\infty}.$$
This proves the converse inequality and then the above identity holds.
\end{proof}

\section{Lower bound of Gaussian convolution of log-concave densities}\label{A-lower-bound-convolution}

We present a technical result which computes an explicit lower bound on the convolution of a Gaussian density and any strongly log-concave probability density.

\begin{lemma}[Lower bound I]\label{L-lower-bound-convolution}
Consider any $f=e^{-V}\in L^1_+(\mathbb{R})\cap \mathcal{P}(\mathbb{R})$, such that $V\in C^1(\mathbb{R})$ with $V'(0)=0$, and $f$ is $\gamma$-log-concave for some $\gamma>0$. Then, we have
\begin{equation}\label{E-lower-bound-convolution}
(G*f)(x_0+\delta)\geq G(2\delta)\,f(x_0-\delta)\,\int_0^{\frac{\gamma}{\gamma+1} x_0-\frac{\delta}{\gamma+1}} \exp\left(\frac{\gamma+1}{2} z^2\right)\,dz,
\end{equation}
for any $\delta>0$ and each $x_0>\frac{\gamma+2}{\gamma}\delta$, where $G$ denotes the standard Gaussian distribution \eqref{E-G}.
\end{lemma}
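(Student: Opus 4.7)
The plan is to establish this bound by restricting the convolution to a carefully chosen window on which the strong log-concavity of $f$ can be exploited via the pointwise inequality $V'(t)\geq \gamma t$ for $t\geq 0$ (a direct consequence of $V'(0)=0$ and $V''\geq \gamma$), and then to reshape the resulting integral by completing the square.

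First I would perform the change of variable $y=x_0-\delta-z$ in $(G\ast f)(x_0+\delta)=\int G(x_0+\delta-y)\,f(y)\,dy$. Using the elementary factorizations
$$G(2\delta+z)=G(2\delta)\,\exp\!\bigl(-2\delta z-\tfrac{z^2}{2}\bigr), \qquad f(x_0-\delta-z)=f(x_0-\delta)\,\exp\!\bigl(V(x_0-\delta)-V(x_0-\delta-z)\bigr),$$
the prefactor $G(2\delta)\,f(x_0-\delta)$ can be pulled out, reducing the claim to a lower bound on the integral of $\exp(-2\delta z-z^2/2+V(x_0-\delta)-V(x_0-\delta-z))$. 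Since this integrand is nonnegative, I would restrict the range to $z\in[0,x_0-\delta]$, which is precisely the window on which the argument $x_0-\delta-z$ of $f$ remains in the monotone branch $[0,x_0-\delta]\subset[0,\infty)$ of $V$.

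Second, I would quantify the gain coming from strong log-concavity. From $V'(t)\geq \gamma t$ for $t\geq 0$, integrating between $x_0-\delta-z$ and $x_0-\delta$ yields the key estimate
$$V(x_0-\delta)-V(x_0-\delta-z)\geq \gamma(x_0-\delta)\,z-\tfrac{\gamma}{2}z^2, \qquad z\in[0,x_0-\delta].$$
Substituting this estimate into the integrand produces the intermediate bound
$$(G\ast f)(x_0+\delta)\geq G(2\delta)\,f(x_0-\delta)\int_0^{x_0-\delta}\exp\!\bigl(az-\tfrac{\gamma+1}{2}z^2\bigr)\,dz,$$
with $a:=\gamma(x_0-\delta)-2\delta$. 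The hypothesis $x_0>\tfrac{\gamma+2}{\gamma}\delta$ is exactly the requirement $a>0$, ensuring the linear term contributes constructively and that the peak of the quadratic exponent is in the positive half-line.

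Third, I would complete the square via $az-\tfrac{\gamma+1}{2}z^2=\tfrac{a^2}{2(\gamma+1)}-\tfrac{\gamma+1}{2}(z-\tfrac{a}{\gamma+1})^2$ and then use the identity $x_0-\delta-\tfrac{a}{\gamma+1}=\tfrac{x_0+\delta}{\gamma+1}$ to motivate the reflection-type substitution $w=\tfrac{a}{\gamma+1}-z$ (or its variant matched to the image endpoint). Keeping track of how the interval $[0,x_0-\delta]$ is transported, the constant $e^{a^2/(2(\gamma+1))}$ coming from the completion of the square is absorbed against the Gaussian factor, and the exponent flips to the desired form $\tfrac{\gamma+1}{2}w^2$ on the target interval $[0,\tfrac{\gamma x_0-\delta}{\gamma+1}]$, thus matching the claim. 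The main obstacle is precisely this last step: the integrand produced by the first two steps concentrates around the interior point $a/(\gamma+1)$, whereas the target integrand is monotone increasing and concentrates at the boundary, so one must carefully choose the substitution and verify that the image of $[0,x_0-\delta]$ truly covers $[0,\tfrac{\gamma x_0-\delta}{\gamma+1}]$ while the Jacobian together with the constant factor correctly delivers the claimed exponent.
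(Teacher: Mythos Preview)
Your first two steps are correct and reduce the problem to bounding $\int_0^{x_0-\delta}\exp\bigl(az-\tfrac{\gamma+1}{2}z^2\bigr)\,dz$ from below, with $a=\gamma x_0-(\gamma+2)\delta>0$. The gap is in the third step. After completing the square the integrand becomes $e^{a^2/(2(\gamma+1))}\exp\bigl(-\tfrac{\gamma+1}{2}(z-\tfrac{a}{\gamma+1})^2\bigr)$, a Gaussian with \emph{negative} quadratic exponent, whereas the target integrand $\exp\bigl(\tfrac{\gamma+1}{2}w^2\bigr)$ has \emph{positive} exponent. No affine substitution can change the sign of the leading coefficient of a quadratic in the exponent, and the prefactor $e^{a^2/(2(\gamma+1))}$ is a constant in the integration variable, so it cannot ``flip'' anything. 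Concretely, take $f=G$ (so $\gamma=1$), $x_0=4$, $\delta=1$: your intermediate integral is $\int_0^3 e^{z-z^2}\,dz\approx 1.73$, while the stated right-hand side is $\int_0^{3/2}e^{w^2}\,dw\approx 4.2$, so the step fails numerically. (In the same example, $(G*f)(5)\approx 5.4\times10^{-4}$ but $G(2)f(3)\int_0^{3/2}e^{w^2}\,dw\approx 1.0\times10^{-3}$; the stated upper limit $\tfrac{\gamma x_0-\delta}{\gamma+1}$ appears to be a misprint for $\tfrac{\gamma x_0-(\gamma+2)\delta}{\gamma+1}$, which is what the paper's argument actually produces and which does pass this test.)

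The paper avoids this obstruction by applying the strong convexity inequality not to $V$ but to the \emph{combined} potential $U(x):=V(x)+\tfrac12(x-x_+)^2$, which is $(\gamma+1)$-convex. Writing $(G*f)(x_+)=(2\pi)^{-1/2}f(x_-)\int_{\mathbb{R}} e^{V(x_-)-U(x)}\,dx$ and using
\[
U(x_-)\ge U(x)+U'(x)(x_--x)+\tfrac{\gamma+1}{2}(x_--x)^2,
\]
one may drop the cross term $U'(x)(x_--x)\ge 0$ on the interval $(x_*,x_-)$, where $x_*$ is the minimizer of $U$ (so $U'>0$ there); the condition $V'(0)=0$ forces $|x_*|\le \tfrac{x_+}{\gamma+1}<x_-$. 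This yields directly $e^{V(x_-)-U(x)}\ge e^{-2\delta^2}\exp\bigl(\tfrac{\gamma+1}{2}(x_--x)^2\bigr)$, a lower bound with the correct \emph{positive} quadratic, and the claim follows by the substitution $z=x_--x$ over $(x_*,x_-)$. The missing idea in your attempt is precisely to fold the Gaussian factor into the potential before invoking convexity, rather than treating $V$ and the Gaussian separately.
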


\begin{proof}
For simplicity of notation, we define $x_{\pm}:=x_0\pm \delta$ and we note that we can write
\begin{equation}\label{E-U-lower-bound-0}
(G*f)(x_+)=\frac{1}{(2\pi)^{1/2}}\,f(x_-)\,\int_\mathbb{R}e^{V(x_-)-U(x)}\,dx,
\end{equation}
where the function $U:\mathbb{R}\longrightarrow\mathbb{R}$ is defined by
$$U(x):=V(x)+\frac{1}{2}(x-x_+)^2,\quad x\in \mathbb{R}.$$
Since the potential $V$ is $\gamma$ convex, then we have that the potential $U$ is $(\gamma+1)$-convex. By the convexity inequality applied to the pair of points $(x,x_-)$ we then obtain
\begin{equation}\label{E-U-lower-bound-1}
U(x_-)\geq U(x)+U'(x)(x_--x)+\frac{\gamma+1}{2}(x_--x)^2,
\end{equation}
for any $x\in \mathbb{R}$. Consider the unique minimizer $x_*\in \mathbb{R}$ of the potential $U$. Since in particular $x_*$ is a critical point of $U$, then we have
$$0=U'(x_*)=V'(x_*)+(x_*-x_+).$$
Multiplying above by $x^*$, using that $V'(0)=0$ by hypothesis along with the convexity inequality of $V$ applied at the pair $(x_*,0)$, we infer $\gamma\,x_*^2\leq  (x_+-x_*)x_*$, and therefore,
\begin{equation}\label{E-x*-x-}
|x_*|\leq \frac{1}{\gamma+1}x_+.
\end{equation}
Since $U'(x)>0$ for $x>x_*$ and $x_--x>0$ for $x<x_-$, then \eqref{E-U-lower-bound-1} implies
$$
U(x_-)\geq U(x)+\frac{\gamma+1}{2}(x_--x)^2,
$$
for any $x\in (x_*,x_-)$. Let us note that indeed we have the appropriate ordering $x_*<x_-$ since by \eqref{E-x*-x-} and the assumption $x_0>\frac{\gamma+2}{\gamma}\delta$ we obtain
$$x_*\leq \frac{1}{\gamma+1}x_+=\frac{1}{\gamma+1}(x_0+\delta)\leq x_0-\delta=x_-.$$
Writing everything in terms of $V$ implies
\begin{equation}\label{E-U-lower-bound-2}
V(x_-)-U(x)\geq -\frac{1}{2}(x_--x_+)^2+\frac{\gamma+1}{2}(x_--x)^2,
\end{equation}
for any $x\in (x_*,x_-)$. Injecting \eqref{E-U-lower-bound-2} into \eqref{E-U-lower-bound-0} we obtain
$$(G*f)(x_+)\geq G(x_+-x_-)\,f(x_-)\,\int_{x_*}^{x_-}\exp\left(\frac{\gamma+1}{2}(x_--x)^2\right)\,dx.$$
Of course, the above implies \eqref{E-lower-bound-convolution} by a simple change of variables $z=x_--x$, and noting again that
$$x_--x_*\geq x_--\frac{1}{\gamma+1}x_+=(x_0-\delta)-\frac{1}{\gamma+1}(x_0+\delta)=\frac{\gamma}{\gamma+1}x_0-\frac{\gamma+2}{\gamma+1}\delta,$$
thanks to \eqref{E-x*-x-}, which yields again positive a positive upper bound by the assumption $x_0>\frac{\gamma+2}{\gamma}\delta$.
\end{proof}

Note that arguing along the same lines, we can prove an analogous result where the above positive strongly log-concave density $f$ is replaced by its truncation $f_R$ to intervals $I_R:=(-R,R)$. Specifically, anything that we need to guarantee is that $[x_*,x_-]\subset I_R$. First, note that $x_-<R$ amounts to the condition $x_0<R+\delta$. Second, by \eqref{E-x*-x-} we obtain that $x_*>-R$ as long as $\frac{1}{\gamma+1}x_+<R$, which amounts to the condition $x_0<(\gamma+1)R-\delta$. If we take $R$ large enough (namely $R>2\delta/\gamma$) then we have that the former condition on $x_0$ is the most restrictive. Therefore, we have the following result. 

\begin{lemma}[Lower bound II]\label{L-lower-bound-convolution-2}
Under the assumptions in Lemma \ref{L-lower-bound-convolution}, let us define
\begin{align*}
\begin{aligned}
f_R(x)&:=e^{-V_R(x)}, && x\in \mathbb{R}\\
V_R(x)&:=V(x)+\chi_{\bar I_R}(x), && x\in \mathbb{R},
\end{aligned}
\end{align*}
for any $R>0$. Then, we have
\begin{equation}\label{E-lower-bound-convolution-2}
(G*f_R)(x_0+\delta)\geq G(2\delta)\,f_R(x_0-\delta)\,\int_0^{\frac{\gamma}{\gamma+1} x_0-\frac{\delta}{\gamma+1}} \exp\left(\frac{\gamma+1}{2} z^2\right)\,dz,
\end{equation}
for any $\delta>0$, each $\frac{\gamma+2}{\gamma}\delta<x_0<R+\delta$, and every $R>\frac{2\delta}{\gamma}$.
\end{lemma}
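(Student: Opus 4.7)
The plan is to follow the proof of Lemma \ref{L-lower-bound-convolution} essentially verbatim, tracking carefully the effect of the support restriction imposed by $\chi_{\bar I_R}$. Setting $x_\pm := x_0 \pm \delta$ and $U(x) := V(x) + \frac{1}{2}(x-x_+)^2$ exactly as in the original proof, I would first rewrite
$$(G * f_R)(x_+) = \frac{1}{(2\pi)^{1/2}}\, f_R(x_-) \int_{\bar I_R} e^{V(x_-) - U(x)} \, dx,$$
where the integral is now restricted to $\bar I_R$ by the truncation. This expression is well defined since the hypothesis $\frac{\gamma+2}{\gamma}\delta < x_0 < R + \delta$ forces $x_- = x_0 - \delta \in \bar I_R$. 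The potential $U$ remains $(\gamma+1)$-convex on $\mathbb{R}$ with the same unique minimizer $x_* \in \mathbb{R}$ satisfying $|x_*| \leq \frac{1}{\gamma+1} x_+$ as in \eqref{E-x*-x-}, and the same convexity inequality gives, for $x \in (x_*, x_-)$,
$$V(x_-) - U(x) \geq -\frac{1}{2}(x_- - x_+)^2 + \frac{\gamma+1}{2}(x_- - x)^2.$$

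The crux is to verify that the subinterval $(x_*, x_-)$ is contained in $\bar I_R$, so that restricting the integration to this subinterval (where the integrand is manifestly non-negative) incurs no loss relative to the untruncated situation. The inclusion $x_- < R$ is precisely the assumption $x_0 < R + \delta$; the inclusion $x_* > -R$ is guaranteed by $|x_*| \leq \frac{1}{\gamma+1} x_+ < R$, which rearranges to $x_0 < (\gamma+1) R - \delta$. The threshold $R > 2\delta/\gamma$ is exactly what makes $(\gamma+1) R - \delta > R + \delta$, so that the binding constraint $x_0 < R + \delta$ automatically implies the second inclusion and no extra condition on $x_0$ is needed.

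With $(x_*, x_-) \subset \bar I_R$ in hand, the rest of the argument is a direct transcription of the untruncated case: integrating the pointwise bound above over the subinterval $(x_*, x_-)$, discarding the non-negative contribution on $\bar I_R \setminus (x_*, x_-)$, and performing the change of variables $z = x_- - x$ would yield \eqref{E-lower-bound-convolution-2}, using once more the bound $x_- - x_* \geq \frac{\gamma}{\gamma+1} x_0 - \frac{\delta}{\gamma+1}$ (modulo the same simple algebra as in Lemma \ref{L-lower-bound-convolution}) to identify the upper limit of integration. The only genuine obstacle is the bookkeeping of the constraints on $R$ and $x_0$ that keep both endpoints $x_*$ and $x_-$ inside $\bar I_R$; once the threshold $R > 2\delta/\gamma$ is isolated, no further technical difficulty arises.
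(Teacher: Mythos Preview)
Your proposal is correct and matches the paper's own argument essentially verbatim: the paper also observes that the proof of Lemma~\ref{L-lower-bound-convolution} carries over once one checks $[x_*,x_-]\subset I_R$, and it isolates the same two constraints $x_0<R+\delta$ and $x_0<(\gamma+1)R-\delta$, the second being redundant precisely when $R>2\delta/\gamma$.
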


\section{Lemmas \ref{L-properties-maximizers-l1} and \ref{L-maximum-principle-l1} for $\ell_1$ balls}\label{A-also-l1-balls}

\begin{lemma}
Lemmas \ref{L-properties-maximizers-l1} and \ref{L-maximum-principle-l1} remain true with $\ell_\infty$ balls replaced by $\ell_1$ balls.
\end{lemma}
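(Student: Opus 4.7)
The plan is to revisit the proofs of Lemmas~\ref{L-properties-maximizers-l1} and~\ref{L-maximum-principle-l1} and verify that each step survives the replacement of $\bar Q_R$ by the $\ell_1$ ball $\bar B_R^1 := \{z \in \mathbb{R}^2 : \Vert z\Vert_1 \leq R\}$. The set $B_R^1$ is a convex polygon with four right-angled vertices at $(\pm R, 0)$ and $(0, \pm R)$ and four smooth edges $S^{\epsilon_1,\epsilon_2} := \{z\in \partial B_R^1 : \epsilon_1 z_1 + \epsilon_2 z_2 = R\}$ indexed by $(\epsilon_1,\epsilon_2) \in \{-1,+1\}^2$. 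Because $B_R^1$ is the image of $Q_{R/\sqrt 2}$ under the $45^\circ$ rotation and the $L^2$-based Brenier theory is invariant under rigid motions, the up-to-boundary regularity of \cite[Theorem 3.3]{J-19} transfers from $Q_{R/\sqrt 2}$ to $B_R^1$: the Brenier potential $\phi$ from $f$ to $g$ lies in $C^{2,\delta}(\bar B_R^1)$, the map $T = \nabla \phi$ is a diffeomorphism of $\bar B_R^1$ onto itself, it fixes the four vertices, and it sends each edge onto itself.

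Next I would read off the boundary identities on $\psi := \phi - \tfrac 12 \Vert \cdot \Vert_2^2$. The invariance $T(S^{\epsilon_1,\epsilon_2}) = S^{\epsilon_1,\epsilon_2}$ gives
\[
\epsilon_1 \partial_{x_1} \psi(z) + \epsilon_2 \partial_{x_2} \psi(z) = 0, \qquad z \in S^{\epsilon_1,\epsilon_2},
\]
and differentiating along the tangent direction $(\epsilon_2,-\epsilon_1)$ makes the cross-term $\partial_{x_1 x_2}\psi$ cancel, leaving
\[
\partial_{x_1 x_1}\psi(z) - \partial_{x_2 x_2}\psi(z) = 0, \qquad z \in S^{\epsilon_1,\epsilon_2}.
\]
The analogue on $\bar B_R^1$ of~\eqref{E-transport-boundary-crossed-derivative} is therefore the vanishing of the \emph{difference of diagonal} Hessian entries, not of the off-diagonal one.

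With these in hand, the analogue of Lemma~\ref{L-properties-maximizers-l1} proceeds as follows. A corner maximizer $z^*$ gives $T(z^*) = z^*$, hence $H(z^*) = 0$, and the maximality of $z^*$ then forces $H \equiv 0$, contradicting $x \neq \tilde x$. So $z^*$ sits in the interior of some edge $S^{\epsilon_1,\epsilon_2}$. Setting $s_k := \sgn(\partial_{x_k}\psi(z^*))$ and ruling out the subcase where both partials vanish (same contradiction), the boundary condition enforces $s_1 s_2 = -\epsilon_1 \epsilon_2$. Writing $a := \partial_{x_1 x_1}\psi(z^*) = \partial_{x_2 x_2}\psi(z^*)$ and $b := \partial_{x_1 x_2}\psi(z^*)$, a direct computation yields
\[
\nabla \tilde H(z^*) = \bigl( s_1 a + s_2 b, \; s_2 a + s_1 b\bigr),
\]
whose two components are forced by KKT to be $\mu \epsilon_1$ and $\mu \epsilon_2$ for some $\mu \geq 0$. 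Summing and subtracting these two scalar equations and splitting on whether $\epsilon_1 = \epsilon_2$ (case $s_1 + s_2 = 0$) or $\epsilon_1 = -\epsilon_2$ (case $s_1 - s_2 = 0$) forces $\mu = 0$ in both cases, hence $\nabla \tilde H(z^*) = 0$. The second-order condition $D^2 \tilde H(z^*) \leq 0$ then follows from the same evenness-of-quadratic-forms argument used in the proof of Lemma~\ref{L-properties-maximizers-l1}: inward directions and their opposites together cover $\mathbb{R}^2$, and $v \mapsto v^\top D^2 \tilde H(z^*) v$ is invariant under $v \mapsto -v$.

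Once these optimality conditions at $z^*$ are secured, the proof of the $\ell_1$-ball version of Lemma~\ref{L-maximum-principle-l1} is immediate: the four-sign-case analysis of the formal proof of Lemma~\ref{L-contraction-transition-probability} uses only the identity \eqref{E-Monge-Ampere-log-derivative}, the optimality conditions at $z^*$, the positive-definiteness of $(D^2\phi(z^*))^{-1}$ and the $\gamma$-convexity of $V$, and none of those steps are sensitive to whether the support is $\bar Q_R$ or $\bar B_R^1$. The main obstacle in the plan is the up-to-corner $C^{2,\delta}$ regularity, since~\cite{J-19} is explicitly written for $\bar Q_R$; however the corners of $\bar B_R^1$ are also right-angled, so Jhaveri's corner reflection construction applies verbatim, and the alternative rotation-based reduction to $Q_{R/\sqrt 2}$ provides a second route to the same conclusion.
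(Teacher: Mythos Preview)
Your proposal is correct and follows essentially the same strategy as the paper: both arguments rely on the up-to-boundary $C^{2,\delta}$ regularity on the $\ell_1$ ball (inherited from the $\ell_\infty$ case by rotation), the edge-invariance constraints $\epsilon_1\partial_{x_1}\psi+\epsilon_2\partial_{x_2}\psi=0$, and a KKT argument at the boundary maximizer. The only organisational difference is that you differentiate the edge constraint tangentially once to obtain the unified second-order identity $\partial_{x_1x_1}\psi=\partial_{x_2x_2}\psi$ on every edge and then run a single KKT computation indexed by the sign relation $s_1s_2=-\epsilon_1\epsilon_2$, whereas the paper splits directly into the four sign cases $(s_1,s_2)\in\{\pm1\}^2$, rules out the two incompatible edges in each case, and then couples the global KKT condition for $\tilde H$ with a second Lagrange-multiplier relation coming from the restricted form $\tilde H|_{\text{edge}}=\pm 2\partial_{x_k}\psi$; the two linear systems are equivalent (the paper's second set of equations is exactly your tangential derivative written in multiplier form), so the computations coincide.
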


Since the version of Lemma \ref{L-maximum-principle-l1} over $\ell_1$ balls follows the same train of thoughts as the original version for $\ell_\infty$ balls, then we just focus on proving the alternative version of Lemma \ref{L-properties-maximizers-l1} over $\ell_1$ balls. Namely, in the original argument we shall rather set $Q_R=\{z\in \mathbb{R}^2:\,\Vert z\Vert_1<R\}$.

\begin{proof}
We remark that $z^*\in \bar Q_R$ must also be a maximizer of $\tilde H$ since we have
$$\tilde H(z)\leq H(z)\leq H(z^*)=\tilde H(z^*),$$
for every $z^*\in \bar Q_R$ by the definition of $H$ and $\tilde H$ in \eqref{E-displacement-H-local} and \eqref{E-displacement-H-auxiliary-local}. Since the maximizer $z^*$ may lie in principle in all $\bar Q_R$, two possible options arise, either $z^*\in Q_R$ or $z^*\in \partial Q_R$. In the first case, the usual optimality conditions at interior points yield \eqref{E-optimality-conditions-H-tilde-local}. In the second case, namely $z^*\in \partial Q_R$, note that the result is trivial if $z^*$ is one of the four corners since those are fixed points of $T$ and therefore $\tilde{H}\equiv 0$. Hence, here on we will assume that $z^*\in \partial Q_R$ is not at a corner, but it lies in the interior of some of the four segments. Note that at those points we only have to prove that $\nabla\tilde{H}(z^*)=0$. In fact, we remark that those $z^*$ can be approached by interior points from any direction, and then the above readily implies the second order optimality condition $D^2 \tilde{H}(z^*)\leq 0$. To prove that $\nabla\tilde{H}(z^*)=0$, note that the boundary $\partial Q_R$ contains four segments:
\begin{align*}
S_{++}&:=\{(x_1,x_2)\in \mathbb{R}^2:\,x_1\geq 0,\,x_2\geq 0,\,x_1+x_2=R\},\\
S_{--}&:=\{(x_1,x_2)\in \mathbb{R}^2:\,x_1\leq 0,\,x_2\leq  0,\,-x_1-x_2=R\},\\
S_{+-}&:=\{(x_1,x_2)\in \mathbb{R}^2:\,x_1\geq 0,\,x_2\leq  0,\,x_1-x_2=R\},\\
S_{-+}&:=\{(x_1,x_2)\in \mathbb{R}^2:\,x_1\leq 0,\,x_2\geq  0,\,-x_1+x_2=R\}.
\end{align*}
Since $T(\partial Q_R)=\partial Q_R$ and each segment is mapped to itself, then we have the following information
\begin{align}
\partial_{x_1}\psi(z)+\partial_{x_2}\psi(z)&=0,\qquad \mbox{if }z\in S_{++}\cup S_{--},\label{E-transport-segments-1-bis}\\
\partial_{x_1}\psi(z)-\partial_{x_2}\psi(z)&=0,\qquad \mbox{if }z\in S_{+-}\cup S_{-+}.\label{E-transport-segments-2-bis}
\end{align}
Now, we argue according to the four possible choices for the signs of $\partial_{x_1}\psi(z^*)$ and $\partial_{x_2}\psi(z^*)$. 

\medskip

$\diamond$ {\sc Case 1}: $\partial_{x_1}\psi(z^*)\geq 0$ and $\partial_{x_2}\psi(z^*)\geq 0$.\\
In this case we have $\tilde{H}=\partial_{x_1}\psi+\partial_{x_2}\psi$. Note that it is not possible that $z^*\in S_{++}\cup S_{--}$ because otherwise we would have $H(z^*)=\tilde{H}(z^*)=0$ by \eqref{E-transport-segments-1-bis} and this of course implies that $T(z)=z$ for all $z\in \bar Q_R$, that is, $x=\tilde x$. Then we must have necessarily $z^*\in S_{+-}\cup S_{-+}$. Assume that $z^*\in S_{+-}$ (the case $z^*\in S_{-+}$ can be handled similarly). Then, by \eqref{E-transport-segments-2-bis} we obtain
\begin{align*}
\begin{aligned}
\tilde{H}(z)&=\partial_{x_1}\psi(z)+\partial_{x_2}\psi(z), && z\in \bar Q_R,\\
\tilde{H}(z)&=2\partial_{x_1}\psi(z), && z\in S_{+-}.
\end{aligned}
\end{align*}
Since $z^*$ is a maximizer of both functions, then there exist $\lambda\geq 0$ and $\mu\in \mathbb{R}$ such that the gradient of each function at $z^*$ equals the multiples $\lambda (1,-1)$ and $\mu (1,-1)$ of the outer normal vector:
\begin{align*}
\begin{aligned}
&\partial_{x_1x_1}\psi(z^*)+\partial_{x_1x_2}\psi(z^*)=\lambda, && 2\partial_{x_1x_1}\psi(z^*)=\mu,\\
&\partial_{x_1x_2}\psi(z^*)+\partial_{x_2x_2}\psi(z^*)=-\lambda, && 2\partial_{x_1x_2}\psi(z^*)=-\mu.
\end{aligned}
\end{align*}
Then, we deduce $\lambda=0$ and therefore $\nabla \tilde{H}(z^*)=0$.

\medskip

$\diamond$ {\sc Case 2}: $\partial_{x_1}\psi(z^*)<0$ and $\partial_{x_2}\psi(z^*)<0$.\\
This case follows exactly the same argument as {\sc Case 1} with $\tilde{H}$ replaced by $\tilde{H}=-\partial_{x_1}\psi-\partial_{x_2}\psi$ and then we omit the proof.

\medskip

$\diamond$ {\sc Case 3}: $\partial_{x_1}\psi(z^*)\geq 0$ and $\partial_{x_2}\psi(z^*)<0$.\\
Now we have $\tilde{H}=\partial_{x_1}\psi-\partial_{x_2}\psi$. In this case it is not possible that $z^*\in S_{+-}\cup S_{-+}$ because otherwise we would have $H(z^*)=\tilde{H}(z^*)=0$ by \eqref{E-transport-segments-2-bis}. Then we must have necessarily $z^*\in S_{++}\cup S_{--}$. Assume that $z^*\in S_{++}$ (the case $z^*\in S_{--}$ can be handled similarly). Then by \eqref{E-transport-segments-1-bis} we have
\begin{align*}
\begin{aligned}
\tilde{H}(z)&=\partial_{x_1}\psi(z)-\partial_{x_2}\psi(z), && z\in \bar Q_R,\\
\tilde{H}(z)&=2\partial_{x_1}\psi(z), && z\in S_{++}.
\end{aligned}
\end{align*}
Since $z^*$ is a maximizer of both functions, then there exists $\lambda\geq 0$ and $\mu\in \mathbb{R}$ such that the gradient of each function at $z^*$ equals the multiples $\lambda(1,1)$ and $\mu(1,1)$ of the outer normal vector:
\begin{align*}
\begin{aligned}
&\partial_{x_1x_1}\psi(z^*)-\partial_{x_1x_2}\psi(z^*)=\lambda, && 2\partial_{x_1x_1}\psi(z^*)=\mu,\\
&\partial_{x_1x_2}\psi(z^*)-\partial_{x_2x_2}\psi(z^*)=\lambda, && 2\partial_{x_1x_2}\psi(z^*)=\mu.
\end{aligned}
\end{align*}
Again we deduce $\lambda=0$ and therefore $\nabla \tilde{H}(z^*)=0$.

\medskip

$\diamond$ {\sc Case 4}: $\partial_{x_1}\psi(z^*)<0$ and $\partial_{x_2}\psi(z^*)\geq 0$.\\
This case follows exactly the same argument as {\sc Case 3} with $\tilde{H}$ replaced by $\tilde{H}=-\partial_{x_1}\psi+\partial_{x_2}\psi$ and then we omit the proof.
\end{proof}

\bibliographystyle{amsplain} 
\bibliography{biblio}

\providecommand{\bysame}{\leavevmode\hbox to3em{\hrulefill}\thinspace}
\providecommand{\MR}{\relax\ifhmode\unskip\space\fi MR }
\providecommand{\MRhref}[2]{%
  \href{http://www.ams.org/mathscinet-getitem?mr=#1}{#2}
}
\providecommand{\href}[2]{#2}
\begin{thebibliography}{10}

\bibitem{AGS-08}
L.~Ambrosio, N.~Gigli, and G.~Savar\'e, \emph{Gradient flows in metric spaces
  and in the space of probability measures}, Birkh{\"a}user, Basel, 2008.

\bibitem{AMTU-01}
A.~Arnold, P.~Markowich, G.~Toscani, and A.~Unterreiter, \emph{On {Convex}
  {Sobolev} {Inequalities} and the {Rate} of {Convergence} to {Equilibrium} for
  {Fokker}-{Planck} {Type} {Equations}}, Commun. Partial. Differ. Equ.
  \textbf{8} (26), no.~1-2, 43--100.

\bibitem{B-94}
D.~Bakry, \emph{L'hypercontractivité et son utilisation en théorie des
  semigroupes}, Lectures on {Probability} {Theory}: {Ecole} d'{Eté} de
  {Probabilités} de {Saint}-{Flour} {XXII}-1992 (P.~Bernard, ed.), Lecture
  {Notes} in {Mathematics}, vol. 1581, Springer, Berlin, Heidelberg, 1994,
  pp.~1--114.

\bibitem{BGL-14}
D.~Bakry, I.~Gentil, and M.~Ledoux, \emph{Analysis and {G}eometry of {M}arkov
  {D}iffusion {O}perators}, A {S}eries of {C}omprehensive {S}tudies in
  {M}athematics, vol. 348, Springer, Cham, 2014.

\bibitem{BMP-09}
G.~Barles, S.~Mirrahimi, and B.~Perthame, \emph{Concentration in
  {Lotka}-{Volterra} parabolic or integral equations: a general convergence
  result}, Methods and Applications of Analysis \textbf{16} (2009), no.~3,
  321--340.

\bibitem{BEV-17}
N.~H. Barton, A.~M. Etheridge, and A.~V{\'e}ber, \emph{The infinitesimal model:
  {D}efinition, derivation, and implications}, Theoret. Population Biol.
  \textbf{118} (2017), 50--73.

\bibitem{BCV-16}
H.~Berestycki, J.~Coville, and H.-H. Vo, \emph{Persistence criteria for
  populations with non-local dispersion}, J. Math. Biol. \textbf{72} (2016),
  1693--1745.

\bibitem{BL-76}
H.~J. Brascamp and E.~H. Lieb, \emph{On extensions of the {B}runn-{M}inkowski
  and {P}r{\'e}kopa-{L}eindler theorems, including inequalities for log concave
  functions, and with an application to the diffusion equation}, J. Funct.
  Anal. \textbf{22} (1991), 366--389.

\bibitem{B-91}
Y~Brenier, \emph{Polar factorization and monotone rearrangement of
  vector-valued functions}, Comm. Pure Appl. Math. \textbf{44} (1991), no.~4,
  375--417.

\bibitem{C-92-2}
L.~A. Caffarelli, \emph{Boundary regularity of maps with convex potentials},
  Comm. Pure Appl. Math. \textbf{45} (1992), no.~9, 1141--1151.

\bibitem{C-92-1}
\bysame, \emph{The regularity of mappings with a convex potential}, J. Amer.
  Math. Soc. \textbf{5} (1992), no.~1, 99--104.

\bibitem{C-96}
\bysame, \emph{Boundary regularity of maps with convex potentials {II}}, Comm.
  Pure Appl. Math. \textbf{45} (1996), no.~9, 1141--1151.

\bibitem{C-00}
\bysame, \emph{Monotonicity properties of optimal transportation and the {FKG}
  and related inequalities}, Comm. Math. Phys. \textbf{214} (2000), no.~3,
  547--563.

\bibitem{C-02}
\bysame, \emph{Erratum: ``{M}onotonicity properties of optimal transportation
  and the {FKG} and related inequalities'' [{C}omm. {M}ath. {P}hys. 214 (2000),
  no. 3, 547-563]}, Comm. Math. Phys. \textbf{225} (2002), no.~2, 449--450.

\bibitem{CGP-19}
V.~Calvez, J.~Garnier, and F.~Patout, \emph{Asymptotic analysis of a
  quantitative genetics model with nonlinear integral operator}, J. \'Ec.
  polytech. Math. \textbf{6} (2019), 537--579.

\bibitem{CLP-21-arxiv}
V.~Calvez, L.~Lepoutre, and D.~Poyato, \emph{Ergodicity of the {F}isher
  infinitesimal model with quadratic selection}, 2021, arXiv:2107.00383.

\bibitem{CF-21}
M.~Colombo and M.~Fathi, \emph{Bounds on optimal transport maps onto
  log-concave measures}, J. Differ. Equ. \textbf{271} (2021), 1007--1022.

\bibitem{CFJ-17}
M.~Colombo, A.~Figalli, and Y.~Jhaveri, \emph{Lipschitz changes of variables
  between perturbations of log-concave measures}, Ann. Sc. Norm. Super. Pisa
  Cl. Sci. \textbf{17} (2017), 1491--1519.

\bibitem{CLW-17}
J.~Coville, F.~Li, and X.~Wang, \emph{On eigenvalue problems arising from
  nonlocal diffusion models}, Discrete Contin. Dyn. Syst. Ser. A \textbf{37}
  (2017), no.~2, 879--903.

\bibitem{DJMP-05}
O.~Diekmann, P.-E. Jabin, S.~Mischler, and B.~Perthame, \emph{The dynamics of
  adaptation: {A}n illuminating example and a {H}amilton–{J}acobi approach},
  Theor. Popul. Biol. \textbf{67} (2005), no.~4, 257--271.

\bibitem{FS-21}
V.~Ferrari and F.~Santambrogio, \emph{Lipschitz estimates on the {JKO} scheme
  for the {F}okker--{P}lanck equation on bounded convex domains}, Appl. Math.
  Lett. \textbf{112} (2021), 106806.

\bibitem{F-18}
R.~A. Fisher, \emph{The correlation between relatives on the supposition of
  mendelian inheritance}, Trans. Roy. Soc. Edinburgh \textbf{52} (1918),
  399--433.

\bibitem{F-22}
\bysame, \emph{On the mathematical foundations of theoretical statistics},
  Philos. Trans. Royal Soc. A \textbf{222} (1922), no.~594-604, 309--368.

\bibitem{GCBBLRC-22-arxiv}
J.~Garnier, O.~Cotto, T.~Bourgeron, E.~Bouin, T.~Lepoutre, O.~Ronce, and
  V.~Calvez, \emph{Adaptation to a changing environment: what me normal?},
  2022, arXiv:2206.13248.

\bibitem{J-19}
Y.~Jhaveri, \emph{On the (in)stability of the identity map in optimal
  transportation}, Calc. Var. Partial Differ. Equ. \textbf{58} (2019), 96.

\bibitem{M-07}
R.~Mahadevan, \emph{A note on a non-linear {K}rein-{R}utman theorem}, Nonlinear
  Anal. Theory Methods Appl. \textbf{67} (2007), no.~11, 3084--3090.

\bibitem{MR-13}
S.~Mirrahimi and G.~Raoul, \emph{Dynamics of sexual populations structured by a
  space variable and a phenotypical trait}, Theoret. Population Biol.
  \textbf{84} (2013), 87--103.

\bibitem{N-88}
R.~D. Nussbaum, \emph{Hilbert's {Projective} {Metric} and {Iterated}
  {Nonlinear} {Maps}}, American Mathematical Society, Basel, 1099.

\bibitem{N-94}
\bysame, \emph{Finsler structures for the part metric and {Hilbert}'s
  projective metric and applications to ordinary differential equations},
  Differ. Integral Equ. \textbf{7} (1994), no.~5-6, 1649--1707.

\bibitem{P-20-arxiv}
F.~Patout, \emph{The {C}auchy problem for the infinitesimal model in the regime
  of small variance}, 2020, arXiv:2001.04682.

\bibitem{PB-08}
B.~Perthame and G.~Barles, \emph{Dirac concentrations in {Lotka}-{Volterra}
  parabolic {PDEs}}, Indiana Univ. Math. J. \textbf{57} (2008), no.~7,
  3275--3301.

\bibitem{R-17-arxiv}
G.~Raoul, \emph{Macroscopic limit from a structured population model to the
  {K}irkpatrick-{B}arton model}, 2017, arXiv:1706.04094.

\bibitem{SW-14}
A.~Saumard and J.~A. Wellner, \emph{Log-concavity and strong log-concavity: {A}
  review}, Statist. Surv. \textbf{8} (2014), no.~45, 45--114.

\bibitem{S-05}
S.~Stigler, \emph{Fisher in 1921}, Stat. Sci. \textbf{20} (2005), no.~1,
  32--49.

\bibitem{V-03}
C.~Villani, \emph{Topics in optimal transportatio}, American Mathematical
  Society, Providence, RI, 2003.

\end{thebibliography}

\end{document}